\def\Aut{\operatorname{Aut}}
\def\id{\operatorname{id}}
\def\Ad{\operatorname{Ad}}
\def\max{\operatorname{max}}
\def\id{\operatorname{id}}
\def\C{\mathbb{C}}
\def\R{\mathbb{R}}
\def\N{\mathbb{N}}
\def\Z{\mathbb{Z}}
\newcommand{\IC}[0]{\mathbb{C}}
 \newcommand{\IN}[0]{\mathbb{N}}
 \newcommand{\IP}[0]{\mathbb{P}}
 \newcommand{\IR}[0]{\mathbb{R}}
 \newcommand{\IT}[0]{\mathbb{T}}
 \newcommand{\IZ}[0]{\mathbb{Z}}
\newcommand{\CA}[0]{\mathcal{A}} 
 \renewcommand{\CD}[0]{\mathcal{D}}
\newcommand{\CE}[0]{\mathcal{E}} \newcommand{\CF}[0]{\mathcal{F}}
 \newcommand{\CH}[0]{\mathcal{H}}
 \newcommand{\CL}[0]{\mathcal{L}}
 \newcommand{\CN}[0]{\mathcal{N}}
\newcommand{\CO}[0]{\mathcal{O}} 
\newcommand{\CQ}[0]{\mathcal{Q}} 
 \newcommand{\CT}[0]{\mathcal{T}}
\newcommand{\CU}[0]{\mathcal{U}} 
 \newcommand{\CZ}[0]{\mathcal{Z}}
\newcommand{\FA}[0]{\mathfrak{A}} \newcommand{\FB}[0]{\mathfrak{B}}
\def\gxp{G \rtimes_\theta P}
\def\gpt{G,P,\theta}
\newtheorem{thm}{Theorem}[section]
\newtheorem{corollary}[thm]{Corollary}
\newtheorem{lemma}[thm]{Lemma}
\newtheorem{proposition}[thm]{Proposition}
\theoremstyle{definition}
\newtheorem{definition}[thm]{Definition}
\theoremstyle{remark}
\newtheorem{remark}[thm]{Remark}
\newtheorem{example}[thm]{Example}
\numberwithin{equation}{section}
\begin{document}

\title[The inner structure of boundary quotients of right LCM semigroups]{The inner structure of boundary quotients of right LCM semigroups}

\author[V.~Aiello]{Valeriano Aiello}
\address{%Department of Mathematics, Vanderbilt University, 1362 Stevenson Center, Nashville, TN 37240, USA.
Section de Math\'  ematiques, Universit\' e de Gen\` eve, 2-4 rue du Li\` evre, Case Postale 64,
1211 Gen\` eve 4, Switzerland}
\email{valerianoaiello@gmail.com}
%
%\author[V.~Aiello]{Valeriano Aiello}
%\address{Dipartimento di Matematica e Fisica \\ Universit\`{a} Roma Tre \\ Italy}
%\email{valerianoaiello@gmail.com}

\author[R.~Conti]{Roberto Conti}
\address{Dipartimento di Scienze di Base e Applicate per l'Ingegneria \\ Sapienza Universit\`{a} di Roma \\ Italy}
\email{roberto.conti@sbai.uniroma1.it}

\author[S.~Rossi]{Stefano Rossi}
\address{Dipartimento di Matematica \\ Universit\`{a} di Roma Tor Vergata \\ Italy}
\email{rossis@mat.uniroma2.it}

\author[N.~Stammeier]{Nicolai Stammeier}
\address{Department of Mathematics \\ University of Oslo \\ P.O. Box 1053 \\ Blindern \\ NO-0316 Oslo \\ Norway}
\email{nicolsta@math.uio.no}

\begin{abstract}
We study distinguished subalgebras and automorphisms of boundary quotients arising from algebraic dynamical systems $(\gpt)$. Our work includes a complete solution to the problem of extending Bogolubov automorphisms from the Cuntz algebra in $2 \leq p<\infty$ generators to the $p$-adic ring $C^*$-algebra. For the case where $P$ is abelian and $C^*(G)$ is a maximal abelian subalgebra, we establish a picture for the automorphisms of the boundary quotient that fix $C^*(G)$ pointwise. This allows us to show that they form a maximal abelian subgroup of the entire automorphism group. The picture also leads to the surprising outcome that, for integral dynamics, every automorphism that fixes one of the natural Cuntz subalgebras pointwise is necessarily a gauge automorphism. Many of the automorphisms we consider are shown to be outer.

%We study distinguished subalgebras as well as certain natural classes of endomorphisms and automorphisms of boundary quotients $\CQ(\gxp)$ arising from algebraic dynamical systems. Closer attention is given to two guiding examples: integral dynamics and exact injective group endomorphisms of discrete abelian groups with finite cokernel. Our work includes a complete solution to the problem of extending Bogolubov automorphisms from $\CO_p$ to the $p$-adic ring $C^*$-algebra. In the situation where $P$ is abelian and $C^*(G)$ is maximal abelian in $\CQ(\gxp)$, we establish a picture for the automorphisms of $\CQ(\gxp)$ that fix $C^*( G)$ pointwise, which allows us to show that these form a maximal abelian subgroup of the automorphism group of $\CQ(\gxp)$. In the case of integral dynamics, this picture leads to the surprising outcome that every automorphism of $\CQ(\IZ\rtimes P)$ that fixes one of the natural Cuntz subalgebras inside $\CQ(\IZ\rtimes P)$ pointwise is necessarily a gauge automorphism. In addition, we prove that most automorphisms appearing in our study are outer.

\end{abstract}

\date{\today}
\maketitle
{\hypersetup{linkcolor=black}\tableofcontents}

\section{Introduction}
Introduced as long ago as the late 1970s, \cite{Cun}, the Cuntz algebras have since been an undeniably interesting field of research. A particularly fascinating area within this field is the structure of the endomorphisms and the automorphisms of Cuntz algebras, which provide fertile grounds for a deep interplay between $C^*$-algebra theory, ergodic theory, and combinatorics. Over the last decade, remarkable progress has been made towards a better understanding of certain key features and challenging problems, see \cites{Conti, CRS, ConSz, CHS1, CHS2, CHS3, CHS4, CHS5} for a brief selection.

Inspired by \cite{Cun}, a wealth of constructions of $C^*$-algebras associated with various sorts of input data has been constructed. It is thus quite natural to ask to what extent the endomorphism structure of these $C^*$-algebras resembles the case of the original Cuntz algebras. For instance, through the works \cites{CuntzQ,CL,CLintegral2,Li0,Li1,CDL13,BRRW} there are classes of $C^*$-algebras that are associated with purely algebraic objects, such as rings, integral domains, fields, and arbitrary left cancellative semigroups. Especially the study of the $C^*$-algebras constructed in \cite{Li1} has led to an extensive list of impressive results, from which we would like at least to mention \cites{Li2,CEL,CEL2,ELR,ABLS}, and refer the reader to \cite{Cuntz-review} for a recent survey as well as to \cite{CELY} for a more detailed exposition with an emphasis on K-theory.

In \cite{LarsenLi}, Larsen and Li performed a detailed case study on the representation theory of one particular $C^*$-algebra of the former type: the $2$-adic ring $C^*$-algebra $\CQ_2$. This algebra is the universal $C^*$-algebra generated by a unitary $u$ and an isometry $s$ subject to $su=u^2s$ and $ss^* +uss^*u^* = 1$, which can also be described as the ring $C^*$-algebra associated to $\Z \rtimes \langle 2 \rangle \subset \Z \rtimes \Z^\times$. One reason for choosing $\CQ_2$ is that the algebra is a balanced version of $\CO_2$ in the sense that it is a unital UCT Kirchberg algebra whose K-groups are both equal to $\Z$. In addition, $\CO_2$ appears quite naturally as the subalgebra $C^*(\{s,us\})$ of $\CQ_2$. This served as the motivation for the first three authors to investigate the inner structure of $\CQ_2$, see \cites{ACR,ACR2}. Shortly after the first version of \cite{ACR} was being circulated, we realized that a good part of the questions, answers and techniques entering the proofs in \cite{ACR} have analogues in the much broader setting of boundary quotients of right LCM semigroups. In hindsight, it is fair to say that most of those results and proofs have in fact become clearer and more conceptual as a benefit of the higher level of abstraction.

Let us now describe the $C^*$-algebras to which we extend the line of research started in \cite{ACR}: We shall focus on universal $C^*$-algebras associated to particularly well-behaved examples of right LCM semigroups, that is, left cancellative monoids in which the intersection of any two principal right ideals is either empty, or another principal right ideal again. There are two main types of examples that we have in mind here: 
\begin{enumerate}[(a)]
\item integral dynamics $\Z \rtimes P \subset \Z\rtimes \N^\times$, where $P$ is generated by a family of mutually coprime positive integers, as in \cite{BOS1}, see Example~\ref{ex:integral dynamics}; 
\item semidirect products $G \rtimes_\theta \N$ for an injective endomorphism $\theta$ of a discrete, abelian group $G$ with finite cokernel as appearing in \cite{CuntzVershik}, see Example~\ref{ex:abelian group - single finite endo}.
\end{enumerate}
These are elementary examples of right LCM semigroups of the form $\gxp$ built from \emph{algebraic dynamical systems} $(\gpt)$ in the sense of \cite{BLS2}, that is, $G$ is a countable discrete group, $P$ is a right LCM semigroup, and $\theta\colon P \curvearrowright G$ is an action by injective group endomorphisms such that $pP \cap qP=rP$ implies $\theta_p(G) \cap \theta_q(G) = \theta_r(G)$ for all $p,q \in P$. Let us also recall that $(\gpt)$ is an algebraic dynamical system \emph{of finite type} if the index $[G:\theta_p(G)]$ is finite for all $p \in P$. Some results will be proven for more general algebraic dynamical systems $(\gpt)$ than the ones specified in (a) and (b), see for instance Section~\ref{sec:rel com} and Theorem~\ref{thm:quasi-free-max-1}. However, we assume that the right LCM semigroup $P$ is directed with respect to $p\geq q \stackrel{def}{\Leftrightarrow} p \in qP$. In this case, the boundary quotient $\CQ(\gxp)$, that is, the $C^*$-algebras we intend to study, is the quotient of the full semigroup $C^*$-algebra $C^*(\gxp)$ from \cite{Li1} by the relations $\sum_{[g] \in G/\theta_p(G)} v_{(g,p)}^{\phantom{*}}v_{(g,p)}^* = 1$ for all $p \in P$ for which $\theta_p(G)$ has finite index in $G$, where the $v_{(g,p)}$ denote the standard generating isometries in $C^*(\gxp)$, see \cite{BS1}*{Proposition~4.1}.

By virtue of \cite{BS1}*{Proposition~4.3}, directedness of $P$ is also crucial to have access to a natural representation $\pi\colon \CQ(\gxp) \to \CL(\ell^2(G))$, to which we shall refer as the \emph{canonical representation}. This canonical representation is faithful under moderate assumptions and we start by addressing its irreducibility in Proposition~\ref{prop:can rep is irred}. 
%In addition, we show that the decomposition of the induced representation of the torsion subalgebra $\CA_S$ for integral dynamics into irreducible subrepresentations in Proposition~\ref{prop:decomp of rep for A_S induced from can rep of Q}. 
In addition Proposition~\ref{prop:decomp of rep for A_S induced from can rep of Q} gives the decomposition of the induced representation of the torsion subalgebra $\CA_S$ for integral dynamics into irreducible subrepresentations. 

In light of the renewed interest in the study of maximal abelian subalgebras and Cartan subalgebras, see for instance \cites{BL1,BL2,LR17}, we take pain to spot mild conditions ensuring that $C^*(G)$ is a maximal abelian subalgebra of $\CQ(\gxp)$, see Theorem~\ref{thm:C*(G) masa}, and that the diagonal $\CD$ generated by the range projections of the generating isometries in $\CQ(\gxp)$ is a Cartan subalgebra, see Theorem~\ref{thm:D in Q Cartan}. The proofs we give rely on the canonical representation and conditional expectations in both cases. In the case of $C^*(G)$, we observe a rigidity phenomenon for conditional expectations from $\CL(\ell^2(G))$ onto the group von Neumann algebra $W^*(G)$, see Proposition~\ref{prop:cond-expC(G)}, which leads to a uniqueness result for conditional expectations $\CQ(\gxp) \to C^*(G)$, see Remark~\ref{rem:unique cond exp onto C*(G)}.

In Section~\ref{sec:rel com}, we consider general algebraic dynamical systems $(\gpt)$ with $P$ abelian and a strong form of minimality, namely $\bigcap_{p \in P} \theta_p(G) = \{1_G\}$. In this setting, we establish a generalization  of the classical Fourier-coefficient technique from \cite{Cun}, see Lemma~\ref{lem:Cuntzlemma}. This is a key ingredient  to prove Theorem~\ref{thm:rel commutant}, which asserts that the relative commutant of the generating isometries for $P$ inside $\CQ(\gxp)$ is as small as possible, namely $C^*(P^*)$, where $P^*$ is the subgroup of invertible elements in $P$. We remark that we need to assume $P^*$ to be finite for Theorem~\ref{thm:rel commutant} for technical reasons, but this seems likely to be unnecessary.

In contrast to the generality of Section~\ref{sec:rel com}, we focus on the intersection of the two classes described in Example~\ref{ex:integral dynamics} and Example~\ref{ex:abelian group - single finite endo} in Section~\ref{sec:Bogolubov}. That is to say, we study the $p$-adic ring $C^*$-algebras $\CQ_p = \CQ(\Z \rtimes \langle p\rangle)$ for $2\leq p < \infty$. The torsion subalgebra of $\CQ_p$ is then the Cuntz algebra $\CO_p$ generated by $(u^ms_p)_{0 \leq m \leq p-1}$. Understanding the way $\CO_p$ sits inside $\CQ_p$ is a very natural and rewarding task. For example, it was shown in \cite{BOS1} that the inclusion induces a split-injection onto the torsion part in K-theory. Here, we show that the representation $\sigma \colon \CO_p \to \CL(L^2([0,1]))$ known as the \emph{interval picture} extends in a unique way to representation $\widetilde{\sigma} \colon \CQ_p \to \CL(L^2([0,1]))$, which is not unitarily equivalent to the canonical representation, see Proposition~\ref{prop:interval rep from O_p to Q_p}. 

More importantly, we use this result and a fact about the canonical representation to completely solve the problem of extending Bogolubov automorphisms of $\CO_p$ to an endomorphism of $\CQ_p$, see Theorem~\ref{thm:extensible Bogolubov autos}: A Bogolubov automorphism is extendible if and only if it is a gauge automorphism, the exchange automorphism, or a composition of the former two. Moreover, a Bogolubov automorphism admits at most one extension, which either fixes the unitary $u$ (gauge automorphism) or sends it to $u^*$ (exchange automorphism is present). In particular, every extension of a Bogolubov automorphism is an automorphism of $\CQ_p$. This generalization of \cite{ACR}*{Theorem~4.14}  hints at a remarkable rigidity for extensions of certain automorphism groups of the torsion subalgebra to endomorphisms of the boundary quotient $\CQ(\gxp)$. Towards this goal, the contribution of Section~\ref{sec:Bogolubov} is to provide a refined version of the argument given in \cite{ACR} that may pave the way to similar results, for instance in the context of Zappa-Sz{\'e}p products associated to self-similar group actions, see \cite{BRRW}.

In Section~\ref{sec:preserving C*(G)}, we focus on algebraic dynamical systems $(\gpt)$ with abelian group $G$. In this case, the boundary quotient $\CQ(\gxp)$ contains a copy of the group $C^*$-algebra $C^*(G)$, and we investigate the structure of automorphisms of $\CQ(\gxp)$ that preserve $C^*(G)$ globally. To describe the subgroup of all automorphisms that fix $C^*(G)$ pointwise, which we shall denote by ${\rm Aut}_{C^*(G)} \CQ(\gxp)$, we introduce the notion of a $\theta$-twisted homomorphism $\psi\colon P \to C(\widehat{G},\IT)$, where $\widehat{G}$ denotes the Pontryagin dual of $G$, see Definition~\ref{def:twisted Hom(P,U(C*(G)))}. We denote the group formed under pointwise multiplication by ${\rm Hom}_\theta(P,C(\widehat{G},\IT))$. In Theorem~\ref{thm:aut of Q fixing C*(G)}, we then show that ${\rm Hom}_\theta(P,C(\widehat{G},\IT))$ embeds into ${\rm Aut}_{C^*(G)} \CQ(\gxp)$. Under the additional assumption that $C^*(G) \subset \CQ(\gxp)$ is maximal abelian, this embedding is also surjective and ${\rm End}_{C^*(G)} \CQ(\gxp) = {\rm Aut}_{C^*(G)} \CQ(\gxp)$. In particular, we deduce that every element in ${\rm End}_{C^*(G)} \CQ(\gxp)$ is characterized by a family of unitaries in $C^*(G)$, corresponding to a $\theta$-twisted homomorphism $\psi$. 

As a first application of this result, we prove that ${\rm Aut}_{C^*(G)} \CQ(\gxp)$ is a maximal abelian subgroup of ${\rm Aut} \CQ(\gxp)$ in Theorem~\ref{thm:quasi-free-max-1}, assuming that $(\gpt)$ is an algebraic dynamical system of finite type such that $G$ and $P$ are abelian, $C^*(G) \subset \CQ(\gxp)$ is maximal abelian, and $\CQ(\gxp)$ is simple. This result also applies to our motivating examples (a) and (b), see Corollary~\ref{cor:quasi-max-free}. A second application of the picture established in Theorem~\ref{thm:aut of Q fixing C*(G)} is given by Theorem~\ref{thm:aut of Q_N fixing O_n}: For every integral dynamics $(\Z,P,\theta)$, every automorphism of $\CQ(\Z\rtimes P)$ that fixes a Cuntz subalgebra $\CO_n = C^*(\{u^ks_n \mid 0\leq k \leq n-1\})$ pointwise for some $n \geq 2, n \in P$, necessarily belongs to ${\rm Aut}_{C^*(\Z)} \CQ(\Z \rtimes P)$. In addition, we determine the subgroup of ${\rm Hom}_\theta(P,C(\IT,\IT))$ corresponding to ${\rm Aut}_{\CO_n} \CQ(\Z \rtimes P)$. One important consequence of these findings is that two automorphism of $\CQ(\Z \rtimes P)$ are equal if and only if they agree on the torsion subalgebra $\CA_S$, see Corollary~\ref{cor:aut fixing torsion subalgebra}. A heuristic explanation for this phenomenon comes from Theorem~\ref{thm:aut of Q_N fixing O_n} in combination with the crossed product descriptions $\CQ(\gxp) \cong \varinjlim M_p(C^*(\Z)) \rtimes P$ and  $\CA_S \cong \varinjlim M_p(\C) \rtimes P$, see \cite{BOS1} for details.

Within the final Subsection~\ref{subsec:outerness}, we show that many of the automorphisms we considered previously yield outer actions: In the case of (b), every automorphism of $\CQ(G\rtimes_\theta \N)$ that fixes $C^*(G)$, and hence is given by some $f\in C(\widehat{G},\IT)$ due to Theorem~\ref{thm:aut of Q fixing C*(G)}, is outer if $f(1_{\widehat{G}}) \neq 1$, see Proposition~\ref{prop:outer condition for beta_F}. A criterion for the outerness of the gauge action based on functional equations is given in Theorem~\ref{thm:gaugeQpouter} for the case where $C^*(G) \subset \CQ(\gxp)$ is maximal abelian and $P$ is abelian. This applies readily to integral dynamics, see Corollary~\ref{cor:gaugeQouter integral dynamics}. However, the latter result also follows easily from the outerness result for the gauge action in Corollary~\ref{cor:gaugeouter}, which assumes $P$ to be abelian, but $\bigcap_{p \in P} \theta_p(G) = \{1_G\}$ in place of an assumption on $G$. In Theorem~\ref{thm:aut of Q_N inverting u are outer}, we then prove that, for integral dynamics, every automorphism that inverts the generating unitary $u$ is outer. Combining this with Corollary~\ref{cor:gaugeQouter integral dynamics} and Theorem~\ref{thm:extensible Bogolubov autos} shows that the extensions of Bogolubov automorphisms of $\CO_p$ yield an outer action $\IT\times \Z/2\Z \curvearrowright \CQ(\Z\rtimes P)$.\vspace*{3mm}

\emph{Acknowledgements:} This work was initiated during a visit of the fourth author to the first three authors in Rome in June 2016. He thanks his collaborators for quite an enjoyable and productive week, and Sapienza University for its hospitality and financial support. The fourth author was supported by RCN through FRIPRO 240362.

\section{Preliminaries and Notations}\label{sec:prelim}
In what follows we shall be dealing with $C^*$-algebras associated to semigroups built from algebraic dynamical systems. All our semigroups will have an identity, and hence are monoids. By an \emph{algebraic dynamical system} we mean a triple $(G, P,\theta)$, where
\begin{enumerate}[(a)]
\item $G$ is a countable discrete group;
\item $P$ is  a \emph{right LCM} semigroup, that is, a countable left cancellative monoid in which the intersection of two principal right ideals is either empty or another principal right ideal; and
\item $\theta$ is an action of $P$ upon $G$ through injective homomorphisms that  \emph{respects the order}, that is, $pP \cap qP = rP$ implies $\theta_p(G) \cap \theta_q(G) = \theta_r(G)$.
\end{enumerate}
Basics on algebraic dynamical systems $(\gpt)$ are to be found in \cites{BLS1,BLS2}. It is known that the last two conditions are equivalent to the right LCM condition for $\gxp$, given that $G$ is a group. This semidirect product is the actual right LCM semigroup we are interested in.

For the dynamical system $(\gpt)$, the (full) semigroup $C^*$-algebra $C^*(\gxp)$ in the sense of Li provides a natural object to study, see \cite{BLS2}. This $C^*$-algebra admits a canonical representation on $\ell^2(\gxp)$, which is faithful in a number of cases, e.g.~when $G$ is amenable and $P$ is left Ore with amenable enveloping group $P^{-1}P$. However, $C^*(\gxp)$ rather resembles $C^*$-algebras of Toeplitz type. In fact, it can be viewed as a Nica-Toeplitz algebra for a discrete product system of Hilbert bimodules over $P$, see \cite{BLS2}*{Theorem~7.9}.

Inspired by the approach of Crisp and Laca for right-angled Artin groups, see \cite{CrispLaca}, a \emph{boundary quotient} $\CQ(S)$ was defined in \cite{BRRW} for general right LCM semigroups $S$. This quotient of $C^*(S)$ was then studied in connection with structure results for $C^*$-algebras associated to tight groupoids of inverse semigroups, see \cite{Star} and the references therein for details. In \cite{BS1}, the boundary relation for $\CQ(S)$ was analyzed in order to identify this quotient with a previously known $C^*$-algebra in important cases. If $P$ is directed with respect to reverse inclusion of the associated principal right ideals, that is, $p \geq q \Leftrightarrow p \in qP$, then $\CQ(\gxp)$ is the universal $C^*$-algebra generated by a unitary representation $u$ of the group $G$ and a representation $s$ of the monoid $P$ by isometries satisfying the relations
\begin{enumerate}[(I)]
\item $s_pu_g = u_{\theta_p(g)}s_p$,
\item $s_p^*u_g^{\phantom{*}}s_q = \begin{cases} u_{g_1}^{\phantom{*}}s_{p'}^{\phantom{*}}s_{q'}^*u_{g_2} &,\text{ if } g = \theta_p(g_1)\theta_q(g_2) \text{ and } pP \cap qP = pp'P, pp'=qq' \\ 0 &,\text {otherwise.} \end{cases}$
\item $\sum_{\overline{g} \in G/\theta_p(G)} e_{g,p} = 1 \quad \text{if } N_p < \infty$, 
\end{enumerate}
where $e_{g,p} \doteq u_g^{\phantom{*}}s_p^{\phantom{*}}s_p^*u_g^*$ and $N_p \doteq [G:\theta_p(G)]$, see \cite{BS1}*{Proposition~4.1}. We remark that $e_{g,p}$ is independent of the choice of $g \in \overline{g}=g\theta_p(G)$ by (I). 

This result allows for an identification of $\CQ(\gxp)$ with the $C^*$-algebra $\CO[\gpt]$ from \cite{Sta1} for \emph{irreversible algebraic dynamical systems} $(\gpt)$ in the sense of \cite{Sta1}, see \cite{BS1}*{Corollary~4.2}. The latter $C^*$-algebra was constructed as a universal model for the natural realization of the dynamics on $\ell^2(G)$. More generally, it is shown in \cite{BS1}*{Proposition~4.3} that for a general algebraic dynamical system $(\gpt)$, the $C^*$-algebra $\CQ(\gxp)$ admits a canonical representation $\pi\colon \CQ(\gxp) \to \CL(\ell^2(G)), u_gs_p \mapsto U_gS_p$ given by $U_gS_p\xi_h = \xi_{g\theta_p(h)}$ if and only if  $P$ is directed. In addition, we remark that $\pi$ is known to be faithful under moderate assumptions, which in fact guarantee that $\CQ(\gxp)$ is even simple, see \cite{BS1}*{Theorem~4.17}.

Instead of continuing with this increasingly involved discussion of the structure for general algebraic dynamical systems, we have chosen to focus on two natural generalizations of the dynamics considered in \cite{ACR}. There, the authors studied the case of $(\gpt) = (\Z,\N,2)$, i.e.~multiplication by $2$ on the integers. We will consider the following: 

\begin{example}\label{ex:integral dynamics}
Let $S\subset \N^\times \setminus\{1\}$ be a family of relatively prime natural numbers, and $P \subset \N^\times$ the free abelian monoid generated by $S$, i.e. $P = \langle S \rangle$. Then $P$ is right LCM, and acts on $\Z$ by multiplication $\theta_p(n) = pn$ for $p \in P, n \in \Z$. This defines an action $\theta$ that respects the order if and only if $S$ consists of relatively prime numbers. Thus we obtain an (irreversible) algebraic dynamical system $(\Z,P,\theta)$, and the resulting boundary quotient $\CQ(\Z \rtimes_\theta P)$ has the following features:
\begin{enumerate}[(i)]
\item $s_p^*s_q^{\phantom{*}}=s_q^{\phantom{*}}s_p^*$ for all $p,q \in S, p \neq q$.
\item $\CQ(\Z \rtimes_\theta P)$ is the closed linear span of $\{ u^ms_p^{\phantom{*}}s_q^*u^{-n} \mid m,n \in \Z, p,q \in P \}$, see \cite{Sta1}*{Lemma~3.4}.
\item $\CQ(\Z \rtimes_\theta P)$ is a unital UCT Kirchberg algebra, see \cite{Sta1}*{Example~3.29(a)}.
\item The canonical representation $\pi\colon \CQ(\Z \rtimes_\theta P) \to \CL(\ell^2(\Z))$ is faithful.
\item The unitary $u$ in $\CQ(\Z \rtimes_\theta P)$ generates a copy of $C^*(\Z)$.
\item The \emph{diagonal subalgebra} $\CD_S$ of $\CQ(\Z \rtimes_\theta P)$ is generated by the family of commuting projections $\{ u^ns_p^{\phantom{*}}s_p^*u^{-n} \mid (n,p) \in \Z\rtimes_\theta P\}$. Its spectrum is given by the $S$-adic completion of $\Z$.
\item There is a natural gauge action $\gamma$ of the $\lvert S \rvert$-dimensional torus on $\CQ(\Z \rtimes_\theta P)$ given by $\gamma_\chi(u)=u, \gamma(s_p)=\chi(p)s_p$ for $p \in S$. The fixed-point algebra $\CF$ for $\gamma$ is the Bunce-Deddens algebra of type $(\prod_{p \in S}p)^\infty$, see \cite{Sta1}*{Example~3.29(a)}.
\item There is another distinguished subalgebra: 
\[\CA_S \doteq C^*(\{ u^ns_p \mid p \in S, 0 \leq n \leq p-1 \}) \subset \CQ(\Z \rtimes_\theta P).\] 
It is known through \cite{BOS1}*{Corollary~5.2 and Corollary~5.4} that $\CA_S$ is also a unital UCT Kirchberg algebra like $\CQ(\Z \rtimes_\theta P)$, and that the canonical inclusion $\CA_S \hookrightarrow \CQ(\Z \rtimes_\theta P)$ yields a split-injection onto the torsion part of $K_*(\CQ(\Z \rtimes_\theta P))$. For this reason, $\CA_S$ was named the \emph{torsion subalgebra} in \cite{BOS1}. It was also shown that $\CA_S$ is isomorphic to $\bigotimes_{p \in S} \CO_p$ if $\lvert S \rvert \leq 2$ or the greatest common divisor of $S-1 \subset \N^\times$ is $1$, see \cite{BOS1}*{Theorem~6.4}.
\end{enumerate} 
\end{example}

While Example~\ref{ex:integral dynamics} promotes the direction of considering actions of higher dimensional semigroups $P$ on the same group, we can equally well stay with the case of a single endomorphism, and allow the group to be more complicated than $\Z$:

\begin{example}\label{ex:abelian group - single finite endo}
Suppose $G$ is a discrete, abelian group, and $\theta$ is an injective group endomorphism of $G$ with finite cokernel. Then $(G,\N,\theta)$ form an (irreversible) algebraic dynamical system. In addition, we shall assume that $(G,\N,\theta)$ is \emph{minimal} in the sense of \cite{Sta1}, that is, $\bigcap_{n \in \N} \theta^n(G) = \{1_G\}$. This is equivalent to minimality of the dynamical system formed by the Pontryagin dual $\widehat{G}$ of $G$ and the dual endomorphism $\hat{\theta}$, see \cite{Sta2}. The $C^*$-algebra $\CQ(G \rtimes_\theta \N)$ then has the following features:
\begin{enumerate}[(i)]
\item $\CQ(G \rtimes_\theta \N)$ is the closed linear span of $\{ u_gs^ms^{n*}u_h \mid g,h \in G, m,n \in \N\}$, see \cite{Sta1}*{Lemma~3.4}.
\item $\CQ(G \rtimes_\theta \N)$ is a unital UCT Kirchberg algebra if and only if $\bigcap_{n \in \N}\theta^n(G) = \{1_G\}$, see \cite{Sta1}*{Corollary~3.28} and \cite{BS1}*{Corollary~4.14}.
\item The canonical representation $\pi\colon \CQ(G \rtimes_\theta \N) \to \CL(\ell^2(G))$ is faithful.
\item The unitary representation $u$ in $\CQ(G \rtimes_\theta \N)$ generates a copy of $C^*(G)$ as $G$ is abelian, hence amenable.
\item The \emph{diagonal subalgebra} $\CD_\theta$ of $\CQ(G \rtimes_\theta \N)$ is generated by the commuting projections $\{ u_g^{\phantom{*}}s^ns^{n*}u_g^* \mid (g,n) \in G\rtimes_\theta \N\}$. Its spectrum $G_\theta$ is a completion of $G$ with respect to $\theta$. This Cantor space is actually a compact abelian group, see \cite{Sta1}*{Remark~4.1~c)}. 
\item There is a natural gauge action $\gamma$ of the torus on $\CQ(G \rtimes_\theta \N)$ given by $\gamma_z(u_g)=u_g$ for $g \in G$, and $\gamma(s)=zs$. The fixed-point algebra $\CF$ for $\gamma$ is a generalized Bunce-Deddens algebra, see \cite{Sta1}*{Proposition~4.2}.
\item For each \emph{transversal} $\CT$ for $G/\theta(G)$, that is, a minimal complete set of representatives, we can consider the subalgebra $\CA_\CT \doteq C^*(\{ u_gs \mid g \in \CT\})$ of $\CQ(G \rtimes_\theta \N)$. We remark that, unlike the case of $G=\Z$ from Example~\ref{ex:integral dynamics}, there is no canonical choice for $\CT$ in general. On the other hand, $\CA_\CT$ is always isomorphic to $\CO_{N_\theta}$ with $N_\theta \doteq [G:\theta(G)]$ because the generators form a Cuntz family of isometries due to (III).
\end{enumerate}
\end{example}
We observe that the $p$-adic ring $C^*$-algebras form a family of algebras being in the intersection of the two above mentioned cases. In fact, we have that $\CQ_p=\CQ(\IZ\rtimes_{\theta_p} \IN)$ (cf. \cite{BOS1}*{Definition 2.1 and Proposition 2.12}).

The following two propositions about the canonical representation parallel the preliminary results we needed in \cite{ACR}.

\begin{proposition}\label{prop:can rep is irred}
Suppose $(\gpt)$ is an algebraic dynamical system for which $P$ is directed. If $(\gpt)$ is minimal, that is, $\bigcap_{p \in P} \theta_p(G) = \{1_G\}$ holds, then the canonical representation $\pi\colon \CQ(\gxp) \to \CL(\ell^2(G))$ is irreducible.
\end{proposition}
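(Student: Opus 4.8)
The plan is to prove irreducibility by showing that the commutant $\pi(\CQ(\gxp))'$ inside $\CL(\ell^2(G))$ reduces to the scalars $\C\,\id$. Since $\pi(\CQ(\gxp))$ is generated by the unitaries $U_g$ ($g \in G$) and the isometries $S_p$ ($p \in P$), a single operator $T \in \CL(\ell^2(G))$ commutes with the whole algebra precisely when it commutes with all $U_g$ and all $S_p$. So I would fix such a $T$ and aim to show $T \in \C\,\id$.

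First I would exploit commutation with the $U_g$. Recall that $U_g\xi_h = \xi_{gh}$ is just the left regular representation and that $\xi_g = U_g\xi_e$ for the standard basis $(\xi_h)_{h\in G}$ of $\ell^2(G)$. Setting $c := T\xi_e$, commutation gives $T\xi_h = TU_h\xi_e = U_hT\xi_e = U_h c$, so $T$ is completely determined by the single vector $c$. Next I would feed this into $TS_p = S_pT$. Evaluating on a basis vector and using $S_p\xi_h = \xi_{\theta_p(h)}$ together with relation (I) in the form $S_pU_h = U_{\theta_p(h)}S_p$, one computes $TS_p\xi_h = U_{\theta_p(h)}c$ on the one hand and $S_pT\xi_h = S_pU_hc = U_{\theta_p(h)}S_pc$ on the other. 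Cancelling the unitary $U_{\theta_p(h)}$ collapses the two relations into the single clean identity $S_pc = c$ for every $p \in P$.

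The conclusion is then almost immediate. The equation $S_pc = c$ forces $c$ to lie in the range of the isometry $S_p$, and $\range(S_p) = \clsp\{\xi_k \mid k \in \theta_p(G)\}$ is exactly the subspace of vectors supported on $\theta_p(G)$. Since this holds for all $p \in P$, the support of $c$ is contained in $\bigcap_{p \in P}\theta_p(G)$, which equals $\{1_G\}$ by the minimality hypothesis. Hence $c \in \C\,\xi_e$ and $T = c(1_G)\,\id$, so the commutant is trivial and $\pi$ is irreducible.

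I do not expect a serious obstacle here; the only real work is the bookkeeping that distils the two commutation relations into the fixed-point equation $S_pc = c$, the crux being the correct use of relation (I) to push $S_p$ past $U_h$ so that the unitary factor can be cancelled. Once this is in place, minimality does the remaining work essentially tautologically, turning $\bigcap_{p\in P}\theta_p(G) = \{1_G\}$ directly into the triviality of the commutant.
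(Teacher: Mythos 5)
Your proof is correct and follows essentially the same route as the paper: both arguments reduce a commutant element to its value on $\xi_{1_G}$, use commutation with the $S_p$ to show that vector is fixed by every $S_p$ and hence supported on $\bigcap_{p\in P}\theta_p(G)=\{1_G\}$, and conclude triviality of the commutant. The only cosmetic difference is that the paper works with the projection onto an invariant subspace and finishes with a case distinction, whereas you determine a general commutant operator $T$ completely from $T\xi_{1_G}$ and read off $T\in\C\,\id$ directly.
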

\begin{proof}
Let $(\xi_g)_{g \in G}$ denote the standard orthonormal basis for $\ell^2(G)$, and $M \subset \ell^2(G)$ be a $\pi(\CQ(\gxp))$-invariant closed subspace. Then the orthogonal projection $Q$ onto $M$ belongs to $\pi(\CQ(\gxp))'$. Hence we have $S_pQ\xi_{1_G} = Q\xi_{1_G}$, i.e.~$Q\xi_{1_G}$ is an eigenvector for each $S_p$ with eigenvalue $1$. The condition $\bigcap_{p \in P} \theta_p(G) = \{1_G\}$ now implies that $Q\xi_{1_G} \in \C\xi_{1_G}$, so that either $\xi_{1_G} \in M$ or $\xi_{1_G} \in M^\perp$. In the first case, the invariance of $M$ under $\pi(u_g), g \in G$, yields $\xi_g \in M$ for all $g \in G$, so that $M=\ell^2(G)$. The second case is analogous as $M^\perp$ is necessarily $\pi(\CQ(\gxp))$-invariant as well.
\end{proof}

In the case of Example~\ref{ex:integral dynamics}, we can also describe the decomposition of the induced representation $\pi|_{\CA_S}$ of $\CA_S$ into irreducible subrepresentations. Indeed, the arguments from \cite{ACR}*{Propositon~2.6 -- Corollary~2.9} carry over verbatim, where one has to replace $\CO_2$ by $\CA_S$. Letting $E_+, E_- \in \CL(\ell^2(\Z))$ denote the orthogonal projection onto $\ell^2(\Z_{\geq 0})$ and $\ell^2(\Z_{< 0})$, respectively, we arrive at:

\begin{proposition}\label{prop:decomp of rep for A_S induced from can rep of Q}
For $(\gpt)$ as in Example~\ref{ex:integral dynamics}, the representation of $\CA_S$ obtained as the restriction of the canonical representation $\pi\colon \CQ(\Z\rtimes P) \to \CL(\ell^2(\Z))$ decomposes into two disjoint, irreducible representations $\pi_+ \doteq E_+\pi E_+$ and $\pi_- \doteq E_-\pi E_-$. In particular, we have $\pi(\CA_S)' = \C E_+ \oplus \C E_-$ and $\pi(\CA_S)'' =  \CL(\ell^2(\Z_{\geq 0})) \oplus \CL(\ell^2(\Z_{< 0}))$. 
\end{proposition}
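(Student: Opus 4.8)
The plan is to verify, in order, that $E_+$ and $E_-$ reduce $\pi(\CA_S)$, that each corner $\pi_\pm$ is irreducible, and that $\pi_+$ and $\pi_-$ are disjoint; the two displayed identities for the commutant and bicommutant then follow formally. Throughout I fix one $p \in S$, so that $\CO_p = C^*(\{u^ns_p \mid 0 \le n \le p-1\}) \subseteq \CA_S$.

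First I would record the action of the generators. The canonical representation gives $\pi(u^n s_p)\xi_h = \xi_{n+ph}$ for $h \in \Z$. The crucial bookkeeping is that, for $0 \le n \le p-1$, the operators $\pi(u^n s_p)$ and their adjoints preserve both $\ell^2(\Z_{\ge 0})$ and $\ell^2(\Z_{<0})$: if $h \ge 0$ then $n + ph \ge 0$, whereas if $h \le -1$ then $n + ph \le (p-1) - p = -1$. The adjoint $\pi(s_p^*u^{-n})$ sends $\xi_k$ to $\xi_{(k-n)/p}$ when $p \mid (k-n)$ and to $0$ otherwise; using $0 \le n \le p-1$ one checks that $p \mid (k-n)$ with $k \ge 0$ forces $(k-n)/p \ge 0$, and with $k \le -1$ forces $(k-n)/p \le -1$. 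Since such generators and their adjoints generate $\CA_S$, this shows $E_\pm \in \pi(\CA_S)'$; as $E_+ + E_- = 1$, the representation splits as $\pi_+ \oplus \pi_-$ with $\pi_\pm = E_\pm \pi E_\pm$.

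Next, irreducibility of each corner, which I would prove exactly in the spirit of Proposition~\ref{prop:can rep is irred}. For $\pi_+$, the vector $\xi_0$ is the unique (up to scalars) fixed vector of the isometry $\pi(s_p)$ in $\ell^2(\Z_{\ge 0})$, because $\pi(s_p)v = v$ forces $\supp v \subseteq \bigcap_j p^j\Z_{\ge 0} = \{0\}$; moreover $\xi_0$ is cyclic already for $\pi(\CO_p)$, since every $\xi_k$ ($k \ge 0$) is reached from $\xi_0$ by a word in the generators via the recursion $\xi_k = \pi(u^n s_p)\xi_m$ with $k = n+pm$, $0 \le n \le p-1$ and $0 \le m < k$ for $k \ge 1$. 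If $Q$ is the projection onto a closed invariant subspace, then $\pi(s_p)Q\xi_0 = Q\xi_0$ gives $Q\xi_0 \in \C\xi_0$, and cyclicity forces $Q \in \{0,1\}$. For $\pi_-$ the same argument applies with $\xi_{-1}$ in place of $\xi_0$ and the isometry $\pi(u^{p-1}s_p)$ in place of $\pi(s_p)$: one has $\pi(u^{p-1}s_p)\xi_{-1} = \xi_{-1}$, this is the unique fixed vector in $\ell^2(\Z_{<0})$, and it is cyclic by the analogous recursion.

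For disjointness, the conceptual crux is to isolate an invariant distinguishing the two corners: $\pi_+$ admits a nonzero vector fixed by $\pi(s_p)$, namely $\xi_0$, whereas $\pi_-$ admits none, since in $\ell^2(\Z_{<0})$ the equation $\pi(s_p)v = v$ forces $\supp v \subseteq \bigcap_j p^j\Z_{<0} = \emptyset$. Hence any intertwiner $W \colon \pi_+ \to \pi_-$ sends $\xi_0$ to a fixed vector of $\pi(s_p)$ in $\ell^2(\Z_{<0})$, so $W\xi_0 = 0$; since $\xi_0$ is cyclic, $W = 0$. Finally, writing a general element of $\pi(\CA_S)'$ as a $2\times 2$ block matrix with respect to $\ell^2(\Z_{\ge 0}) \oplus \ell^2(\Z_{<0})$, the diagonal blocks lie in $\pi_\pm(\CA_S)' = \C$ by irreducibility and the off-diagonal blocks are intertwiners between $\pi_+$ and $\pi_-$, hence vanish by disjointness. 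This yields $\pi(\CA_S)' = \C E_+ \oplus \C E_-$, and taking commutants once more gives $\pi(\CA_S)'' = \CL(\ell^2(\Z_{\ge 0})) \oplus \CL(\ell^2(\Z_{<0}))$. I expect the block-preservation step to demand the most care, as it is the only place where the constraint $0 \le n \le p-1$ defining $\CA_S$ is genuinely used.
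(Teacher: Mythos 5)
Your proof is correct and takes essentially the same route as the argument the paper delegates to \cite{ACR}*{Proposition~2.6 -- Corollary~2.9} (transported from $\CO_2$ to $\CA_S$): invariance of the two half-spaces $\ell^2(\Z_{\geq 0})$ and $\ell^2(\Z_{<0})$, irreducibility of each corner via uniqueness of the fixed vector of $\pi(s_p)$ (resp.\ $\pi(u^{p-1}s_p)$) combined with cyclicity, and disjointness from the presence versus absence of such a fixed vector. The only point worth making explicit is that the block-preservation computation must be run for every generator $u^n s_q$, $q\in S$, not just for the single fixed $p$, but your argument is uniform in $p$ and covers this.
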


\section{Maximal abelian subalgebras}\label{sec:masas}
\subsection{The group C*-algebra}
In this subsection, we first restrict to the following setup for showing that $C^*(G)$ is maximal abelian in $\CQ(\gxp)$, see Theorem~\ref{thm:C*(G) masa}. Recall that an algebraic dynamical system $(\gpt)$ is said to be of \emph{finite type} if $N_p \doteq [G:\theta_p(G)]$ is finite for all $p \in P$.

\begin{example} \label{ex:C*(G) masa setup}
Suppose $(\gpt)$ is a commutative algebraic dynamical system of finite type with directed $P$, such that the canonical representation $\pi\colon \CQ(\gxp) \to \CL(\ell^2(G))$ is faithful. In addition, assume that for all $p,q \in P, p \neq q$ there exists $g \in G$ of infinite order such that the group endomorphism $\Phi_{p,q}$ of $G$ given by $\Phi_{p,q}(h) \doteq \theta_p(h)\theta_q(h^{-1})$ is injective on $\langle g \rangle \cong \Z$.
\end{example}

The last part of the assumptions in Example~\ref{ex:C*(G) masa setup} holds for instance if $G$ is not a torsion group and $\Phi_{p,q}$ is injective for all distinct $p, q\in P$. All the assumptions in Example~\ref{ex:C*(G) masa setup} are satisfied by all examples described in Example~\ref{ex:integral dynamics} or Example~\ref{ex:abelian group - single finite endo} (due to minimality). From now on, we assume that $(\gpt)$ is as specified in Example~\ref{ex:C*(G) masa setup}.

\begin{lemma}\label{lem:empty point spectrum}
For all distinct $p,q \in P$, there exists $g \in G$ such that the point spectrum of $\pi(u_{\Phi_{p,q}(g)}) \in \CL(\ell^2(G))$ is empty.
\end{lemma}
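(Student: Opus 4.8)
The plan is to reduce the statement to a concrete fact about translation operators on $\ell^2(G)$ and then feed in the hypothesis from Example~\ref{ex:C*(G) masa setup}. First I would recall that in the canonical representation one has $\pi(u_k)\xi_h = \xi_{kh}$ for all $k,h \in G$ (this is the case $p = 1_P$ of $U_gS_p\xi_h = \xi_{g\theta_p(h)}$), so that $\pi(u_k)$ is simply the left translation operator $U_k$ on $\ell^2(G)$. The object we must control is therefore the point spectrum of $U_{\Phi_{p,q}(g)}$ as $g$ varies, and the whole task becomes: produce an element $k = \Phi_{p,q}(g)$ of $G$ for which the translation $U_k$ has no eigenvalues.

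The key observation is that for any $k \in G$ the translation $U_k$ has empty point spectrum precisely when $k$ has infinite order, and only the ``if'' direction is needed. To prove it, I would decompose $G$ into cosets of the cyclic subgroup $\langle k\rangle$, yielding an orthogonal decomposition $\ell^2(G) = \bigoplus_i \ell^2(c_i\langle k\rangle)$ into $U_k$-invariant subspaces; invariance holds because $k\cdot c_ik^n = c_ik^{n+1}$ stays in the same coset, using that $G$ is abelian. When $k$ has infinite order, $\langle k\rangle \cong \Z$, and the identification $c_ik^n \leftrightarrow n$ realizes the restriction of $U_k$ to each summand as the bilateral shift on $\ell^2(\Z)$. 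A standard computation then shows this shift has no eigenvalues: an eigenvector $\sum_n a_n\xi_n$ with eigenvalue $\lambda$ forces $a_{n-1} = \lambda a_n$, whence $\lvert a_n\rvert$ is constant (as $\lvert\lambda\rvert = 1$), which is incompatible with square-summability unless the vector is zero. Hence $U_k$ has empty point spectrum whenever $k$ has infinite order.

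It then remains to locate a suitable $g$, and here I invoke the hypothesis built into Example~\ref{ex:C*(G) masa setup}: for the given distinct $p,q$ there exists $g \in G$ of infinite order on whose cyclic subgroup $\langle g\rangle \cong \Z$ the endomorphism $\Phi_{p,q}$ is injective. Since $\Phi_{p,q}|_{\langle g\rangle}$ is an injective homomorphism out of $\Z$, its value $\Phi_{p,q}(g)$ cannot have finite order: if $\Phi_{p,q}(g)^n = \Phi_{p,q}(g^n) = 1_G$ for some $n \geq 1$, injectivity would give $g^n = 1_G$, contradicting that $g$ has infinite order. Thus $k \doteq \Phi_{p,q}(g)$ has infinite order, and by the previous paragraph $\pi(u_{\Phi_{p,q}(g)}) = U_k$ has empty point spectrum.

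I do not anticipate a serious obstacle: the content is entirely the classical spectral fact that translation by an infinite-order element has purely continuous spectrum, and the algebraic input — that an injective homomorphism from $\Z$ sends a generator to an infinite-order element — is immediate. The only point needing mild care is the bookkeeping of the coset decomposition and the verification that each coset is genuinely $U_k$-invariant, both of which follow at once from commutativity of $G$.
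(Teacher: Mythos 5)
Your proposal is correct and follows essentially the same route as the paper: both deduce that $\Phi_{p,q}(g)$ has infinite order from injectivity of $\Phi_{p,q}$ on $\langle g\rangle$, and both then reduce to the fact that translation by an infinite-order element has empty point spectrum. Your coset decomposition into copies of the bilateral shift is just a repackaging of the paper's direct coefficient argument, which observes that an eigenvector would satisfy $\lvert c_{k^m h'}\rvert = \lvert c_{h'}\rvert$ for all $m \in \Z$, contradicting square-summability.
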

\begin{proof}
Let $g \in G$ have the property described in Example~\ref{ex:C*(G) masa setup} for given $p \neq q$. Then the element $\Phi_{p,q}(g)= \theta_p(g)\theta_q(g^{-1})$ still has infinite order, as $1_G=(\theta_q(g^{-1}) \theta_p(g))^n=\Phi_{p,q}(g^n)$ forces $g^n=1_G$. The conclusion then follows from the general observation that the point spectrum of $\pi(u_k)$ is empty whenever $k\in G$ is of infinite order. Indeed, suppose there was an eigenvector $\xi = \sum_{h\in G} c_h \xi_h \in\ell^2(G)$ of $\pi(u_k)$, and say $c_{h'} \neq 0$. Then we would get $\sum_{h\in G} c_{k^{-1}h} \xi_h = \pi(u_k) \xi = \lambda \xi =\sum_{h\in G} \lambda c_h \xi_h$ for some $\lambda\in\IT$ (as $u_k$ is a unitary). Thus we would have $|c_{k^{m}h'}| = |c_{h'}| \neq 0$ for all $m \in \Z$. Since $k$ is of infinite order, this contradicts $\xi \in \ell^2(G)$, and we apply this to $\Phi_{p,q}(g)$.
\end{proof}

Denote by $W^*(G) \subset \CL(\ell^2(G))$ the group von Neumann algebra of $G$. Then there exists a conditional expectation $E_1\colon \CL(\ell^2(G)) \to  W^*(G)$, see \cite{KS}. 

\begin{proposition}\label{prop:cond-expC(G)}
If $(\gpt)$ is as described in Example~\ref{ex:C*(G) masa setup}, then every conditional expectation $E\colon \CL(\ell^2(G)) \to  W^*(G)$ satisfies 
\begin{equation}\label{eq:cond exp C*(G)}
E \circ \pi(u_g^{\phantom{*}}s_p^{\phantom{*}}s_q^*u_h^*) =  \delta_{p,q} \ N_p^{-1} u_{gh^{-1}}\quad \text{ for all } g,h \in G, p,q \in P.
\end{equation}
In particular, every conditional expectation $E\colon \CL(\ell^2(G)) \to  W^*(G)$ restricts to the conditional expectation $E_0\colon \pi(\CQ(\gxp)) \to \pi(C^*(G))$ given by
\[E_0\circ \pi(u_g^{\phantom{*}}s_p^{\phantom{*}}s_q^*u_h^*) = \delta_{p,q} \ N_p^{-1} \pi(u_{gh^{-1}})\quad \text{ for all } g,h \in G, p,q \in P.\] 
\end{proposition}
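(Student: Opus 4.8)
The plan is to compute $E$ on a spanning set of monomials and to exploit that $W^*(G)$ is abelian because $G$ is. Since $\pi(u_g),\pi(u_h)\in W^*(G)$, the $W^*(G)$-bimodule property of $E$ immediately gives
\[
E\circ\pi(u_g^{\phantom{*}}s_p^{\phantom{*}}s_q^*u_h^*)=\pi(u_g)\,E\bigl(\pi(s_p^{\phantom{*}}s_q^*)\bigr)\,\pi(u_h^*),
\]
so everything reduces to showing $E(\pi(s_p^{\phantom{*}}s_p^*))=N_p^{-1}$ and $E(\pi(s_p^{\phantom{*}}s_q^*))=0$ for $p\neq q$; the asserted formula then follows from $\pi(u_g)\pi(u_h^*)=\pi(u_{gh^{-1}})$. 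Throughout I would write $\Omega=\xi_{1_G}$, the canonical trace vector, which is cyclic and separating for the abelian von Neumann algebra $W^*(G)$.

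For the diagonal case $p=q$ I would invoke relation (III). Picking coset representatives $g_1,\dots,g_{N_p}$ for $G/\theta_p(G)$ (finitely many, by finite type), the partition of unity $\sum_i\pi(e_{g_i,p})=1$ reads $\sum_i\pi(u_{g_i})\,\pi(s_p^{\phantom{*}}s_p^*)\,\pi(u_{g_i}^*)=1$. Applying $E$ and using the bimodule property gives $\sum_i\pi(u_{g_i})\,E(\pi(s_p^{\phantom{*}}s_p^*))\,\pi(u_{g_i}^*)=1$; since $E(\pi(s_p^{\phantom{*}}s_p^*))\in W^*(G)$ and $W^*(G)$ is commutative, every conjugation is trivial and the left-hand side collapses to $N_p\,E(\pi(s_p^{\phantom{*}}s_p^*))$, whence $E(\pi(s_p^{\phantom{*}}s_p^*))=N_p^{-1}$.

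The off-diagonal case is where the actual content lies. Writing $A=\pi(s_p^{\phantom{*}}s_q^*)$, relation (I) yields $\pi(u_{\theta_p(g)})A=\pi(s_p^{\phantom{*}}u_g^{\phantom{*}}s_q^*)=A\,\pi(u_{\theta_q(g)})$ for every $g\in G$, and commutativity of $G$ lets me rewrite this as $\pi(u_{\Phi_{p,q}(g)})A=\pi(u_{\theta_q(g)})^{-1}A\,\pi(u_{\theta_q(g)})$. Applying $E$, the bimodule property and commutativity of $W^*(G)$ collapse the right-hand side to $E(A)$, while the left becomes $\pi(u_{\Phi_{p,q}(g)})\,E(A)$; thus $\pi(u_{\Phi_{p,q}(g)})\,E(A)=E(A)$ for all $g\in G$. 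Evaluating at $\Omega$ shows that $E(A)\Omega$ is a fixed vector, hence an eigenvector with eigenvalue $1$, of $\pi(u_{\Phi_{p,q}(g)})$ unless it vanishes. The main obstacle is exactly to exclude a nonzero such vector, and this is what Lemma~\ref{lem:empty point spectrum} is built for: choosing $g$ so that $\pi(u_{\Phi_{p,q}(g)})$ has empty point spectrum forces $E(A)\Omega=0$, and since $\Omega$ is separating for $W^*(G)$ we conclude $E(A)=0$.

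For the final assertion I would note that $E$ restricts to the identity on $W^*(G)\supseteq\pi(C^*(G))$, while the formula just proved shows $E$ sends each spanning monomial into $\pi(C^*(G))$; by continuity $E\bigl(\pi(\CQ(\gxp))\bigr)\subseteq\pi(C^*(G))$. Hence $E_0:=E|_{\pi(\CQ(\gxp))}$ is a conditional expectation onto $\pi(C^*(G))$ with the stated action, and in particular it is the same for every $E$, which yields the announced rigidity.
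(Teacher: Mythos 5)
Your proof is correct and follows essentially the same route as the paper: reduce to $E(\pi(s_p^{\phantom{*}}s_q^*))$ via the bimodule property, use relation (III) for the diagonal case, and derive $\pi(u_{\Phi_{p,q}(g)})E(\pi(s_p^{\phantom{*}}s_q^*))=E(\pi(s_p^{\phantom{*}}s_q^*))$ from relation (I) so that Lemma~\ref{lem:empty point spectrum} kills the off-diagonal terms. The only cosmetic difference is that you evaluate at the separating vector $\xi_{1_G}$, whereas the paper concludes directly from the injectivity of the operator $1-\pi(u_{\Phi_{p,q}(g)})$ on all of $\ell^2(G)$.
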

\begin{proof}
Since $G$ is abelian, $W^*(G) = \pi(C^*(G))'$ is abelian so that $E\circ \pi(u_g^{\phantom{*}}s_p^{\phantom{*}}s_q^*u_h^*) = \pi(u_{gh^{-1}}) E\circ \pi(s_p^{\phantom{*}}s_q^*)$ for all $g,h \in G, p,q \in P$. In particular, (III) implies 
\[\begin{array}{c}
1 = E \circ \pi \bigl(\sum\limits_{\overline{g} \in G/\theta_p(G)} u_g^{\phantom{*}}s_p^{\phantom{*}}s_p^*u_g^* \bigr) = N_p \ E\circ\pi(s_p^{\phantom{*}}s_p^*),
\end{array}\]
so that $E \circ \pi(u_g^{\phantom{*}}s_p^{\phantom{*}}s_p^*u_g^*) =  N_p^{-1}$.

On the other hand, for $p\neq q$, (I) yields 
\[ \pi(u_{\theta_q(g)}^{\phantom{*}})E(\pi(s_p^{\phantom{*}} s_q^*)) = E(\pi(s_p^{\phantom{*}} s_q^*u_{\theta_q(g)}^{\phantom{*}})) = E(\pi(u_{\theta_p(g)}^{\phantom{*}} s_p^{\phantom{*}} s_q^*)) = \pi(u_{\theta_p(g)}^{\phantom{*}}) E(\pi(s_p^{\phantom{*}} s_q^*)),\]
which is equivalent to $(1-\pi(u_{\Phi_{p,q}(g)}))E(\pi(s_p^{\phantom{*}} s_q^*))=0$.
By Lemma~\ref{lem:empty point spectrum}, the value $1$ is not an eigenvalue of $\pi(u_{\Phi_{p,q}(g)}) \in \CL(\ell^2(G))$ for $g \in G$ chosen according to the hypothesis in Example~\ref{ex:C*(G) masa setup} for $p \neq q$. Thus $1-\pi(u_{\Phi_{p,q}(g)})$ is injective, which allows us to conclude that $E\circ\pi(s_p^{\phantom{*}} s_q^*) = 0$. 
\end{proof}

\begin{remark}\label{rem:unique cond exp onto C*(G)}
Due to the faithfulness of the canonical representation, the proof of Proposition~\ref{prop:cond-expC(G)} in fact shows that $ \pi^{-1} \circ E_0 \circ \pi \colon\CQ(\gxp) \to C^*(G)$ is the unique conditional expectation from $\CQ(\gxp)$ onto $C^*(G)$.%, as long as the conditions from Example~\ref{ex:C*(G) masa setup} are satisfied.
\end{remark}

\begin{thm}\label{thm:C*(G) masa}
Suppose $(\gpt)$ satisfies the conditions in Example~\ref{ex:C*(G) masa setup}. If $(\gpt)$ is minimal, then the $C^*$-algebra $C^*(G)$ is maximal abelian in $\CQ(\gxp)$.
\end{thm}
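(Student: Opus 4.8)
The plan is to carry out the entire argument inside the faithful canonical representation $\pi$ and to play the conditional expectation of Proposition~\ref{prop:cond-expC(G)} against the fact that $W^*(G)$ is already maximal abelian in $\CL(\ell^2(G))$. First I would note that the restriction of $\pi$ to $C^*(G)$ is the left regular representation of $G$: taking $p=1_P$ in $U_gS_p\xi_h = \xi_{g\theta_p(h)}$ gives $\pi(u_g)\xi_h = \xi_{gh}$. Hence $\pi(C^*(G))'' = W^*(G)$, and since $G$ is abelian the left and right group von Neumann algebras coincide, so that $\pi(C^*(G))' = W^*(G)$. In other words, $W^*(G)$ is maximal abelian in $\CL(\ell^2(G))$, and the only issue is that $\pi(C^*(G))$ is strictly smaller than its weak closure $W^*(G)$ when $G$ is infinite.

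The heart of the proof is then a short squeeze. Let $x \in \CQ(\gxp)$ commute with every $u_g$, $g \in G$; I must show $x \in C^*(G)$. Applying $\pi$ and the previous paragraph gives $\pi(x) \in \pi(C^*(G))' = W^*(G)$. Now fix a conditional expectation $E_1 \colon \CL(\ell^2(G)) \to W^*(G)$, which exists by \cite{KS}. On the one hand, $\pi(x)$ lies in the range $W^*(G)$ of $E_1$, so $E_1(\pi(x)) = \pi(x)$. On the other hand, $\pi(x) \in \pi(\CQ(\gxp))$, and by Proposition~\ref{prop:cond-expC(G)} the restriction of $E_1$ to $\pi(\CQ(\gxp))$ is the expectation $E_0$, which takes values in the \emph{smaller} algebra $\pi(C^*(G))$. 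Combining the two computations yields $\pi(x) = E_1(\pi(x)) = E_0(\pi(x)) \in \pi(C^*(G))$, and faithfulness of $\pi$ then gives $x \in C^*(G)$. Since the reverse inclusion $C^*(G) \subseteq C^*(G)' \cap \CQ(\gxp)$ is automatic from commutativity of $G$, this proves that $C^*(G)$ is maximal abelian.

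The genuinely substantive input is the rigidity encoded in Proposition~\ref{prop:cond-expC(G)}: that \emph{any} conditional expectation onto the large algebra $W^*(G)$ is forced, upon restriction to $\pi(\CQ(\gxp))$, to annihilate every mixed term $\pi(u_g s_p s_q^* u_h^*)$ with $p \neq q$ and hence to land in $\pi(C^*(G))$. This is precisely where the hypotheses of Example~\ref{ex:C*(G) masa setup} — the existence of an infinite-order element on which $\Phi_{p,q}$ is injective, routed through the empty point spectrum of Lemma~\ref{lem:empty point spectrum} — do the real work, by making $1-\pi(u_{\Phi_{p,q}(g)})$ injective and thereby forcing $E\circ\pi(s_p s_q^*)=0$. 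By contrast, the ingredients feeding directly into the squeeze, namely that $W^*(G)$ is maximal abelian in $\CL(\ell^2(G))$ and that $\pi$ is faithful (assumed in Example~\ref{ex:C*(G) masa setup}), are comparatively soft; minimality serves to place us within the scope of Proposition~\ref{prop:can rep is irred} and to secure the hypotheses in our motivating examples, even though the argument above leans on faithfulness rather than on irreducibility as such. I would therefore expect no serious obstacle once Proposition~\ref{prop:cond-expC(G)} is in hand; the only point needing a line of care is the elementary observation that a conditional expectation fixes the elements already lying in its range, so that $E_1(\pi(x)) = \pi(x)$.
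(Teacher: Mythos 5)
Your proposal is correct and follows essentially the same route as the paper: both arguments place $\pi(x)$ in $\pi(C^*(G))' = W^*(G)$, invoke the rigidity of conditional expectations onto $W^*(G)$ from Proposition~\ref{prop:cond-expC(G)} to conclude that $E_1(\pi(x)) = E_0(\pi(x)) \in \pi(C^*(G))$, and finish by faithfulness of $\pi$. The only cosmetic difference is that the paper makes the approximation by elements of the spanning set explicit (to justify that $E_1$ and $E_0$ agree on all of $\pi(\CQ(\gxp))$ and that the limit stays in the closed subalgebra $\pi(C^*(G))$), whereas you absorb this into the statement of Proposition~\ref{prop:cond-expC(G)}.
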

\begin{proof}
It will be convenient to work in the representation $\pi\colon \CQ(\gxp) \to \CL(\ell^2(G))$. Let $x\in C^*(G)'\cap \CQ(\gxp)$ and choose a sequence $(x_k)_{k \in \N} \subset {\rm span}\{ u_g^{\phantom{*}}s_p^{\phantom{*}}s_q^*u_h \mid g,h \in G, p,q \in P\}$ with $\|x-x_k\| \to 0$. Then Proposition~\ref{prop:cond-expC(G)} entails
\[\|\pi(x) - E_0(\pi(x_k))\| = \|E_1(\pi(x)) - E_0(\pi(x_k))\| = \|E_0(\pi(x-x_k))\| \leq \|x-x_k\| \to 0\]
because $\pi(x) \in \pi(C^*(G))' = W^*(G)$ and $\|E_0\circ \pi\| \leq 1$. Since $E_0\circ \pi$ expects onto $\pi(C^*(G))$ and $\pi$ is faithful, we conclude that $x \in C^*(G)$.
\end{proof}

\begin{remark}\label{rem:masa for P free abelian monoid}
The canonical representation is clearly faithful if $\CQ(\gxp)$ is simple, which is closely linked to the condition $\bigcap_{p \in P} \theta_p(G) = \{1_G\}$ in the situation of Example~\ref{ex:C*(G) masa setup}, see for instance \cite{BS1}*{Remark~4.15}. In the case where $P$ is free abelian, minimality of $(\gpt)$ was already known to imply that $C^*(G) \subset \CQ(\gxp)$ is a maximal abelian subalgebra, see \cite{Sta2}*{Corollary~2.7, Theorem~5.6 and Proposition~6.1}. Moreover, minimality of $(\gpt)$ is equivalent to simplicity of $\CQ(\gxp)$ in this situation, see \cite{Sta2}*{Corollary~6.2}.
\end{remark}

It is certainly an interesting task to investigate under what conditions  $C^*(G)$ is not only maximal abelian, see Theorem~\ref{thm:C*(G) masa}, but in fact a Cartan subalgebra of $\CQ(\gxp)$. In the case of the diagonal $\CD$, this question has a clear and affirmative answer, as we shall see in the next subsection.

\subsection{The diagonal}
Let us now turn our attention to the diagonal subalgebra $\CD$ of $\CQ(\gxp)$, that is, the commutative subalgebra of $\CQ(\gxp)$ generated by the projections $e_{g,p} \doteq u_g^{\phantom{*}}s_p^{\phantom{*}}s_p^* u_g^*$ with $(g,p) \in \gxp$, see \cite{Sta1}*{Lemma~3.5}. We note that
\[\CD= \overline{{\rm span}} \{ u_g^{\phantom{*}}s_p^{\phantom{*}}s_p^*u_{g^{-1}} \mid (g,p) \in \gxp \}.\]
In many cases, there exists a faithful conditional expectation $\Theta\colon \CQ(\gxp) \to \CD$ given by $u_g^{\phantom{*}}s_p^{\phantom{*}}s_q^* u_h^* \mapsto \delta_{g,h}\delta_{p,q} e_{g,p}$, for instance if $G$ and $P$ are abelian so that we obtain $\Theta$ as the composition of two faithful conditional expectations obtained from averaging over the gauge actions of the duals of the Grothendieck group of $P$ and of the dual group of $G$ using $\CQ(\gxp) \cong (\CD \rtimes G) \rtimes P$, see \cite{Sta1}*{Corollary 3.22}. In order to recover the maximality of $\CD$ in this general setting we need to make an extra assumption:
\begin{equation}\label{eq:only trivial primage match}
\text{For all } p,q \in P, p \neq q, \text{ every $g \in G$ has finitely many preimages under } \Phi_{p,q},
\end{equation}
where $\Phi_{p,q}(h) = \theta_p(h)\theta_q(h)^{-1}$, see Example~\ref{ex:C*(G) masa setup}. Equation~\ref{eq:only trivial primage match} holds for example for the integral dynamics $(\Z,P,\theta)$ described in Example~\ref{ex:integral dynamics}: given $m \in \Z$ and $p,q \in \N^\times$ with $p>q$, the only solution to $m=pn -qn$ is of course $n= (p-q)^{-1}m$ (which belongs to $\Z$ in case $m \in (p-q)\Z$). Thus the sets are not only finite, but singletons. However, \eqref{eq:only trivial primage match} does not hold as soon as there is $p \in P\setminus \{1_P\}$ for which the endomorphism $\theta_p$ fixes some group element $h \neq 1_G$ of infinite order. Indeed, we then get $pp \neq p$ by left cancellation for $P$, but $1_G = \theta_{pp}(h)\theta_p(h)^{-1}$. For instance, this happens for $(\Z^2,\theta,\N)$ given by the diagonal matrix $\theta_1= {\rm diag}(p,1)$ with $p \in \Z\setminus \{-1,0,1\}$.

\begin{lemma}\label{lem:cond exp onto D compared with the one onto l^infty}
Let $(\gpt)$ be an algebraic dynamical system with directed $P$, and let $\CE$ be the conditional expectation from $\CL(\ell^2(G))$ onto $\ell^\infty(G)$ given by $\CE(V)\xi_g \doteq \langle V\xi_g, \xi_g\rangle \xi_g$. If \eqref{eq:only trivial primage match} holds, then the restriction of $\CE|_{\pi(\CQ(\gxp))}$ coincides with $\pi \circ \Theta$ up to compact operators. 
\end{lemma}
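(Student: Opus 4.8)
The plan is to check the asserted identity on the standard dense spanning set and then extend by continuity. Both $\CE\circ\pi$ and $\pi\circ\Theta$ are bounded linear maps from $\CQ(\gxp)$ into $\CL(\ell^2(G))$, each of norm at most one (being a composition of a $*$-homomorphism with a conditional expectation), so their difference $D \doteq \CE\circ\pi - \pi\circ\Theta$ is bounded. Since $\CK(\ell^2(G))$ is norm-closed and $\CQ(\gxp)$ is the closed linear span of the monomials $u_g^{\phantom{*}}s_p^{\phantom{*}}s_q^*u_h^*$ with $g,h \in G$ and $p,q \in P$, it suffices to show that $D(u_g^{\phantom{*}}s_p^{\phantom{*}}s_q^*u_h^*) \in \CK(\ell^2(G))$ for every such monomial; the general case then follows by linearity and passage to the closure.

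Next I would compute the matrix of $\pi(u_g^{\phantom{*}}s_p^{\phantom{*}}s_q^*u_h^*)$ in the standard basis $(\xi_k)_{k \in G}$. Using $U_g\xi_k = \xi_{gk}$, $S_p\xi_k = \xi_{\theta_p(k)}$, and $S_q^*\xi_k = \xi_{\theta_q^{-1}(k)}$ when $k \in \theta_q(G)$ and $0$ otherwise, one obtains
\[\pi(u_g^{\phantom{*}}s_p^{\phantom{*}}s_q^*u_h^*)\,\xi_k = \begin{cases} \xi_{g\theta_p(\theta_q^{-1}(h^{-1}k))} &, h^{-1}k \in \theta_q(G),\\ 0 &, \text{otherwise.}\end{cases}\]
Applying $\CE$ extracts the diagonal, so $\CE\bigl(\pi(u_g^{\phantom{*}}s_p^{\phantom{*}}s_q^*u_h^*)\bigr)$ is the diagonal projection onto $\clsp\{\xi_k \mid \langle \pi(u_g^{\phantom{*}}s_p^{\phantom{*}}s_q^*u_h^*)\xi_k,\xi_k\rangle = 1\}$. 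Writing $k = h\theta_q(m)$ with $m \doteq \theta_q^{-1}(h^{-1}k)$, the defining condition $g\theta_p(m) = k = h\theta_q(m)$ becomes $\Phi_{p,q}(m) = g^{-1}h$; that is, the contributing indices $k$ correspond bijectively (via the injective $\theta_q$) to the preimages of $g^{-1}h$ under $\Phi_{p,q}$.

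The conclusion then follows from a short case analysis. If $p = q$ and $g = h$, then $\Phi_{p,p}(m) = 1_G = g^{-1}h$ for all $m$, so every $k \in g\theta_p(G)$ contributes and $\CE(\pi(e_{g,p}))$ is precisely the projection onto $\ell^2(g\theta_p(G))$, which equals $\pi(e_{g,p}) = \pi\circ\Theta(u_g^{\phantom{*}}s_p^{\phantom{*}}s_p^*u_g^*)$; here $D$ vanishes since $\pi(e_{g,p})$ is already diagonal. If $p = q$ but $g \neq h$, the condition $\Phi_{p,p}(m) = 1_G = g^{-1}h$ is never met, no index contributes, $\CE(\pi(\cdot)) = 0 = \pi\circ\Theta(\cdot)$, and again $D = 0$. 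Finally, if $p \neq q$, then $\pi\circ\Theta$ annihilates the monomial, while \eqref{eq:only trivial primage match} guarantees that $g^{-1}h$ has only finitely many preimages under $\Phi_{p,q}$; hence only finitely many indices $k$ contribute and $\CE(\pi(u_g^{\phantom{*}}s_p^{\phantom{*}}s_q^*u_h^*))$ is a finite-rank, thus compact, diagonal operator. In all three cases $D(u_g^{\phantom{*}}s_p^{\phantom{*}}s_q^*u_h^*) \in \CK(\ell^2(G))$, and the proof is completed by the density argument above.

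The individual computations are routine; the only place where genuine input enters is the case $p \neq q$, where the finiteness hypothesis \eqref{eq:only trivial primage match} is exactly what upgrades the off-diagonal correction from a bounded diagonal operator to a compact one. I expect this to be the crux, together with the bookkeeping needed to read off the correct diagonal matrix coefficients of the partial isometries $\pi(u_g^{\phantom{*}}s_p^{\phantom{*}}s_q^*u_h^*)$.
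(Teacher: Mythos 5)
Your proof is correct and follows essentially the same route as the paper's: compute the matrix of $\pi(u_g^{\phantom{*}}s_p^{\phantom{*}}s_q^*u_h^*)$ on the standard basis, read off the diagonal, reduce the contributing indices to the preimages of $g^{-1}h$ under $\Phi_{p,q}$, and invoke \eqref{eq:only trivial primage match} to get finite rank in the case $p\neq q$, finishing with the density argument. Your case analysis for $p=q$ is just a slightly more explicit version of the paper's one-line observation that the condition forces $g=h$ and the resulting operator is diagonal.
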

\begin{proof}
We observe that for $g,h,k \in G$ and $p,q \in P$,
\[\CE(\pi(u_g^{\phantom{*}}s_p^{\phantom{*}}s_q^*u_h^*))\xi_k =\begin{cases}
\xi_k &, \text{if } h^{-1}k \in \theta_q(G) \text{ and } k= g\theta_p(\theta_q^{-1}(h^{-1}k)),\\
0 &, \text{otherwise.} \end{cases}\]
In the case of $p=q$, the second part in the first condition amounts to $g=h$ so that the operator needs to be diagonal. This is exactly what happens for $\Theta$. For $p \neq q$, assume $k=h\theta_q(\ell)$ for some $\ell \in G$. Then the second part turns into $g^{-1}h = \theta_p(\ell)\theta_q(\ell)^{-1}$. By \eqref{eq:only trivial primage match}, there are only finitely many $\ell \in G$ that satisfy this, so $\CE(\pi(u_g^{\phantom{*}}s_p^{\phantom{*}}s_q^*u_h^*))$ and $\pi(\Theta(u_g^{\phantom{*}}s_p^{\phantom{*}}s_q^*u_h^*))$ differ by a finite rank operator. Hence the claim follows from $\CQ(\gxp) = \overline{{\rm span}}\{ u_g^{\phantom{*}}s_p^{\phantom{*}}s_q^*u_h^* \mid (g,p),(h,q) \in \gxp\}$.
\end{proof}

\begin{remark}\label{rem:cond exp onto D intertwined by can rep}
In particular, if in the situation of Lemma~\ref{lem:cond exp onto D compared with the one onto l^infty} the algebra $\pi(\CQ(\gxp))$ does not contain nontrivial compacts, then $\CE(\pi(x)) = \pi(\Theta(x))$ for every $x \in \CQ(\gxp)$ with $\CE(\pi(x)) \in \pi(\CQ(\gxp))$. This is for instance true if $\CQ(\gxp)$ is purely infinite and simple.
\end{remark}

\begin{lemma}\label{lem:D'' equal to l^infty(G)}
If $(\gpt)$ is a minimal algebraic dynamical system with directed $P$, then $\pi(\CD)'=\pi(\CD)''=\ell^\infty(G)$.
\end{lemma}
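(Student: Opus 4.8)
The plan is to identify $\pi(\CD)''$ directly with the diagonal algebra $\ell^\infty(G)\subset\CL(\ell^2(G))$ and then read off the commutant. First I would record the easy inclusion: writing $(\xi_g)_{g\in G}$ for the standard orthonormal basis, each generator $\pi(e_{g,p})$ is the orthogonal projection onto $\clsp\{\xi_k : k\in g\theta_p(G)\}$, hence a diagonal operator, so $\pi(\CD)\subseteq\ell^\infty(G)$ and therefore $\pi(\CD)''\subseteq\ell^\infty(G)$. Since $\ell^\infty(G)$ is a maximal abelian subalgebra of $\CL(\ell^2(G))$, it suffices to prove the reverse inclusion $\ell^\infty(G)\subseteq\pi(\CD)''$: once $\pi(\CD)''=\ell^\infty(G)$ is established, the commutant statement follows from $\pi(\CD)'=(\pi(\CD)'')'=\ell^\infty(G)'=\ell^\infty(G)$.

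The heart of the argument is to produce the minimal (rank-one) diagonal projections inside $\pi(\CD)''$. Fix $g\in G$ and consider the family $(\pi(e_{g,p}))_{p\in P}$, indexed by $P$ ordered by $p'\ge p\Leftrightarrow p'\in pP$; directedness of $P$ makes this a genuine net. For $p'\in pP$ we have $\theta_{p'}(G)\subseteq\theta_p(G)$, whence $g\theta_{p'}(G)\subseteq g\theta_p(G)$ and $\pi(e_{g,p'})\le\pi(e_{g,p})$, so the net is decreasing. A bounded decreasing net of projections converges in the strong operator topology to its infimum, which is the projection onto $\bigcap_{p\in P}\clsp\{\xi_k:k\in g\theta_p(G)\}=\clsp\{\xi_k:k\in\bigcap_{p\in P}g\theta_p(G)\}$. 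Minimality gives $\bigcap_{p\in P}\theta_p(G)=\{1_G\}$, hence $\bigcap_{p\in P}g\theta_p(G)=\{g\}$, so the limit is the rank-one projection $Q_g$ onto $\C\xi_g$. As $\pi(\CD)''$ is closed in the strong operator topology, $Q_g\in\pi(\CD)''$ for every $g\in G$.

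To finish, I would invoke the standard fact that the minimal projections $\{Q_g : g\in G\}$ generate $\ell^\infty(G)$ as a von Neumann algebra, since any diagonal operator $(a_g)_{g\in G}\in\ell^\infty(G)$ is the strong limit of its finite truncations $\sum_{g\in F}a_gQ_g$. Hence $\ell^\infty(G)\subseteq\pi(\CD)''$, and combined with the first paragraph this yields $\pi(\CD)''=\ell^\infty(G)$ and then $\pi(\CD)'=\ell^\infty(G)$. The only genuinely substantive step is the identification of the net infimum in the second paragraph, and this is precisely where both hypotheses enter: directedness of $P$ guarantees that the coset projections form a \emph{decreasing} net (rather than an unordered family whose infimum is harder to locate), while minimality forces the nested cosets $g\theta_p(G)$ to shrink down to the singleton $\{g\}$. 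Neither hypothesis can be dropped, since without a trivial intersection the infimum would be a higher-rank coset projection and $\pi(\CD)''$ would be strictly smaller than $\ell^\infty(G)$.
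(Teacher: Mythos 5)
Your proof is correct and follows essentially the same route as the paper: both arguments show that minimality makes the decreasing net $(\pi(e_{g,p}))_{p\in P}$ converge strongly to the rank-one projection onto $\C\xi_g$, conclude that $\pi(\CD)''\supseteq\ell^\infty(G)$, and then take commutants. You merely spell out a few details the paper leaves implicit (the identification of the strong limit of the net with the infimum, and the easy inclusion $\pi(\CD)\subseteq\ell^\infty(G)$), which is fine.
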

\begin{proof}
By the minimality assumption, the net of projections $(\pi(u_g^{\phantom{*}}s_p^{\phantom{*}}s_p^*u_g^*))_{p \in P} \subset \CD$ converges strongly to the projection onto $\IC\xi_g$ for every $g \in G$. Since $\pi(\CD)''$ is a von Neumann algebra and hence strongly closed, we deduce that all these minimal projections belong to $\pi(\CD)''$, and hence $\pi(\CD)''=\ell^\infty(G)$. Now $\pi(\CD)' = \pi(\CD)''' = \ell^\infty(G)'=\ell^\infty(G)$ proves the remaining assertion.
\end{proof}

\begin{thm}\label{thm:D in Q Cartan}
%Spell out the hypotheses 
Suppose $(\gpt)$ is a minimal algebraic dynamical system with directed $P$ such that $\Theta\colon \CQ(\gxp) \to \CD$ is a faithful conditional expectation and \eqref{eq:only trivial primage match} holds. Then the diagonal $\CD$ is a Cartan subalgebra of $\CQ(\gxp)$.
\end{thm}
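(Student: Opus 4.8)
The plan is to check the four defining properties of a Cartan subalgebra in Renault's sense: $\CD$ is abelian and contains an approximate unit of $\CQ(\gxp)$, there is a faithful conditional expectation of $\CQ(\gxp)$ onto $\CD$, $\CD$ is maximal abelian, and $\CD$ is regular. The first two are immediate in the present setup: $\CD$ is commutative by construction and contains the unit $1 = \sum_{\overline{g} \in G/\theta_p(G)} e_{g,p}$ coming from relation (III), so it contains an approximate unit; and $\Theta$ is a faithful conditional expectation onto $\CD$ by hypothesis. The real work therefore lies in maximality and regularity, both of which I would carry out in the canonical representation $\pi$.

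For maximality I would proceed as in Theorem~\ref{thm:C*(G) masa}, but now comparing $\Theta$ with the diagonal expectation $\CE$. Minimality and directedness make $\pi$ irreducible by Proposition~\ref{prop:can rep is irred}, and the standing hypotheses place us in the purely infinite simple setting (cf.\ Remark~\ref{rem:masa for P free abelian monoid} and the Kirchberg-algebra statements in Examples~\ref{ex:integral dynamics} and \ref{ex:abelian group - single finite endo}), so that $\pi$ is faithful and $\pi(\CQ(\gxp))$ contains no nonzero compact operators, making Remark~\ref{rem:cond exp onto D intertwined by can rep} applicable. Given $x \in \CD' \cap \CQ(\gxp)$, the operator $\pi(x)$ commutes with $\pi(\CD)$ and hence lies in $\pi(\CD)' = \ell^\infty(G)$ by Lemma~\ref{lem:D'' equal to l^infty(G)}; being diagonal, it satisfies $\CE(\pi(x)) = \pi(x) \in \pi(\CQ(\gxp))$. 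Remark~\ref{rem:cond exp onto D intertwined by can rep} then forces $\pi(x) = \CE(\pi(x)) = \pi(\Theta(x))$, and faithfulness of $\pi$ yields $x = \Theta(x) \in \CD$, so $\CD' \cap \CQ(\gxp) = \CD$.

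For regularity I would exhibit the generators as normalizers of $\CD$ and invoke that they generate $\CQ(\gxp)$. Using relation (I) together with $s_p s_q = s_{pq}$ one computes $u_g e_{h,p} u_g^* = e_{gh,p}$, $s_p e_{h,q} s_p^* = e_{\theta_p(h),pq}$, and $u_g^* e_{h,p} u_g = e_{g^{-1}h,p}$, all of which lie in $\CD$. Writing $s_p^* e_{h,q} s_p = (s_p^* u_h s_q)(s_p^* u_h s_q)^*$ and evaluating $s_p^* u_h s_q$ by relation (II) as either $0$ or $u_{h_1} s_{p'} s_{q'}^* u_{h_2}$, the relation $s_{q'}^* s_{q'} = 1$ collapses the product to $e_{h_1,p'} \in \CD$. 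Hence every $u_g$ and every $s_p$ normalizes $\CD$, and since these elements generate $\CQ(\gxp)$, the set of normalizers generates $\CQ(\gxp)$, which is regularity.

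I expect the delicate point to be the compact-operator estimate inside the maximality argument: Lemma~\ref{lem:cond exp onto D compared with the one onto l^infty} only identifies $\CE|_{\pi(\CQ(\gxp))}$ with $\pi \circ \Theta$ \emph{modulo compacts}, so the passage from $\pi(x) - \pi(\Theta(x)) \in \CK$ to $\pi(x) = \pi(\Theta(x))$ genuinely relies on $\pi(\CQ(\gxp))$ having trivial intersection with the compacts. This in turn rests on the purely infinite simple structure of $\CQ(\gxp)$ (equivalently, on the faithfulness and irreducibility of $\pi$), which is precisely where minimality is essential; by contrast, the regularity computations are routine consequences of relations (I) and (II).
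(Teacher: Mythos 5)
Your proposal is correct and follows essentially the same route as the paper: maximality via Lemma~\ref{lem:D'' equal to l^infty(G)} and Remark~\ref{rem:cond exp onto D intertwined by can rep} in the canonical representation (with pure infiniteness and simplicity, deduced from minimality, ruling out nonzero compacts), and regularity by checking that $u_g$, $s_p$ and $s_p^*$ conjugate the projections $e_{h,q}$ back into $\CD$ via relations (I) and (II). Your identification of the compact-operator step as the delicate point matches exactly where the paper leans on Remark~\ref{rem:cond exp onto D intertwined by can rep}.
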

\begin{proof}
By assumption, $\Theta$ is a faithful conditional expectation. Moreover, this allows us to deduce simplicity and pure infiniteness for $\CQ(\gxp)$ out of minimality, that is, $\bigcap_{(g,p) \in \gxp} g\theta_p(G)g^{-1} = \{1_G\}$, very much in the way \cite{Sta1}*{Theorem~3.26} is proven. In particular, the canonical representation $\pi$ is faithful. To show that $\CD$ is maximal abelian in $\CQ(\gxp)$, we follow the strategy of \cite{ACR}*{Theorem~3.9}: Suppose $x \in \CD' \cap \CQ(\gxp)$. By Lemma~\ref{lem:D'' equal to l^infty(G)}, the operator $\pi(x)$ belongs to $\ell^\infty(G) \cap \pi(\CQ(\gxp))$, so that Remark~\ref{rem:cond exp onto D intertwined by can rep} implies that $\pi(x) = \CE(\pi(x)) = \pi(\Theta(x))$. Finally, the normalizer of $\CD$  in $\CQ(\gxp)$ generates $\CQ(\gxp)$ since
one has, for all $h,g \in G$, $p,q \in P$, 
\begin{align*}
u_h^{\phantom{*}}(u_g^{\phantom{*}} s_p^{\phantom{*}} s_p^* u_g^*)u_h^* & =u_{gh}^{\phantom{*}} s_p^{\phantom{*}} s_p^* u_{gh}^* , \quad
%\quad \text{and} \quad s_q^{\phantom{*}}(u_g^{\phantom{*}} s_p^{\phantom{*}} s_p^* u_g^*)s_q^*  = u_{\theta_q(g)}^{\phantom{*}} s_{pq}^{\phantom{*}} s_{pq}^* u_{\theta_q(g)}^*
%
%\]
%Moreover, we also have that $s_q\in \CN_{\CD}(\CQ(\gxp))$ because
%$$
s_q(u_g s_p s_p^* u_g^*)s_q^* = u_{\theta_q(g)} s_{pq} s_{pq}^* u_{\theta_q(g)}^* \\% \in \CD \ 
s_q^*(u_g s_p s_p^* u_g^*)s_q  & = \begin{cases} u_{g_1}^{\phantom{*}}s_{q'}^{\phantom{*}}s_{p'}^*u_{g_2}(u_{g_1}^{\phantom{*}}s_{q'}^{\phantom{*}}s_{p'}^*u_{g_2})^*  \\ 0  \end{cases}\\
& = \begin{cases} u_{g_1}^{\phantom{*}}s_{q'}^{\phantom{*}}s_{q'}^*u_{g_1}^* \in \CD &\text{if } g = \theta_q(g_1)\theta_p(g_2), %\text{ and }
 pP \cap qP = pp'P, pp'=qq' \\ 0 & \text {otherwise} \end{cases}
\end{align*}
showing that $u_h,s_q\in \CN_{\CD}(\CQ(\gxp))$. 
% for all $h \in G$, $q \in P$.
\end{proof}

For the sake of completeness we would like to observe that an elegant, but less elementary,
% more involved 
proof of a strengthening of the above result can be achieved as follows, compare \cite{ACR}*{end of Section~3.1}: Under the assumptions that $G$ is amenable and $P$ embeds into an amenable group $H$ (for simplicity take $H = \langle P\rangle$), the algebra $\CQ(\gxp)$ is a reduced partial transformation groupoid $C^*$-algebra. If $\gxp$ happens to be minimal, then the corresponding partial action of $G \rtimes H$ is topologically free. The diagonal $\CD$ is then seen to be a Cartan subalgebra in $\CQ(\gxp)$ as an application of \cite{Ren}*{Proposition~3.1}. In addition, let us mention that \cite{LR17} applies to our situation, showing that there are infinitely many pairwise inequivalent Cartan subalgebras of $\CQ(\gxp)$, because the latter will be a UCT Kirchberg algebra in many of the above cases.

In hindsight, we thus see that \eqref{eq:only trivial primage match} is an artifact of our strategy of proof here, but we still find it interesting that a variant of it enters the stage for both commutative subalgebras $\CD$ and $C^*(G)$, see Example~\ref{ex:C*(G) masa setup}.

\section{The relative commutant of a family of generating isometries}\label{sec:rel com}
Throughout this section we shall assume that $(\gpt)$ is an algebraic dynamical system with $P$ abelian and $\bigcap_{p \in P} \theta_p(G) = \{1_G\}$. Requiring $P$ to be abelian grants us access to the Grothendieck group $P^{-1}P$ of $P$, which is a discrete abelian group. We denote by $T$ its Pontryagin dual, which is a compact abelian group. The group $T$ acts on $\CQ(\gxp)$ by a gauge action $\gamma$ with $\gamma_\chi(u_gs_p) = \chi(p) \ u_gs_p$, and we denote by $\CF$ the fixed-point algebra for $\gamma$, see Example~\ref{ex:integral dynamics}~(vii) and Example~\ref{ex:abelian group - single finite endo}~(vi). We remark that $\CF = \overline{{\rm span}}\{ u_g^{\phantom{*}}s_p^{\phantom{*}}s_p^*u_h^* \mid g,h \in G, p \in P\}$. 

The aim of this section is to show that  $C^*(\{s_p \mid p \in P\})'\cap \CQ(\gxp)$ is as small as possible, see Theorem~\ref{thm:rel commutant}. %On the way, we also show that the gauge action $\gamma \colon T \curvearrowright \CQ(\gxp)$ is outer, see Corollary~\ref{cor:gaugeouter}.

If $P$ is abelian, it is in particular directed with respect to $p \geq q \Leftrightarrow p \in qP$, so there is a sequence $(p_n)_{n \in \N} \subset P$ with $p_{n+1} \in p_nP$ for all $n$ such that for every $p \in P$ we have $p_n \in pP$ for $n$ large enough. 
We shall need the following result in Lemma~\ref{lem:unitaries in rel commutant}.

\begin{lemma}\label{lem:limfixpoint}
For $x \in \CF$, the sequence $(s_{p_m}^*xs_{p_m}^{\phantom{*}})_{m \in \N}$ converges to a limit in $\C$.
\end{lemma}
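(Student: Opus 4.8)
The plan is to first reduce to the spanning elements $u_g^{\phantom{*}} s_p^{\phantom{*}} s_p^* u_h^*$ of $\CF$ and then to compute the compression $s_{p_m}^* (u_g^{\phantom{*}} s_p^{\phantom{*}} s_p^* u_h^*) s_{p_m}^{\phantom{*}}$ explicitly. The reduction rests on the observation that each map $T_m \colon \CQ(\gxp) \to \CQ(\gxp)$, $T_m(y) = s_{p_m}^* y s_{p_m}^{\phantom{*}}$, is a linear contraction, because $s_{p_m}$ is an isometry. Granting that $(T_m(x))_m$ converges to a scalar for every spanning element $x$, linearity extends this to finite linear combinations, and a standard $\varepsilon/3$ argument then shows that $(T_m(x))_m$ is Cauchy for arbitrary $x \in \CF$; since every $T_m(y)$ for a finite linear combination $y$ lies in the closed subspace $\C 1$ and $T_m(x)$ stays within $\|x-y\|$ of it, the limit is forced into $\C 1$ as well.

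For the core computation, fix a spanning element and choose $m$ so large that $p_m \in pP$, writing $p_m = p r_m$; since $s$ is a homomorphism, $s_{p_m} = s_p^{\phantom{*}} s_{r_m}^{\phantom{*}}$. Relations (I) and (II) yield $s_p^* u_k^{\phantom{*}} s_p^{\phantom{*}} = u_{\theta_p^{-1}(k)}$ when $k \in \theta_p(G)$ and $s_p^* u_k^{\phantom{*}} s_p^{\phantom{*}} = 0$ otherwise, so inserting $s_p^{\phantom{*}} s_p^*$ between the two unitaries gives
\[
s_p^* (u_g^{\phantom{*}} s_p^{\phantom{*}} s_p^* u_h^*) s_p^{\phantom{*}} = (s_p^* u_g^{\phantom{*}} s_p^{\phantom{*}})(s_p^* u_h^{\phantom{*}} s_p^{\phantom{*}})^* = \begin{cases} u_{\theta_p^{-1}(gh^{-1})} & \text{if } g, h \in \theta_p(G), \\ 0 & \text{otherwise.} \end{cases}
\]
Compressing once more by $s_{r_m}$ and using $\theta_{r_m}^{-1} \circ \theta_p^{-1} = \theta_{p_m}^{-1}$ leaves
\[
s_{p_m}^* (u_g^{\phantom{*}} s_p^{\phantom{*}} s_p^* u_h^*) s_{p_m}^{\phantom{*}} = \begin{cases} u_{\theta_{p_m}^{-1}(gh^{-1})} & \text{if } g, h \in \theta_p(G) \text{ and } gh^{-1} \in \theta_{p_m}(G), \\ 0 & \text{otherwise.} \end{cases}
\]

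It remains to pass to the limit in this (single-term) expression. Since $(p_m)_m$ is cofinal, the subgroups $\theta_{p_m}(G)$ form a decreasing chain, and minimality forces $\bigcap_m \theta_{p_m}(G) = \{1_G\}$. Hence if $g = h \in \theta_p(G)$ then $gh^{-1} = 1_G$ lies in every $\theta_{p_m}(G)$, and the compression equals $u_{1_G} = 1$ for all large $m$; if $g \neq h$, then $gh^{-1} \neq 1_G$ eventually leaves $\theta_{p_m}(G)$, so the compression is eventually $0$; and if $g \notin \theta_p(G)$ or $h \notin \theta_p(G)$ it is already $0$. In every case the sequence is eventually constant, equal to the scalar $1$ when $g = h \in \theta_p(G)$ and to $0$ otherwise, which completes the reduction step and hence establishes convergence.

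I expect the main obstacle to be the scalar-valuedness of the limit rather than its mere existence: a priori each compression is a unitary $u_{\theta_{p_m}^{-1}(gh^{-1})}$, and it is precisely minimality together with the cofinality, hence monotonicity, of the chain $(\theta_{p_m}(G))_m$ that kills the off-diagonal contributions ($g \neq h$) and collapses the diagonal ones to $1$. The only other point requiring care is the bookkeeping of the membership conditions $g, h \in \theta_p(G)$, but this is routine once relations (I) and (II) are applied in the correct order.
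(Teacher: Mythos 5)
Your proof is correct and takes essentially the same approach as the paper's: compute the compression of a spanning element $u_g^{\phantom{*}}s_p^{\phantom{*}}s_p^*u_h^*$ explicitly via relation (II), use minimality together with cofinality of $(p_m)$ to see that it is eventually the scalar $1$ or $0$, and pass to general $x\in\CF$ by contractivity of $y\mapsto s_{p_m}^*ys_{p_m}^{\phantom{*}}$ plus an approximation argument. The paper packages the last step as a Cauchy net indexed over pairs $(k,m)$ rather than your $\varepsilon/3$ estimate, but the content is identical.
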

\begin{proof}
First, let $x \in {\rm span}\{ u_g^{\phantom{*}}s_p^{\phantom{*}}s_p^*u_h \mid g,h \in G, p \in P\}$. As $x$ is a finite linear combination of elements of the form $u_g^{\phantom{*}}s_p^{\phantom{*}}s_p^*u_h$, there is $m_0 \in \N$ such that $p_m \in pP$ for all $p$ that appear and all $m\geq m_0$. Therefore, we get $s_{p_m}^*xs_{p_m}^{\phantom{*}} \in \IC [G] \subset \CQ(\gxp)$ for all $m\geq m_0$ by (II), more precisely $s_{p_m}^*xs_{p_m}^{\phantom{*}} = \sum_{g\in F} d_g u_g$ with $d_g \in \C$ for a finite set $F \subset G$. As $\bigcap_{p \in P}\theta_p(G) = \{1_G\}$, we can choose $p \in P$ such that $g \notin \theta_p(G)$ for all $g \in F\setminus\{1_G\}$. Let $m'\in \N$ be large enough so that $p_m \in p_{m_0}pP$ for all $m \geq m'$. For such $m$, we then have $s_{p_m}^*xs_{p_m}^{\phantom{*}} = d_1 \doteq c_x \in \C$, using (II).

Now let $x \in \CF$ and pick a sequence $(x_k)_{k \in \N}\subset {\rm span}\{ u_g^{\phantom{*}}s_p^{\phantom{*}}s_p^*u_h \mid g,h \in G, p \in P\}$ with $x_k \to x$. Let $m_k \in \N$ be an $m'$ from the first part for $x_k$, and define a net $(x_{k,m})_{(k,m) \in \Lambda}$ by $\Lambda \doteq \{ (k,m) \in \N^2 \mid m \geq m_k\}$ and $x_{k,m} \doteq s_{p_m}^*x_ks_{p_m}^{\phantom{*}}$. Then $(x_{k,m})_{(k,m) \in \Lambda}$ is a Cauchy net (in $\C$) as 
\[\|x_{k,m} - x_{\ell,n}\| = |c_{x_k}-c_{x_\ell}| = \|x_{k,m+n} - x_{\ell,m+n}\| = \|s_{p_{m+n}}^*(x_k-x_\ell)s_{p_{m+n}}^{\phantom{*}}\| \leq \|x_k-x_\ell\|\]
and $x_k \to x$. Therefore, $x_{k,m} \to c$ for some $c \in \C$ because $\C$ is complete. From this we easily deduce $s_{p_m}^*xs_{p_m}^{\phantom{*}} \to c \in \C$ as $\|s_{p_m}^*xs_{p_m}^{\phantom{*}} - x_{k,m}\| \leq \|x-x_k\| \to 0$.
\end{proof}

Recall that, given $p,q \in P$, an element $r \in P$ satisfying $pP\cap qP = rP$ is called a right least common multiple (right LCM) of $p$ and $q$. Commutativity in $P$ implies that we do not need to distinguish between right and left multiples, but more importantly, the notion of a \emph{greatest common divisor} (GCD) also makes sense for elements in $P$: For $p,q \in P$, an element $r \in P$ is a \emph{common divisor} if $p,q \in rP$. An element $d \in P$ is said to be the \emph{greatest common divisor} (GCD) for $p$ and $q$, if every common divisor $\tilde{d}$ of $p$ and $q$ satisfies $d \in \tilde{d}P$. For every pair $p$ and $q$, the GCD exists by an application of Zorn's Lemma, and it is unique up to multiplication by the subgroup of invertible elements $P^*$. The GCD relates to the (right) LCM as follows:

\begin{proposition}\label{prop:GCD and LCM for abelian right LCM}
Let $P$ be an abelian right LCM semigroup. If $d$ is a greatest common divisor and $m$ a least common multiple for $p,q \in P$, then $mP^* = d^{-1}pqP^*$.  
\end{proposition}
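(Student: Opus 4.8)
The plan is to prove the equality of right ideals $mP^* = d^{-1}pq\,P^*$ by showing that $m$ and $d^{-1}pq$ divide one another; in a left cancellative monoid, mutual divisibility forces equality up to $P^*$, since $a = bs$ and $b = at$ give $1_P = st = ts$ after cancellation, so $s,t \in P^*$. Since $d$ is a common divisor of $p$ and $q$, I write $p = dp'$ and $q = dq'$ with $p',q' \in P$. Computing in the Grothendieck group $P^{-1}P$ one finds $d^{-1}pq = dp'q' \in P$, and one divisibility is immediate: $dp'q' = pq' = qp'$ lies in $pP \cap qP = mP$, so $m$ divides $d^{-1}pq$.

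For the reverse divisibility I would first record that $p'$ and $q'$ are coprime. If $e$ is a common divisor of $p'$ and $q'$, say $p' = ep''$ and $q' = eq''$, then $de$ is a common divisor of $p$ and $q$; maximality of the GCD gives $d \in deP$, so $d = def$ for some $f \in P$, and left cancellation of $d$ yields $ef = 1_P$. In a left cancellative monoid a one-sided inverse is already two-sided, because $e(fe) = (ef)e = e = e\cdot 1_P$ cancels to $fe = 1_P$; hence $e \in P^*$. Thus every common divisor of $p'$ and $q'$ is invertible.

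The crux is then the statement that the least common multiple of coprime elements is their product up to $P^*$. Writing $\ell$ for an LCM of $p'$ and $q'$, so $p'P \cap q'P = \ell P$, the containment $p'q' \in p'P \cap q'P$ gives $p'q' = \ell c$ for some $c \in P$. Using $\ell = p'x = q'y$ and substituting into $p'q' = \ell c$, left cancellation produces $q' = xc$ and $p' = yc$, so $c$ divides both $p'$ and $q'$. By the previous paragraph $c \in P^*$, whence $\ell P^* = p'q'P^*$; that is, $p'q'$ is itself an LCM of the coprime pair $p', q'$.

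Finally I would transport this back. Since $m \in pP \cap qP$, I write $m = pa = qb$; then $dp'a = dq'b$ and left cancellation of $d$ gives $p'a = q'b =: n \in p'P \cap q'P = p'q'P$, so $p'q'$ divides $n$, while $m = pa = d(p'a) = dn$. Hence $dp'q'$ divides $dn = m$, which is the reverse divisibility. Combining the two divisibilities yields $mP^* = dp'q'\,P^* = d^{-1}pq\,P^*$. I expect the coprime-implies-product-LCM step to be the only genuinely substantive point; the remainder is divisibility bookkeeping organized around left cancellation and the defining properties of the GCD and the LCM.
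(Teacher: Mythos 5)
Your proof is correct, but its second half takes a genuinely different route from the paper's. The paper obtains the reverse divisibility directly: since $pq \in pP\cap qP = mP$, write $pq = mc = pac = qbc$, cancel to get $q = ac$ and $p = bc$, so $c$ is a common divisor of $p$ and $q$; the defining property of the GCD gives $d = ce$, whence $m = pa = pd^{-1}qe \in d^{-1}pqP$. You instead factor out the GCD first, writing $p = dp'$ and $q = dq'$, prove that $p'$ and $q'$ are coprime, establish as a separate lemma that the product of coprime elements is their LCM up to $P^*$, and then transport back by cancelling $d$. Both arguments rest on the same mechanics (left cancellation plus the maximality of the GCD), and every step of yours checks out; the trade-off is that your version is longer but isolates a reusable intermediate fact --- your coprime-product lemma is exactly the equivalence $pP\cap qP = pqP \Leftrightarrow \mathrm{GCD}(p,q)\in P^*$ that the paper later invokes in Remark~\ref{rem:Fourier-maps} as a consequence of this proposition, so your route makes that corollary immediate. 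Your opening observation that mutual divisibility in a left cancellative monoid already forces $aP^* = bP^*$ is also a slightly cleaner way to conclude than the paper's closing appeal to the right LCM property.
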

\begin{proof}
As $P$ is abelian and $d^{-1}p,d^{-1}q \in P$, we have $d^{-1}pq \in pP\cap qP = mP$. Conversely, let $a,b \in P$ with $m=pa=qb$. Since $pq \in mP$, there is $c \in P$ such that $pq = mc = pac = qbc$. By left cancellation, this says that $q=ac$ and $p=bc$, that is, $c$ is a common divisor of $p$ and $q$. Thus we get $d =ce$ for some $e \in P$, and hence $m=pd^{-1}qe \in pd^{-1}qP$. As $P$ is right LCM, this shows $mP^* = d^{-1}pqP^*$.
\end{proof}
If $P = \IN^\times$, the claim in the previous proposition is nothing but the well-known equality $nm = {\rm GCD}(n,m) \cdot {\rm lcm}(n, m)$ for all $n,m \in \IN$.

\medskip
The set $P(\perp)\subset P\times P$ and the maps $F_{(p,q)}\colon \CQ(\gxp) \to \CF$ for $(p,q) \in P(\perp)$ introduced in the following remark will be crucial to the proof of Lemma~\ref{lem:Cuntzlemma}, which is the heart of the whole section and goes all the way back to \cite{Cun}*{Proposition 1.10}.

\begin{remark}\label{rem:Fourier-maps}
Let $P(\perp) \doteq \{ (p,q) \in P\times P\mid pP\cap qP = pqP\}$ denote the collection of all \emph{relatively prime pairs} in $P$. The terminology alludes to the fact that $pP\cap qP = pqP$ is equivalent to the GCD of $p$ and $q$ being in $P^*$, see Proposition~\ref{prop:GCD and LCM for abelian right LCM}. For all $(p,q) \in P(\perp)$, the isometries $s_p$ and $s_q$ in $\CQ(\gxp)$ doubly commute, i.e. 
\begin{equation}\label{eq:(p,q) in P(perp) -> doubly commute}
s_p^*s_q=s_qs_p^*,
\end{equation}
% as $\CQ(\gxp)$ is a quotient of the semigroup $C^*$-algebra $C^*(\gxp)$ and hence, 
as easily follows by (II), applied to $g=g_1=g_2=1_G$, $p'=q$ and $q'=p$.

%\begin{equation}\label{eq:(p,q) in P(perp) -> doubly commute}
%s_p^*s_q^{\phantom{*}} = s_p^*s_p^{\phantom{*}}s_p^*s_q^{\phantom{*}}s_q^*s_q^{\phantom{*}} \stackrel{pP\cap qP = pqP}{=} s_p^*s_{pq}^{\phantom{*}}s_{pq}^*s_q^{\phantom{*}} = %s_q^{\phantom{*}}s_p^*.
%\end{equation}
Two pairs $(p,q),(\tilde{p},\tilde{q}) \in P(\perp)$ satisfy $p^{-1}q = \tilde{p}^{-1}\tilde{q}$ if and only if there is $x \in P^*$ such that $\tilde{p}=xp$ and $\tilde{q}=xq$. This relation defines an equivalence relation on $P(\perp)\times P(\perp)$ that we denote by $\sim$. For each $(p,q) \in P(\perp)$, we define a contractive, linear map $F_{(p,q)}\colon \CQ(\gxp) \to \CF$ by $a \mapsto \int_T \gamma_\chi(s_p^{\phantom{*}}as_q^*) \ d\chi$, where the integration is with respect to the normalized Haar measure on the compact abelian group $T$. We observe that 
\begin{equation}\label{eq:F_(p,q) range projections}
F_{(p,q)}(a) = s_p^{\phantom{*}}s_p^*F_{(p,q)}(a)s_q^{\phantom{*}}s_q^* \quad \text{for all } a \in \CQ(\gxp), (p,q) \in P(\perp).
\end{equation}
Moreover, we note that
\begin{equation}\label{eq:F_(p,q) and sim}
s_p^*F_{(p,q)}(a)s_q^{\phantom{*}} = s_{\tilde{p}}^*F_{(\tilde{p},\tilde{q})}(a)s_{\tilde{q}}^{\phantom{*}} \quad \text{whenever } (p,q)\sim (\tilde{p},\tilde{q}). 
\end{equation}
Finally, for every $u_g^{\phantom{*}}s_{p'}^{\phantom{*}}s_{q'}^*u_h^{\phantom{*}}$ with $g,h \in G, p',q' \in P$, there is a unique $[(p,q)] \in P(\perp)/_\sim$ such that 
\[u_g^{\phantom{*}}s_{p'}^{\phantom{*}}s_{q'}^*u_h^{\phantom{*}} = s_p^*s_p^{\phantom{*}}u_g^{\phantom{*}}s_{p'}^{\phantom{*}}s_{q'}^*u_h^{\phantom{*}} s_q^*s_q^{\phantom{*}} = s_p^*F_{(p,q)}(u_g^{\phantom{*}}s_{p'}^{\phantom{*}}s_{q'}^*u_h^{\phantom{*}})s_q^{\phantom{*}},\]
obtained as $p'P \cap q'P = p'pP, p'p=q'q$ (a least common multiple of $p'$ and $q'$). Thus for every $a \in {\rm span}\{ u_g^{\phantom{*}}s_{p'}^{\phantom{*}}s_{q'}^*u_h^{\phantom{*}} \mid g,h \in G, p',q' \in P\}$, there is a uniquely determined finite set $A(a) \subset  P(\perp)/_\sim$ with the property that 
\begin{equation}\label{eq:F_(p,q) reconstruction formula}
\begin{array}{c} a = \sum\limits_{[(p,q)] \in A(a)} s_p^*F_{(p,q)}(a)s_q^{\phantom{*}}, \end{array}
\end{equation} 
which is well defined by \eqref{eq:F_(p,q) and sim}.
\end{remark}

\begin{lemma} \label{lem:Cuntzlemma}
If $a\in \CQ(\gxp)$ satisfies $F_{(p,q)}(a)=0$ for all $(p,q) \in P(\perp)$, then $a=0$.
\end{lemma}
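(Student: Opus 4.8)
The plan is to bypass the reconstruction formula \eqref{eq:F_(p,q) reconstruction formula}, which only applies to the dense $*$-subalgebra ${\rm span}\{u_g^{\phantom{*}}s_{p'}^{\phantom{*}}s_{q'}^*u_h^{\phantom{*}}\}$ where the index set $A(a)$ is finite, and instead to detect the spectral components of $a$ under the gauge action one at a time. Since $T$ is compact abelian with $\widehat{T}=P^{-1}P$, I would introduce, for each $t\in P^{-1}P$, the contractive projection $E_t(a)\doteq\int_T\overline{\chi(t)}\,\gamma_\chi(a)\,d\chi$ onto the degree-$t$ spectral subspace, and recall the standard fact that an element is faithfully recorded by this family: if $E_t(a)=0$ for all $t$, then for every bounded functional $\omega$ the continuous function $\chi\mapsto\omega(\gamma_\chi(a))$ on $T$ has all Fourier coefficients $\omega(E_t(a))$ equal to zero, hence vanishes identically, and evaluating at the identity of $T$ gives $\omega(a)=0$, so $a=0$.

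The heart of the argument is then a short computation identifying each $F_{(p,q)}$ with such a projection. Using $\gamma_\chi(s_p)=\chi(p)s_p$ and $\gamma_\chi(s_q^*)=\overline{\chi(q)}s_q^*$, one obtains
\[
F_{(p,q)}(a)=\int_T\gamma_\chi(s_p^{\phantom{*}}as_q^*)\,d\chi=s_p^{\phantom{*}}\Bigl(\int_T\chi(pq^{-1})\gamma_\chi(a)\,d\chi\Bigr)s_q^*=s_p^{\phantom{*}}E_{p^{-1}q}(a)s_q^*.
\]
Because $s_p$ and $s_q$ are isometries, applying $s_p^*$ on the left and $s_q$ on the right recovers $E_{p^{-1}q}(a)=s_p^*F_{(p,q)}(a)s_q^{\phantom{*}}$ for every $a\in\CQ(\gxp)$ and every $(p,q)\in P(\perp)$. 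In particular, the hypothesis $F_{(p,q)}(a)=0$ forces the spectral component $E_{p^{-1}q}(a)$ to vanish.

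Finally I would check that relatively prime pairs realize every degree. Given $t\in P^{-1}P$, write $t=x^{-1}y$ with $x,y\in P$, let $d$ be a greatest common divisor of $x$ and $y$, and set $p\doteq d^{-1}x$, $q\doteq d^{-1}y$; the defining property of the GCD shows that any common divisor of $p$ and $q$ lies in $P^*$, so $(p,q)\in P(\perp)$ by Proposition~\ref{prop:GCD and LCM for abelian right LCM}, while $t=p^{-1}q$. Combining the three steps, $E_t(a)=s_p^*F_{(p,q)}(a)s_q^{\phantom{*}}=0$ for every $t\in P^{-1}P$, whence $a=0$.

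The main obstacle is conceptual rather than computational: one must resist proving the statement only on the dense subalgebra via \eqref{eq:F_(p,q) reconstruction formula} and then trying to pass to the limit, because $A(a)$ admits no uniform bound and the reconstruction series need not converge for general $a$. Routing everything through the spectral decomposition of the compact gauge action avoids this entirely, since every element --- not merely those in the dense subalgebra --- is determined by its Fourier coefficients; the relatively prime hypothesis on $(p,q)$ enters only through the surjectivity of $(p,q)\mapsto p^{-1}q$ onto $P^{-1}P$.
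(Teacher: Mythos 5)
Your argument is correct. It rests on the same Fourier-analytic principle as the paper's proof --- namely that an element of $\CQ(\gxp)$ is determined by its spectral data under the gauge action of the compact group $T$, and that the maps $F_{(p,q)}$ capture exactly this data as $(p,q)$ ranges over $P(\perp)$ --- but the execution is genuinely different and, in my view, cleaner. The paper fixes a faithful representation $\varphi$, forms the scalar functions $\chi\mapsto\langle\varphi(\gamma_\chi(a))\xi,\eta\rangle$, approximates $a$ by elements $a_k$ of the dense spanning set where the reconstruction formula \eqref{eq:F_(p,q) reconstruction formula} applies, and then argues via uniform convergence of the $f_k$ that the Fourier coefficients of the limit function all vanish. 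You bypass the faithful representation, the matrix coefficients, and the approximation step entirely by proving the operator identity $F_{(p,q)}(a)=s_p^{\phantom{*}}E_{p^{-1}q}(a)s_q^*$, hence $E_{p^{-1}q}(a)=s_p^*F_{(p,q)}(a)s_q^{\phantom{*}}$, for \emph{every} $a\in\CQ(\gxp)$ --- an identity the paper only exploits implicitly on the dense subalgebra. Combined with the standard uniqueness theorem for Fourier coefficients of continuous functions on $T$ (applied to $\chi\mapsto\omega(\gamma_\chi(a))$ for bounded functionals $\omega$) and the observation, also present in the paper, that removing the GCD realizes every element of $P^{-1}P$ as $p^{-1}q$ with $(p,q)\in P(\perp)$, this gives a shorter and more conceptual proof. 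What the paper's route buys in exchange is that it stays entirely within the explicit reconstruction formalism set up in Remark~\ref{rem:Fourier-maps}, which is reused in Lemma~\ref{lem:unitaries in rel commutant}; your spectral-subspace formulation would serve equally well there.
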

\begin{proof}
Suppose $a\in \CQ(\gxp)$ satisfies $F_{(p,q)}(a)=0$ for all $(p,q) \in P(\perp)$. We fix a faithful representation $\varphi\colon \CQ(\gxp) \to \CL(H)$ on some Hilbert space $H$. Precomposing by a gauge automorphism $\varphi \circ \gamma_\chi \colon \CQ(\gxp) \to \CL(H)$ is then still a faithful representation for every $\chi \in T$. Given $\xi,\eta \in H$ with $\|\xi\|=\|\eta\|=1$, define $f\colon T \to \C$ by 
\[f(\chi) \doteq \langle \varphi(\gamma_\chi(a))\xi,\eta\rangle.\]
Pick a sequence $(a_k)_{k \in \N} \subset {\rm span}\{ u_g^{\phantom{*}}s_{p'}^{\phantom{*}}s_{q'}^*u_h^{\phantom{*}} \mid g,h \in G, p',q' \in P\}$ with $a_k \to a$, and define $f_k\colon T \to \C$ by  
\[f_k(\chi) \doteq \langle \varphi(\gamma_\chi(a_k))\xi,\eta\rangle.\]
As $\|f-f_k\|_\infty \leq \|a-a_k\| \to 0$,
% and $T$ is compact, 
$(f_k)_{k \in \N}$ converges uniformly to $f$ on $T$. According to \eqref{eq:F_(p,q) reconstruction formula}, there is a uniquely determined sequence of finite subsets $(A(a_k))_{k \in \N} \subset P(\perp)/_\sim$ such that $a_k = \sum_{[(p,q)] \in A(a_k)} s_p^*F_{(p,q)}(a_k)s_q^{\phantom{*}}$ for all $k \geq 1$. This leads us to 
\[\begin{array}{c}
f_k(\chi) = \sum\limits_{[(p,q)] \in A(a_k)} \langle \varphi(\gamma_\chi(s_p^*F_{(p,q)}(a_k)s_q^{\phantom{*}}))\xi,\eta\rangle = \sum\limits_{[(p,q)] \in A(a_k)} c_{p^{-1}q,k} \chi(p^{-1}q)
\end{array}\] 
for $c_{p^{-1}q,k} \doteq \langle \varphi(s_p^*F_{(p,q)}(a_k)s_q^{\phantom{*}})\xi,\eta\rangle$ as $F_{(p,q)}(a_k) \in \CF$. We set $c_{p^{-1}q,k}\doteq 0$ for $[(p,q)] \notin A(a_k)$. Note that if $[(p_1,q_1)],[(p_2,q_2)] \in A(a_k)$ are distinct, then $p_1^{-1}q_1^{\phantom{1}} \neq p_2^{-1}q_2^{\phantom{1}}$ in the group $P^{-1}P$ as $(p_i,q_i) \in P(\perp)$ for $i=1,2$. Therefore, we can interpret $c_{p^{-1}q,k}$ as the Fourier coefficient of $a_k$ for $p^{-1}q$. Every element $g \in P^{-1}P$ can be described as $p^{-1}q$ for some $(p,q) \in P(\perp)$ by removing the GCD from any given expression as a quotient of two elements from $P$. Since $(f_k)_{k \in \N}$ converges uniformly to $f$, the Fourier coefficients $(c_{p^{-1}q,k})_{k \in \N}$ converge to the Fourier coefficients $c_{p^{-1}q} \doteq \langle \varphi(s_p^*F_{(p,q)}(a)s_q^{\phantom{*}})\xi,\eta\rangle$ of $f$ for all $p^{-1}q \in P^{-1}P$. But then $F_{(p,q)}(a)=0$ for all $(p,q) \in P(\perp)$ forces $c_{p^{-1}q,k} \to c_{p^{-1}q} = 0$ for all $p^{-1}q \in P^{-1}P$, so that $f(\chi) = \lim_{k \to \infty} f_k(\chi) = 0$ for all $\chi \in T$. As $\xi$ and $\eta$ were arbitrary and $\varphi\circ\gamma_\chi$ was faithful, we get $a=0$.  
\end{proof}

We denote by $C^*(P^*)$ the $C^*$-algebra of the abelian group $P^*$, and think of $C^*(P^*)$ as the subalgebra of $\CQ(\gxp)$ generated by the unitaries $s_p, p \in P^*$.

\begin{lemma}\label{lem:unitaries in rel commutant}
Suppose that $P^*$ is finite. If $w\in \CQ(\gxp)$ is a unitary satisfying $w s_p w^*=z_p s_p$ with $z_p\in\IT$ for all $p\in P$, then $w\in C^*(P^*)$ and $z_p=1$ for all $p \in P$.
\end{lemma}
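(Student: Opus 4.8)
The plan is to exploit the Fourier-coefficient machinery developed in Remark~\ref{rem:Fourier-maps} and Lemma~\ref{lem:Cuntzlemma} to pin down $w$. Since $w$ is a unitary in $\CQ(\gxp)$ satisfying $w s_p w^* = z_p s_p$, I first aim to show that all the ``off-diagonal'' Fourier components of $w$ vanish, forcing $w$ to lie in the fixed-point algebra $\CF$, and then to push further and show that $w$ in fact lies in $C^*(P^*)$. The conjugation relation $w s_p = z_p s_p w$ is the engine: it lets me compare $F_{(p',q')}(w)$ before and after conjugating by $s_p$, and the scalars $z_p \in \IT$ contribute only a phase that I can track.

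\textbf{Reducing to $\CF$.} First I would compute how the maps $F_{(p',q')}$ interact with the relation $w s_p = z_p s_p w$. For a single $p$, conjugation by $s_p$ shifts the gauge-degree, so applying $F_{(p',q')}$ to both sides of $s_p^* w s_p = z_p^{-1} w$ (valid since $s_p$ is an isometry and $w s_p w^* = z_p s_p$ rearranges to this on the range of $s_p$) should relate $F_{(p',q')}(w)$ to $F$ applied to a translate of $w$. Because $s_p$ carries gauge-degree $p$ and $w s_p w^* = z_p s_p$ preserves gauge-degree, the unitary $w$ must itself be gauge-invariant: for each $\chi \in T$ one checks $\gamma_\chi(w) s_p \gamma_\chi(w)^* = \gamma_\chi(z_p s_p) = z_p \chi(p)^{-1}\chi(p) s_p$ wait---more carefully, $\gamma_\chi(w)\,\gamma_\chi(s_p)\,\gamma_\chi(w)^* = \gamma_\chi(z_p s_p)$ gives $\gamma_\chi(w) s_p \gamma_\chi(w)^* = z_p s_p$, so $\gamma_\chi(w)$ satisfies the same relations as $w$. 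The cleanest route is therefore to show $F_{(p,q)}(w)=0$ for every $(p,q)\in P(\perp)$ with $p^{-1}q \neq 1_{P^{-1}P}$, i.e.\ for nontrivial gauge-degree, and invoke Lemma~\ref{lem:Cuntzlemma} applied to $w - F_{(1_P,1_P)}(w)$ to conclude $w \in \CF$.

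\textbf{Locating $w$ inside $C^*(P^*)$.} Having placed $w \in \CF = \overline{\rm span}\{u_g s_p s_p^* u_h^* \}$, the next step uses Lemma~\ref{lem:limfixpoint}: the sequence $(s_{p_m}^* w s_{p_m})_{m}$ converges to a scalar $c_w \in \C$. On the other hand, the relation $w s_{p_m} = z_{p_m} s_{p_m} w$ forces $s_{p_m}^* w s_{p_m} = z_{p_m} s_{p_m}^* s_{p_m} w = z_{p_m} w$, so $w = z_{p_m}^{-1} s_{p_m}^* w s_{p_m} \to z_{p_m}^{-1} c_w$; consistency across $m$ pins the phases down and shows $w$ is a scalar multiple of a unitary in $C^*(P^*)$. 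Finiteness of $P^*$ enters here to guarantee that the relevant span is finite-dimensional and that the limiting argument forces $w$ into the genuinely invertible part $C^*(P^*)$ rather than a larger completion. Once $w \in C^*(P^*)$, each $s_p$ with $p \in P^*$ is already a unitary that $w$ must commute with appropriately, and evaluating the relation $w s_p w^* = z_p s_p$ on the abelian group $C^*$-algebra $C^*(P^*)$ gives $z_p = 1$ for all $p$, since a character of the abelian group can only conjugate a group unitary back to itself trivially.

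\textbf{Main obstacle.} The delicate point I expect is the reduction to $\CF$: carefully verifying that the conjugation relation kills exactly the nonzero-degree Fourier components and does not merely permute them. One must ensure that the scalar $z_p$ does not conspire with the gauge-shift to leave some off-diagonal component invariant. I would handle this by fixing $(p,q) \in P(\perp)$ with $p \neq q$ and computing $F_{(p,q)}(w)$ directly from $s_r^* w s_r = z_r^{-1} w$ for a suitable $r$ large enough (chosen from the directed sequence $(p_n)$) so that the gauge-degree of $F_{(p,q)}(w)$ is strictly decreased, iterating until the support becomes incompatible with $w$ being bounded, exactly as in the spirit of \cite{Cun}*{Proposition~1.10}. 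The finiteness of $P^*$ then guarantees the final descent terminates in the finite-dimensional subalgebra $C^*(P^*)$.
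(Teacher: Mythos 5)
Your overall plan (use the Fourier maps $F_{(p,q)}$ together with Lemma~\ref{lem:Cuntzlemma} to locate $w$, then finish with Lemma~\ref{lem:limfixpoint} and commutativity of $P$) is the right one, but your first reduction contains a genuine error. You claim that $F_{(p,q)}(w)=0$ for every $(p,q)\in P(\perp)$ with $p^{-1}q\neq 1_{P^{-1}P}$, hence that $w\in\CF$. This is false whenever $P^*\neq\{1_P\}$: for $x\in P^*\setminus\{1_P\}$ the element $w=s_x$ is a unitary satisfying $ws_pw^*=s_p$ for all $p\in P$ (so it meets the hypothesis with $z_p=1$), yet $\gamma_\chi(s_x)=\chi(x)s_x$ shows $s_x\notin\CF$, and indeed $F_{(1_P,x)}(s_x)=\int_T\gamma_\chi(s_x^{\phantom{*}}s_x^*)\,d\chi=1\neq 0$ even though $1_P^{-1}x\neq 1$. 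Worse, if your reduction did hold, your second step would prove $w\in\C$: from $s_{p_m}^*ws_{p_m}^{\phantom{*}}=z_{p_m}w$ and Lemma~\ref{lem:limfixpoint} you would get that $z_{p_m}w$ converges to a scalar, hence $w$ is a scalar. That conclusion is strictly stronger than the lemma and is contradicted by the same example $w=s_x$. The correct dichotomy is not ``gauge degree zero versus nonzero'' but ``$p,q\in P^*$ versus not'': one shows $F_{(p,q)}(w)=0$ whenever $p\notin P^*$ or $q\notin P^*$, and the finitely many surviving components with $p,q\in P^*$ assemble, via Lemma~\ref{lem:Cuntzlemma}, into an element of $C^*(P^*)$ which need not be scalar and need not be gauge-invariant.

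The mechanism you propose for killing the unwanted components (iterating a degree-decreasing conjugation ``until the support becomes incompatible with boundedness'') also does not describe a working argument. What does work is the following: the relation $ws_pw^*=z_ps_p$ together with the double commutation \eqref{eq:(p,q) in P(perp) -> doubly commute} yields the covariance identity $s_r^*F_{(p,q)}(w)s_r^{\phantom{*}}=z_rF_{(p,q)}(w)$ for all $r\in P$ and $(p,q)\in P(\perp)$; since $F_{(p,q)}(w)\in\CF$, Lemma~\ref{lem:limfixpoint} forces $F_{(p,q)}(w)$ to be a scalar; and then \eqref{eq:F_(p,q) range projections}, namely $F_{(p,q)}(w)=s_p^{\phantom{*}}s_p^*F_{(p,q)}(w)s_q^{\phantom{*}}s_q^*$, rules out a nonzero scalar unless both $s_p^{\phantom{*}}s_p^*$ and $s_q^{\phantom{*}}s_q^*$ equal $1$, i.e.\ unless $p,q\in P^*$. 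Your closing steps (the use of $s_{p_m}^*\cdot s_{p_m}^{\phantom{*}}$ and of commutativity of $P$ to deduce $z_p=1$ once $w\in C^*(P^*)$) are sound, but they only become applicable after this corrected reduction; finiteness of $P^*$ enters simply to make the sum $\sum s_p^*F_{(p,q)}(w)s_q^{\phantom{*}}$ over classes with $p,q\in P^*$ a finite one.
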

\begin{proof}
For every $r \in P$, \eqref{eq:(p,q) in P(perp) -> doubly commute} allows us to compute
\[s_r^*\gamma_\chi(s_p^{\phantom{*}}ws_q^*)s_r^{\phantom{*}} = \overline{z}_p\overline{z}_q\gamma_\chi(s_r^*s_q^*ws_p^{\phantom{*}}s_r^{\phantom{*}}) = z_r\overline{z}_p\overline{z}_q\gamma_\chi(s_q^*s_r^*s_r^{\phantom{*}}ws_p^{\phantom{*}}) = z_r\gamma_\chi(s_p^{\phantom{*}}ws_q^*)\]
for all $(p,q) \in P(\perp)$. Therefore we get $s_r^*F_{(p,q)}(w)s_r^{\phantom{*}} = z_r F_{(p,q)}(w)$ for all $r \in P$ and all $(p,q) \in P(\perp)$. As $F_{(p,q)}(w) \in \CF$ by definition, Lemma~\ref{lem:limfixpoint} implies that 
\[(s_{p_m}^*F_{(p,q)}(w)s_{p_m}^{\phantom{*}})_{m \in \N} = (z_{p_m})_{m \in \N} F_{(p,q)}(w)\] 
converges to a limit in $\C$. However, for every $(p,q) \in P(\perp)$ with $p \notin P^*$ or $q \notin P^*$, \eqref{eq:F_(p,q) range projections} forces $F_{(p,q)}(w) = 0$ as $s_p^{\phantom{*}}s_p^*$ or $s_q^{\phantom{*}}s_q^*$ is a proper subprojection of $1$ and hence not in $\C$. Since $P^*$ is finite, we can consider the element $a\doteq w - \sum_{\substack{ [(p,q)] \in P(\perp)/_\sim, \\ p,q \in P^*}} s_p^*F_{(p,q)}(w)s_q^{\phantom{*}}$, which then satisfies $F_{(p,q)}(a) = 0$ for all $(p,q) \in P(\perp)$. Lemma~\ref{lem:Cuntzlemma} now implies
\[\begin{array}{c} w = \sum\limits_{\substack{ [(p,q)] \in P(\perp)/_\sim, \\ p,q \in P^*}} s_p^*F_{(p,q)}(w)s_q^{\phantom{*}} \in C^*(P^*)\end{array}\] 
as $F_{(p,q)}(w) \in \C$ by the previous part. Since $P$ is abelian, $w$ commutes with every $s_p$, so $z_p=1$ for all $p \in P$.
\end{proof}

We are now ready to prove the announced theorem.

\begin{thm}\label{thm:rel commutant}
Suppose that $P^*$ is finite. The relative commutant $C^*(\{s_p \mid p \in P\})' \cap \CQ(\gxp)$ equals $C^*(P^*)$. In particular, if there are no non-trivial elements in $P^*$, then $C^*(\{s_p \mid p \in P\})' \cap \CQ(\gxp)=\IC$.
\end{thm}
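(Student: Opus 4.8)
The plan is to show both inclusions $C^*(P^*) \subseteq C^*(\{s_p \mid p \in P\})' \cap \CQ(\gxp)$ and $C^*(\{s_p \mid p \in P\})' \cap \CQ(\gxp) \subseteq C^*(P^*)$. The first inclusion is the easy direction: since $P$ is abelian, any $s_r$ with $r \in P^*$ is a unitary commuting with every $s_p$, so $C^*(P^*)$ is visibly contained in the relative commutant. The substance of the theorem is the reverse inclusion, and I expect to reduce it almost immediately to Lemma~\ref{lem:unitaries in rel commutant}.

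First I would take an arbitrary $x \in C^*(\{s_p \mid p \in P\})' \cap \CQ(\gxp)$ and seek to write it as a linear combination of unitaries satisfying the hypothesis of Lemma~\ref{lem:unitaries in rel commutant}. The cleanest route is a polar/spectral decomposition argument: since the relative commutant is a $C^*$-algebra (it is closed under adjoints because each $s_p s_p^*$ is a projection and $x^*$ commutes with the $s_p$ whenever $x$ does, using that the $s_p$ are isometries and $P$ is abelian), it suffices to treat self-adjoint $x$, and then to express $x$ via the continuous functional calculus in terms of unitaries $w = \exp(itx)$ lying in the same relative commutant. Each such unitary $w$ commutes with every $s_p$, so in particular $w s_p w^* = s_p$, which is exactly the hypothesis $w s_p w^* = z_p s_p$ of Lemma~\ref{lem:unitaries in rel commutant} with $z_p = 1$. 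The lemma then yields $w \in C^*(P^*)$. Since $C^*(P^*)$ is norm-closed and $x$ is recovered from the $w = \exp(itx)$ by an integral or limiting procedure (for instance, $x = \tfrac{1}{i}\,\tfrac{d}{dt}\big|_{t=0} \exp(itx)$, approximated by finite differences that stay in $C^*(P^*)$), we conclude $x \in C^*(P^*)$, which completes the reverse inclusion.

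The main obstacle is making the passage from the commuting element $x$ to unitaries in the relative commutant fully rigorous while staying inside $C^*(P^*)$ at every step. The concern is that Lemma~\ref{lem:unitaries in rel commutant} is stated only for unitaries of the special form $w s_p w^* = z_p s_p$, so I must ensure that the unitaries I extract from $x$ genuinely satisfy this. The safest formulation is to observe that if $x$ is self-adjoint and commutes with each $s_p$, then $w_t \doteq \exp(itx)$ is a unitary in $\CQ(\gxp)$ commuting with each $s_p$, hence $w_t s_p w_t^* = s_p$; applying the lemma gives $w_t \in C^*(P^*)$ for every $t \in \R$. Because $C^*(P^*)$ is a closed subalgebra, its intersection with the commutant is also closed, and $x = \lim_{t \to 0} (w_t - 1)/(it)$ lands in $C^*(P^*)$. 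A decomposition $x = \Re(x) + i\,\Im(x)$ handles the general case.

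Finally, for the ``in particular'' clause, when $P^*$ is trivial we have $C^*(P^*) = C^*(\{1_P\}) = \IC \cdot 1$, so the relative commutant collapses to the scalars $\IC$; this is immediate from the main statement once $P^* = \{1_P\}$ is substituted.
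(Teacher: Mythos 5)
Your proposal is correct and follows essentially the same route as the paper: the paper notes that the relative commutant, being a unital $C^*$-algebra, is the linear span of its unitaries, and then applies Lemma~\ref{lem:unitaries in rel commutant} (with $z_p=1$) to each such unitary, exactly as you do. Your use of the exponentials $\exp(itx)$ and a limiting argument is just a slightly more hands-on implementation of the same reduction to unitaries, and the easy inclusion $C^*(P^*)\subseteq C^*(\{s_p \mid p \in P\})'\cap\CQ(\gxp)$ is handled identically via commutativity of $P$.
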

\begin{proof}
Being a unital $C^*$-algebra, the relative commutant $C^*(\{s_p \mid p \in P\})' \cap \CQ(\gxp)$ is the linear span of its unitaries. According to Lemma~\ref{lem:unitaries in rel commutant}, every such unitary belongs to $C^*(P^*)$. The reverse inclusion is clear as $P$ is abelian.
\end{proof}

\begin{remark}\label{rem:rel commutant finite P*}

The assumption in Theorem~\ref{thm:rel commutant} that the subgroup $P^*$ of the invertible elements in $P$ needs to be finite
is likely to be dispensable. For instance, this hypothesis could be got rid of by establishing a Fej\'{e}r-type theorem 
for the series  
\[\begin{array}{c} 
% w = 
\sum\limits_{\substack{ [(p,q)] \in P(\perp)/_\sim, \\ p,q \in P^*}} s_p^*F_{(p,q)}(w)s_q^{\phantom{*}} \end{array}\] 
which will in general fail to be norm convergent, even if $w$ is as above. Nevertheless, we would expect the series to be Ces\`{a}ro summable once a suitable way to count the elements of $P^*$ is introduced. This aspect lies outside the scope of the present work, but we plan to address it 
% elsewhere.
in future studies.
\end{remark}

\begin{remark}
In the context of integral dynamics, see Example~\ref{ex:integral dynamics}, the relative commutant $C^*(\{s_p \mid p \in T\})'\cap \CQ(\IZ\rtimes P)$ for $T \subset S$ might also be worth computing. We would expect it to equal  $C^*(\{s_p \mid p \in S\setminus T\})$.
\end{remark}

\section{Extendability of Bogolubov automorphisms}\label{sec:Bogolubov}
For every $2 \leq p < \infty$ the $C^*$-algebra  $\CQ_p= \CQ(\IZ \rtimes_{\theta_p} \IN)$ is by now known as the $p$-adic ring $C^*$-algebra. Inside each $\CQ_p$, there is a copy of the Cuntz algebra $\CO_p$, generated by $(u^ i s_p)_{0 \leq i \leq p-1}$. For convenience, let us from now on denote these generating isometries $u^is_p$ of $\CO_p\subset \CQ_p$ by $T_i$. The case $p=2$ has already been studied in detail in \cites{LarsenLi,ACR}. A special feature of $\CQ_2$ is that the two isometries $T_0$ and $T_1$ are intertwined by $u$, i.e. $T_0u=uT_1$. For a general $p$, however, $T_0$ and $T_{p-1}$ are still intertwined by $u$, but we cannot expect all the isometries $T_i$ to be unitarily equivalent to one another. For instance, consider $p=3$ and the faithful canonical representation $\pi\colon \CQ_3 \to \CL(\ell^2(\Z))$. Here, both $\pi(T_0)$ and $\pi(T_2)$ have eigenvalue $1$ corresponding to the eigenvectors $\xi_0$ and $\xi_{-1}$, respectively. However, the point spectrum of $\pi(T_1)$ is empty. Indeed, if $\xi = \sum_{m\in\Z} c_m \xi_m \in \ell^2(\IZ)$ satisfies $\pi(T_1)\xi = \lambda\xi$ for some $\lambda \in \IT$, then $\sum_{m\in\Z} c_m \xi_{3m+1}=\sum_{m\in\Z} \lambda c_m \xi_m$. Therefore, $|c_m|=|c_{3m+1}|$ for all if $m \in \Z$, which forces $\xi=0$.

We start our discussion by introducing a distinguished representation of $\CO_p$, occasionally referred to as the \emph{interval picture of $\CO_p$}, which will come in useful in Lemma~\ref{lem:extensibility of Bogolubov automorphisms}. For $2 \leq p < \infty$, we define a map $\sigma\colon\CO_p \to \CL(L^2([0,1]))$ by $\sigma(T_i)(f)(t) = \sqrt{p} f \circ h_i^{-1}(t)$ for $t \in [i/p,(i+1)/p] $ and 0 otherwise, for $f\in L^2([0,1])$ and $i=0, \ldots, p-1$, where $h_i$ is the compression
\[\begin{array}{rcl}
h_i\colon [0,1] &\to& [i/p,(i+1)/p] \\
t &\mapsto& (i+t)/p
\end{array}\]
and the Hilbert space $L^2([0,1])$ is defined w.r.t. the Lebesgue measure. Observing that the adjoint of $\sigma(T_i)$ is given by $\sigma(T_i)^*(f) = \frac{1}{\sqrt{p}} f\circ h_i $, we see that $\sigma(T_i)^*\sigma(T_i) = \delta_{i,j} \id_{L^2([0,1])}$ and $\sigma(T_i)\sigma(T_i)^* = \id_{L^2([i/p,(i+1)/p])}$. Hence $\sigma$ defines a representation of $\CO_p$.

\begin{remark}\label{rem:T_p-1 pure means at most one extension}
If $\varphi\colon \CO_p \to \CL(\CH)$ is a unital representation on some Hilbert space $\CH$ such that $\varphi(T_{p-1})$ is a pure isometry (namely the range projections of its powers converge strongly to $0$), then $\varphi$ has at most one extension to a representation of $\CQ_p$. Obviously, any extension is completely determined by the image of the unitary $u$. 
Now if $U,W \in \CU(\CH)$ are such that either of them yields an extension of $\varphi$ to a representation of $\CQ_p$, then $W\varphi(T_i) = \varphi(T_{i+1}) = U\varphi(T_i)$ for all $0\leq i \leq p-2$, $W\varphi(T_{p-1}) = \varphi(T_0)W$, and $U\varphi(T_{p-1}) = \varphi(T_0)U$ lead to 
\[\begin{array}{c} U-W = \lim\limits_{k \to \infty} \sum\limits_{|\alpha|=k}(U-W)\varphi(T_{\alpha})\varphi(T_{\alpha})^* = \lim\limits_{k \to \infty} (U-W)\varphi(T_{p-1})^{k}\varphi(T_{p-1})^{*k} =0, \end{array}\]
where $\alpha$ ranges over multi-indices of length $k$ in the letters $\{0,1,\ldots,p-1\}$ and $T_\alpha = T_{\alpha_1}T_{\alpha_2}\cdots T_{\alpha_k}$, because $\varphi(T_{p-1})$ is pure. %Moreover, we note that if $\varphi(T_{p-1})$ is pure, a unitary $U$ defines an extension of $\varphi$ from $\CO_p$ to $\CQ_p$ if and only if 
%\[\begin{array}{lll}
%U\varphi(T_i) = \varphi(T_{i+1}) \text{ for } 0 \leq i \leq p-2 &\text{and}& U\varphi(T_{p-1})\varphi(T_{p-1})^*U^* + \sum\limits_{i=1}^{p-1} \varphi(T_i)\varphi(T_i)^* = 1.
%\end{array}\]
\end{remark}

\begin{proposition}\label{prop:interval rep from O_p to Q_p}
For $2 \leq p < \infty$, the isometries $(\sigma(T_i))_{0 \leq i \leq p-1}$ are pure and the representation $\sigma$ of $\CO_p$ extends uniquely to a representation $\widetilde{\sigma} \colon \CQ_p \to \CL(L^2([0,1]))$. Moreover, $\widetilde{\sigma}$ is not unitarily equivalent to the canonical representation $\pi\colon \CQ_p \to \CL(\ell^2(\Z))$. 
\end{proposition}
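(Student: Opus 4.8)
The plan is to dispatch the three assertions in turn---purity of the generating isometries, existence and uniqueness of $\widetilde\sigma$, and inequivalence with $\pi$---with purity doing the heavy lifting for both the uniqueness statement and the final claim. For the purity claim, observe that $\sigma(T_i)\sigma(T_i)^* = \id_{L^2([i/p,(i+1)/p])}$, so the range projection $P_n \doteq \sigma(T_i)^n\sigma(T_i)^{*n}$ is the projection onto $L^2(J_n)$, where $J_n \doteq h_i^n([0,1])$ is a nested interval of length $p^{-n}$ shrinking to the fixed point $i/(p-1)$ of the contraction $h_i$. For every $f \in L^2([0,1])$ one then has $\|P_n f\|^2 = \int_{J_n}|f|^2\,dt \to 0$ by absolute continuity of the integral, whence $P_n \to 0$ strongly and each $\sigma(T_i)$, in particular $\sigma(T_{p-1})$, is pure.

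Uniqueness of the extension is then immediate from Remark~\ref{rem:T_p-1 pure means at most one extension}: purity of $\sigma(T_{p-1})$ forces $\sigma$ to admit at most one extension to a representation of $\CQ_p$. For existence I would set $\widetilde\sigma(s_p) \doteq \sigma(T_0)$ and $\widetilde\sigma(u) \doteq U$, with $U$ the operator displayed in the statement, and verify that $(\sigma(T_0), U)$ satisfies the defining relations of $\CQ_p$, keeping in mind that $T_0 = s_p$ and $T_i = u^i s_p$. It suffices to check that (i) $U$ is unitary, which holds because its underlying point transformation of $[0,1]$ is a measure-preserving bijection; (ii) $U\sigma(T_i) = \sigma(T_{i+1})$ for $0 \le i \le p-2$; and (iii) $U\sigma(T_{p-1}) = \sigma(T_0)U$. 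Condition (ii) is exactly what ensures $\widetilde\sigma(T_i) = U^i\sigma(T_0) = \sigma(T_i)$, so that $\widetilde\sigma$ genuinely restricts to $\sigma$ on $\CO_p$; granting (ii), the Cuntz partition $\sum_{i=0}^{p-1} U^i\sigma(T_0)\sigma(T_0)^*U^{-i} = \sum_i \sigma(T_i)\sigma(T_i)^* = 1$ is automatic, and the relation $s_pu = u^ps_p$ reads $\sigma(T_0)U = U^p\sigma(T_0) = U\sigma(T_{p-1})$, i.e.\ precisely (iii). Each of (ii) and (iii) is verified by evaluating both sides on the subintervals $[i/p,(i+1)/p]$ and tracking how $U$ transports a function supported there. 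The step requiring genuine care---the main obstacle---is (iii): it encodes the ``carry'' of $s_pu = u^ps_p$ across the top subinterval $[(p-1)/p,1]$, and it is exactly this relation that forces $U$ to have infinite order so that $u^p \neq 1$ is respected; this is where a naive guess can fail, so I would check (iii) meticulously from the piecewise definition of $U$.

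Finally, for the ``in particular'' assertion I would use purity once more. Since $\widetilde\sigma(s_p) = \sigma(T_0)$ is a pure isometry, it has empty point spectrum: any eigenvector would lie in $\bigcap_n \range(\sigma(T_0)^n) = \{0\}$. In the canonical representation, by contrast, $\pi(s_p)\xi_0 = \xi_{p\cdot 0} = \xi_0$, so $1$ belongs to the point spectrum of $\pi(s_p)$ (as already noted for $\pi(T_0)$ in the case $p=3$). A unitary equivalence $\widetilde\sigma \cong \pi$ would intertwine $\widetilde\sigma(s_p)$ with $\pi(s_p)$ and hence preserve point spectra, which is impossible. Therefore $\widetilde\sigma$ is not unitarily equivalent to $\pi$.
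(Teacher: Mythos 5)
Your skeleton coincides with the paper's: purity of the $\sigma(T_i)$ from the fact that $\sigma(T_i)^n\sigma(T_i)^{*n}$ projects onto $L^2$ of an interval of length $p^{-n}$, uniqueness of the extension via Remark~\ref{rem:T_p-1 pure means at most one extension}, and inequivalence with $\pi$ by comparing point spectra of the images of a generating isometry (you use $s_p$ and $\xi_0$, the paper uses $T_{p-1}$ and $\xi_{-1}$). Those three parts are correct and essentially identical to the paper's treatment.

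The gap is exactly the step you flagged and then deferred, namely (iii): $U\sigma(T_{p-1})=\sigma(T_0)U$. Your own observation is the key: (ii) and (iii) together force $U^p\sigma(T_0)=\sigma(T_0)U\neq\sigma(T_0)$, so $U$ must have infinite order. But the operator $U$ displayed in the statement is the Koopman operator of the rotation by $1/p$ on $[0,1]$ with endpoints identified, and it visibly satisfies $U^p=1$; carrying out the ``meticulous check'' of (iii) therefore cannot succeed. Concretely, the piecewise computation gives $U\sigma(T_{p-1})=\sigma(T_0)$ rather than $\sigma(T_0)U$, and since $\sigma(T_0)$ is isometric these agree only if $U=1$. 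There is also a structural obstruction independent of any computation: $\CQ_p$ is simple and $u$ generates a copy of $C^*(\Z)\cong C(\IT)$, so every nonzero representation of $\CQ_p$ is faithful and must send $u$ to a unitary of infinite order --- no representation can send $u$ to a rotation of order $p$. (The paper's own proof verifies $U\sigma(T_i)=\sigma(T_{i+1})$ for $i\leq p-2$ and $U\sigma(T_{p-1})=\sigma(T_0)$ and then asserts relation (I); relation (I) is precisely your (iii), so the published argument has the same defect, and your insistence on checking (iii) itself is the more careful route.) To complete the existence part one must replace the rotation by the Koopman operator of the $p$-adic odometer (von Neumann--Kakutani adding machine) on $[0,1]$, i.e. the strong limit $U=\sum_{k\geq0}\sigma(T_0)^k\bigl(\sum_{i=0}^{p-2}\sigma(T_{i+1})\sigma(T_i)^*\bigr)\sigma(T_{p-1})^{*k}$, which translates by $1/p$ on $[0,(p-1)/p)$ and performs a carry on the top interval; with that $U$ both (ii) and (iii) do hold and the rest of your argument goes through unchanged.
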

\begin{proof}
We start by observing that the isometry $\sigma(T_{i})$ is pure since $\sigma(T_i)^k\sigma(T_i)^{*k}$ is the projection onto $L^2(I)$, where $I \subset [0,1]$ is an interval of length $p^{-k}$. According to Remark~\ref{rem:T_p-1 pure means at most one extension} $\sigma$ thus admits at most one extension to a representation of $\CQ_p$. In order to obtain an extension $\widetilde{\sigma}$, we note that the each of the following two families of mutually orthogonal projections  %partition $L^2([0,1])$, that is, they 
\[\{\sigma(T_{p-1}^j T_i^{\phantom{*}} T_i^* T_{p-1}^{*j}) \mid 0\leq i \leq p-2, j\geq 0\} \quad\text{and}\quad \{\sigma(T_0^j T_i^{\phantom{*}} T_i^* T_0^{*j}) \mid 1 \leq i \leq p-1, j\geq 0\}\] 
sums up to the identity due to the pureness of $T_{p-1}$ and $T_0$, respectively. We then define $U \in \CL(L^2([0,1]))$ by 
\begin{equation}\label{eq:u for sigma}
U\sigma(T_{p-1}^j T_i^{\phantom{*}} T_i^* T_{p-1}^{*j}) \doteq \sigma(T_{0}^j T_{i+1}^{\phantom{*}}T_i^* T_{p-1}^{*j}) \quad \text{for } 0\leq i \leq p-2, j\geq 0.
\end{equation}
As 
\[\begin{array}{c}
UU^* = \sum\limits_{\substack{1\leq i \leq p-1,\\ j \geq 0}} \sigma(T_0^j T_i^{\phantom{*}} T_i^* T_0^{*j}) = 1, \quad\text{and} \vspace*{2mm}\\
U^*U = U^*\sum\limits_{\substack{1\leq i \leq p-1,\\ j \geq 0}} \sigma(T_0^j T_i^{\phantom{*}} T_i^* T_0^{*j})U = \sum\limits_{\substack{0\leq i \leq p-2,\\ j \geq 0}} \sigma(T_{p-1}^j T_i^{\phantom{*}} T_i^* T_{p-1}^{*j}) = 1,
\end{array}\] 
the operator $U$ is a unitary. For $i=0,\ldots,p-2$, we clearly have $U\sigma(T_i) = \sigma(T_{i+1})$ by \eqref{eq:u for sigma}, and $U\sigma(T_{p-1}) = \sigma(T_0)U$ follows from
\[\begin{array}{c}
U\sigma(T_{p-1})U^* = U \sum\limits_{\substack{0\leq i \leq p-2,\\ j \geq 0}} \sigma(T_{p-1}^{j+1} T_i^{\phantom{*}} T_i^* T_{p-1}^{*j})U^* = \sum\limits_{\substack{1\leq i \leq p-1,\\ j \geq 0}} \sigma(T_0^{j+1} T_i^{\phantom{*}} T_i^* T_0^{*j}) = \sigma(T_0).
\end{array}\]
Finally, the representation $\widetilde{\sigma}$ is not unitarily equivalent to $\pi$ because the point spectrum of the pure isometries is empty, whereas $\pi(T_{p-1})$ has eigenvalue $1$ corresponding to the eigenspace spanned by $\xi_{-1} \in \ell^2(\Z)$.
\end{proof}

\begin{remark}
We provide an explicit description of the operator $U$ for $p=2$. As it turns out to be defined piecewise, we need to set some notation.
We define $I_0\doteq[0,\frac{1}{2}]$, $I_k\doteq[\sum_{l=1}^k\frac{1}{2^l},\sum_{l=1}^{k+1}\frac{1}{2^l}]$, $k\geq 1$, and
$J_k\doteq[\frac{1}{2^{j+1}}, \frac{1}{2^{j}}]$, $k\geq 0$.
Given any $f\in L^2([0,1])$ we can write $f=\sum_{k=0}^\infty f_k$, where $f_k= f\chi_{I_k}$. Then $Uf$ is given by the function $g=\sum_{k=0}^\infty g_k$, where $g_k = g\chi_{J_k}$ with
$$
\begin{cases} 
g_0(s)=f_0(s-\frac{1}{2}) & s	\in [\frac{1}{2},1] \\
g_1(s)=f_1(s+\frac{1}{4}) & s	\in [\frac{1}{4},\frac{1}{2}] \\
g_k(s)=f_k(s+\sum_{l=1}^{k-1}\frac{1}{2^l}+ \frac{1}{2^{k+1}}) & s	\in J_k, k\geq 2 \\
 \end{cases}
$$
\end{remark}

In the sequel, we shall refer to the representation $\widetilde{\sigma}\colon \CQ_p \to \CL(L^2([0,1]))$ from Proposition~\ref{prop:interval rep from O_p to Q_p} as the \emph{interval picture of $\CQ_p$}.\vspace*{3mm}\\

A distinguished family of automorphisms of $\CO_p$ is given by the Bogolubov automorphisms $\lambda_A$ associated with a unitary matrix $A=(a_{i,j})\in \CU_p(\IC)$, where $\lambda_A(T_i) \doteq \sum_{j=0}^{p-1} a_{j,i} T_j$. As $\lambda_A\circ\lambda_B=\lambda_{AB}$, the map $\CU_p(C)\ni A\mapsto\lambda_A\in\Aut(\CO_p)$ is a representation of the group $\CU_p(\IC)$. Gauge automorphisms are obviously a special case of Bogolubov automorphisms, i.e. those coming from diagonal matrices with entries in $\IT$. The aim of this section is to determine which Bogolubov automorphisms of $\CO_p$ extend to automorphisms of $\CQ_p$. In addition, we show that all these Bogolubov automorphisms admit a unique extension to an automorphism of $\CQ_p$, see Theorem~\ref{thm:extensible Bogolubov autos}: The group of extendible Bogolubov automorphisms is generated by the gauge automorphisms and the exchange automorphism described in  Definition~\ref{def:exchange automorphism}. Moreover, we show that the extensions of all nontrivial gauge automorphisms of $\CO_p$ are outer automorphisms of $\CQ_p$, see Theorem~\ref{thm:gaugeQpouter}.

\begin{lemma}\label{lem:extensibility of Bogolubov automorphisms}
For all $2\leq p < \infty$, every Bogolubov automorphism $\lambda_A$ of $\CO_p$ admits at most one extension $\widetilde{\lambda_A}$ to an automorphism of $\CQ_p$.
\end{lemma}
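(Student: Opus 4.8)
The plan is to reduce the uniqueness statement to a rigidity property of the inclusion $\CO_p \subset \CQ_p$, and then to feed that into Remark~\ref{rem:T_p-1 pure means at most one extension} via the interval picture of Proposition~\ref{prop:interval rep from O_p to Q_p}. Suppose $\widetilde{\lambda_A}$ and $\widetilde{\lambda_A}'$ are two extensions of $\lambda_A$ to automorphisms of $\CQ_p$. Since $\widetilde{\lambda_A}'$ restricts on $\CO_p$ to the automorphism $\lambda_A$ of $\CO_p$, it maps $\CO_p$ onto $\CO_p$ and is invertible, so the composition $\beta \doteq \widetilde{\lambda_A} \circ (\widetilde{\lambda_A}')^{-1}$ is an automorphism of $\CQ_p$ whose restriction to $\CO_p$ is the identity. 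Thus it suffices to prove the following claim: every unital endomorphism $\beta$ of $\CQ_p$ with $\beta|_{\CO_p} = \id_{\CO_p}$ equals $\id_{\CQ_p}$; applying this to the $\beta$ above then yields $\widetilde{\lambda_A} = \widetilde{\lambda_A}'$.

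To prove the claim I would work inside the interval representation. By Proposition~\ref{prop:interval rep from O_p to Q_p} the representation $\sigma\colon \CO_p \to \CL(L^2([0,1]))$ has $\sigma(T_{p-1})$ pure and admits a unique extension $\widetilde{\sigma}\colon \CQ_p \to \CL(L^2([0,1]))$; moreover $\widetilde{\sigma}$ is faithful because $\CQ_p$ is simple (it is a UCT Kirchberg algebra). Now consider the two unital representations $\widetilde{\sigma}$ and $\widetilde{\sigma}\circ\beta$ of $\CQ_p$ on $L^2([0,1])$. Both restrict on $\CO_p$ to $\sigma$, since $\widetilde{\sigma}|_{\CO_p}=\sigma$ by construction and $(\widetilde{\sigma}\circ\beta)|_{\CO_p} = \widetilde{\sigma}\circ(\beta|_{\CO_p}) = \widetilde{\sigma}\circ\id_{\CO_p} = \sigma$. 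As $\sigma(T_{p-1})$ is pure, Remark~\ref{rem:T_p-1 pure means at most one extension} forces $\widetilde{\sigma} = \widetilde{\sigma}\circ\beta$, and faithfulness of $\widetilde{\sigma}$ then gives $\beta = \id_{\CQ_p}$, completing the argument.

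The step I expect to be the genuine obstacle — and the reason for the $\beta$-reduction rather than a naive application of the Remark — is the choice of representation in which to invoke purity. One cannot simply compose a faithful representation with $\lambda_A$: the relevant isometry would be $\sigma(\lambda_A(T_{p-1}))$, which need \emph{not} be pure (for balanced coefficients it becomes, up to normalization, the Koopman operator of the $\times p$ map on $[0,1]$ and fixes the constants), so the hypothesis of Remark~\ref{rem:T_p-1 pure means at most one extension} fails. The point of passing to $\beta = \widetilde{\lambda_A}\circ(\widetilde{\lambda_A}')^{-1}$ is precisely that $\beta$ fixes $\CO_p$ pointwise, so the isometry whose purity is needed is the \emph{original} generator $\sigma(T_{p-1})$, which is pure by Proposition~\ref{prop:interval rep from O_p to Q_p}. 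The remaining verifications — that $\widetilde{\sigma}$ is faithful and that restriction of $\widetilde{\sigma}\circ\beta$ to $\CO_p$ recovers $\sigma$ — are routine once this reduction is in place.
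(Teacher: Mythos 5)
Your proof is correct, and it takes a genuinely different route from the paper. The paper applies Remark~\ref{rem:T_p-1 pure means at most one extension} directly to $\varphi\circ\lambda_A$ for a representation $\varphi$ chosen according to $A$: if the last column of $A$ is concentrated in a single entry it uses the interval picture, and otherwise it uses the canonical representation $\pi$, where a combinatorial estimate $\|\pi(\lambda_A(T_{p-1}^kT_{p-1}^{*k}))\xi_m\|\leq M^k$ with $M=\max_i|a_{i,p-1}|<1$ shows that $\pi(\lambda_A(T_{p-1}))$ is pure. Your observation that $\sigma(\lambda_A(T_{p-1}))$ can fail to be pure (e.g.\ for balanced coefficients) is exactly the issue that forces this case distinction in the paper. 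Your reduction to the automorphism $\beta=\widetilde{\lambda_A}\circ(\widetilde{\lambda_A}')^{-1}$ fixing $\CO_p$ pointwise sidesteps it entirely: the isometry whose purity is needed is then the untwisted $\sigma(T_{p-1})$, so the interval picture alone suffices and no estimate is required; as a bonus you isolate the clean rigidity statement that any unital endomorphism of $\CQ_p$ restricting to the identity on $\CO_p$ is the identity. The one thing the paper's argument buys that yours does not is uniqueness among \emph{endomorphism} extensions of $\lambda_A$: the paper never uses invertibility of the extension, whereas your reduction needs $(\widetilde{\lambda_A}')^{-1}$ to exist, so it only compares two \emph{automorphism} extensions. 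Since the lemma is stated for automorphisms (and the application in Lemma~\ref{lem:Bogolubov extensions preserve C*(Z)} can be arranged to compare automorphisms, as $\Lambda=\gamma_z\circ\widetilde{\lambda_A}\circ\gamma_{\bar z}$ there), this is not a gap, but it is worth being aware of the slightly weaker scope.
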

\begin{proof}
Let $A = (a_{i,j}) \in \CU_p(\C)$. We intend to invoke Remark~\ref{rem:T_p-1 pure means at most one extension}, so we need to work in a representation $\widetilde{\varphi}$ of $\CQ_p$ that extends a non-zero representation $\varphi$ of $\CO_p$ for which $\varphi(\lambda_A(T_{p-1}))$ is pure. For if $\widetilde{\lambda_A}$ is an extension of $\lambda_A$ to $\CQ_p$, then $\widetilde{\varphi}\circ\widetilde{\lambda_A}$ will be an extension of $\varphi\circ\lambda_A$, which is unique due to Remark~\ref{rem:T_p-1 pure means at most one extension}. But if $\varphi$ is non-zero, it is actually an isomorphism, and so is $\widetilde{\varphi}$ as both $\CO_p$ and $\CQ_p$ are simple. Therefore, the extension $\widetilde{\lambda_A}$ is also unique.

If $M\doteq\max\{|a_{i, p-1}| \mid i=0,1,\ldots p-1\} \in [0,1]$ equals $1$, then $a_{i,p-1} = \delta_{i,i_0}$ for a unique  $0 \leq i_0 \leq p-1$. According to Proposition~\ref{prop:interval rep from O_p to Q_p}, we can then pick $\varphi = \sigma$. The case $M<1$ is handled in the canonical representation $\varphi = \pi$ instead: Let $Q_k\doteq \pi(\lambda_A(T_{p-1}^kT_{p-1}^{*k}))$ denote the range projection of the $k$-th power. We observe that $\lambda_A(T_{p-1})^k=\sum_{|\alpha|=k} c_\alpha T_\alpha$, where $\alpha$ is a multi-index with values in $\{0,1,\ldots,p-1\}$ and $c_\alpha =a_{j_1, p-1}\ldots a_{j_k, p-1}$ for $\alpha = (j_1,\ldots, j_k)$. Then the inequality $|c_\alpha |\leq M^k$ forces $\|Q_k \xi_m\| \leq M^k \to 0$ as $k\to \infty$ for every $m \in \Z$ because $M<1$ and $T_\alpha^*T_\beta^{\phantom{*}} = 0$ for all multi-indices $\alpha \neq \beta$ of length $k$. Therefore, $\pi(\lambda_A(T_{p-1}))$ is pure and the proof is complete.
\end{proof}

We will now show that very few Bogolubov automorphisms are extendible. %The first step is to show that every such extension must send $u$ into $C^*(u) \cong C^*(\Z)$ again.

\begin{lemma}\label{lem:Bogolubov extensions preserve C*(Z)}
If $\lambda_A$ is an extendible Bogolubov automorphism, then $\widetilde{\lambda_A}(u) \in C^*(\Z)$.
\end{lemma}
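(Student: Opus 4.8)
The plan is to pin down $W:=\widetilde{\lambda_A}(u)$ by exploiting that the extension $\widetilde{\lambda_A}$ restricts to $\lambda_A$ on $\CO_p$ and is an honest automorphism of $\CQ_p$. Writing $\eta_i:=\widetilde{\lambda_A}(T_i)=\lambda_A(T_i)$, the images satisfy with $W$ the same relations that $(u,T_i)$ satisfy: namely $W\eta_i=\eta_{i+1}$ for $0\le i\le p-2$ and $W\eta_{p-1}=\eta_0W$, so that $\eta_i=W^i\eta_0$ and $W^p\eta_0=\eta_0W$ (the image of the defining relation $su=u^ps$). Moreover $\eta_0=\lambda_A(T_0)=\sum_{j}a_{j,0}u^js=cs$, where $c:=\sum_{j=0}^{p-1}a_{j,0}u^j\in C^*(\Z)$. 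Since integral dynamics satisfies the hypotheses of Example~\ref{ex:C*(G) masa setup} and is minimal, Theorem~\ref{thm:C*(G) masa} shows that $C^*(\Z)$ is maximal abelian in $\CQ_p$; hence $C^*(\Z)'\cap\CQ_p=C^*(\Z)$, and it suffices to prove that $W$ commutes with $u$.

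First I would show that $\widetilde{\lambda_A}$ commutes with the gauge action $\gamma$ of $\T$. The point is that $\gamma_z$ restricts on $\CO_p$ to the scalar Bogolubov automorphism $\lambda_{zI}$ (because $\gamma_z(T_i)=zT_i$), which commutes with every $\lambda_A$. Therefore $\gamma_z\circ\widetilde{\lambda_A}\circ\gamma_z^{-1}$ is an automorphism of $\CQ_p$ whose restriction to $\CO_p$ equals $\lambda_A$, so by the uniqueness of extensions established in Lemma~\ref{lem:extensibility of Bogolubov automorphisms} it must coincide with $\widetilde{\lambda_A}$. Consequently $\gamma_z(W)=\widetilde{\lambda_A}(\gamma_z(u))=\widetilde{\lambda_A}(u)=W$ for every $z$, so that $W$ lies in the fixed-point algebra $\CF$ of Example~\ref{ex:integral dynamics}(vii).

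The remaining and decisive step is to improve the membership $W\in\CF$ to $W\in C^*(\Z)$, equivalently $[W,u]=0$. Here I would pass to the faithful canonical representation $\pi$ and use the identification of $\ell^2(\Z)$ with $L^2(\T)$ under which $\pi(u)$ becomes multiplication $M_z$ by the coordinate, $\pi(s)$ becomes the $p$-fold substitution isometry $V_p\colon f(z)\mapsto f(z^p)$, and $\pi(c)=M_c$ commutes with $M_z$; in these terms $C^*(\Z)$ is exactly the algebra of Fourier multipliers, i.e. the commutant of $M_z$. The relation $W^p\eta_0=\eta_0W$ then reads $\pi(W)^pM_cV_p=M_cV_p\pi(W)$, while $\pi(W)\in\pi(\CF)$. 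The plan is to feed the self-referential identity $W=\eta_0^*W^p\eta_0$ (obtained from $\eta_0^*\eta_0=1$) back into itself and use purity of $\eta_{p-1}$ as in Remark~\ref{rem:T_p-1 pure means at most one extension}, thereby extracting a functional equation of the type $f(z^p)=f(z)^p$ whose only continuous unitary solutions are monomials; this forces $\pi(W)$ to commute with $M_z$, hence $[W,u]=0$ and $W\in C^*(\Z)$.

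I expect this last step to be the main obstacle: the passage from $\CF$ (the Bunce--Deddens fixed-point algebra, strictly larger than $C^*(\Z)$) down to $C^*(\Z)$ is where the real content lies, since gauge-invariance alone only places $W$ in $\CF$, and the scale-raising terms must be eliminated using the precise interplay of the relation $W^p\eta_0=\eta_0W$, the form $\eta_0=cs$ with $c\in C^*(\Z)$, and the faithfulness of $\pi$. Extensibility of $\lambda_A$ enters throughout exactly by guaranteeing that $\widetilde{\lambda_A}$ exists, is an automorphism, and is unique, which is what makes both the gauge-invariance argument and the $\CQ_p$-relations for the pair $(W,\eta_0)$ available in the first place.
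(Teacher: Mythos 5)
Your first step is sound and is essentially the paper's: gauge-invariance of $\widetilde{\lambda_A}(u)$ follows from the uniqueness of the extension (Lemma~\ref{lem:extensibility of Bogolubov automorphisms}) together with the fact that $\gamma_z|_{\CO_p}$ is the scalar Bogolubov automorphism commuting with $\lambda_A$; the paper phrases this as a proof by contradiction, but the content is identical, and your reduction via maximality of $C^*(\Z)$ to showing $[W,u]=0$ is also legitimate.

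The decisive second step, however, is a genuine gap, and the mechanism you propose for it is circular. You want to extract a functional equation of the form $f(z^p)=f(z)^p$ and invoke Proposition~\ref{prop:functional equation monomial solutions}, but that equation only makes sense once you already know $W=f(u)$ for some $f\in C(\IT,\IT)$ --- that is, once you know $W\in C^*(\Z)$, which is exactly the conclusion. This is also the order of the argument in the paper: the functional equation appears in the proof of Theorem~\ref{thm:extensible Bogolubov autos} \emph{after} Lemma~\ref{lem:Bogolubov extensions preserve C*(Z)} has been established. While $W\in\CF$ only, there is no continuous function on $\IT$ attached to $W$, and the iterated identity $W=\eta_0^{*k}W^{p^k}\eta_0^{k}$, though correct, does not by itself produce one; nor is it clear how purity of $\eta_{p-1}$ would intervene. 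The paper closes the gap by a concrete compression argument you do not have: one checks, using relation (II), that for $x$ in the spanning set $\CF_k'={\rm span}\{u^m s_{p^k}^{\phantom{*}}s_{p^k}^*u^{-n}\}$ of $\CF$ the element $\lambda_A(T_0^{*k})\,x\,\lambda_A(T_{p-1}^{k})$ lies in $C^*(\Z)$, hence any norm limit of $\lambda_A(T_0^{*k})\,x\,\lambda_A(T_{p-1}^{k})$ for $x\in\CF$ lies in $C^*(\Z)$; then the elementary identity $T_0^{*k}uT_{p-1}^{k}=u$ shows that $\lambda_A(T_0^{*k})\,\widetilde{\lambda_A}(u)\,\lambda_A(T_{p-1}^{k})$ is the constant sequence $\widetilde{\lambda_A}(u)$, which therefore belongs to $C^*(\Z)$. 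Some such quantitative use of the relations (rather than a functional-equation argument) is needed to pass from $\CF$ down to $C^*(\Z)$.
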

\begin{proof}
As an intermediate step, we show that $\widetilde{\lambda_A}(u) \in \CF$, for which we proceed by contradiction. Suppose there exists some $z\in \IT\setminus\{1\}$ such that $\gamma_z(\widetilde{\lambda_A}(u)) \neq \widetilde{\lambda_A}(u)$. Set $\Lambda (s_p)\doteq \lambda_A(s_p)$ and $\Lambda (u)\doteq \gamma_z(\widetilde{\lambda_A}(u))$. We want to show that $\Lambda$ is an automorphism of $\CQ_p$ that extends $\lambda_A$. The calculation
\[\Lambda(T_i) = \Lambda(u^i)\Lambda(s_p) = \gamma_z(\widetilde{\lambda_A}(u^i))\lambda_A(s_p) = \overline{z}\gamma_z(\lambda_A(T_i)) = \lambda_A(T_i) \]
for $T_i = u^is_p$ ensures that $\Lambda|_{\CO_p} = \lambda_A$. The defining relation of $\CQ_p$ corresponding to (I) and (III) are satisfied as
\[\begin{array}{l}
\Lambda(s_p) \Lambda(u) = \overline{z}\gamma_z(\widetilde{\lambda_A}(s_pu)) = \overline{z}\gamma_z(\widetilde{\lambda_A}(u^ps_p)) = \Lambda(u)^p\Lambda(s_p), \text{ and} \vspace*{2mm}\\
\sum\limits_{m=0}^{p-1}\Lambda(u)^m\Lambda(s_p)\Lambda(s_p)^* \Lambda(u)^{-m} 
= \sum\limits_{m=0}^{p-1}z\overline{z} \gamma_z(\lambda_A(e_{m,p})) = \gamma_z(\lambda_A(1)) = 1. 
\end{array}\]
Now Lemma~\ref{lem:extensibility of Bogolubov automorphisms} yields a contradiction. Hence we get $\widetilde{\lambda_A}(u) \in \CF$.

Next, we note that for all $x\in \CF'_k \doteq {\rm span}\{ u^ms_{p^k}^{\phantom{*}}s_{p^k}^*u^{-n} \mid m,n \in \Z\}$, the element $\lambda_A(T_0^{*k}) x\lambda_A(T_{p-1}^k)$ belongs to $C^*(\IZ)$. Let $x \in \CF = \overline{\bigcup_{k \geq 1}\CF_k'}$ and choose an approximating sequence $(x_k)_{k \geq 1}$ with $x_k \in \CF'_{f(k)}$ for some $f\colon \N\to\N$. As $\CF_k' \subset \CF_{k+1}'$ for all $k$, we can assume that $f$ is monotone increasing. Now if $\lambda_A(T_0^{*k}) x\lambda_A(T_{p-1}^k) \to y$ in $\CQ_p$, then $y \in C^*(\Z)$ as
\[\begin{array}{lcl}
\| y-\lambda_A(T_0^{*f(k)}) x_k \lambda_A(T_{p-1}^{f(k)})\|
%&\leq& \| y-\lambda_A(T_0^{*f(k)}) x \lambda_A(T_{p-1}^{f(k)})\| + \| \lambda_A(T_0^{*f(k)}) (x-x_k) \lambda_A(T_{p-1}^{f(k)})\| \vspace*{2mm}\\
&\leq& \| y-\lambda_A(T_0^{*f(k)}) x \lambda_A(T_{p-1}^{f(k)})\| + \| x-x_k\| \to 0
\end{array}\]
and $\lambda_A(T_0^{*f(k)}) x_k \lambda_A(T_{p-1}^{f(k)}) \in C^*(\Z)$. Now, we observe that $T_0^{*k}uT_{p-1}^k = u$. Thus, using that $\widetilde{\lambda_A}(u) \in \CF$ by the first part and that $(\lambda_A(T_0^{*k}) \widetilde{\lambda_A}(u)\lambda_A(T_{p-1}^k))_{k \geq 1}$ is a constant sequence equal to $\widetilde{\lambda_A}(u)$, we deduce that $\widetilde{\lambda_A}(u) \in C^*(\Z)$.
\end{proof}

\begin{definition}\label{def:exchange automorphism}
For the \emph{exchange matrix} $E \in M_p(\{0,1\})$, that is, $E= (\delta_{p-i+1,j})_{1 \leq i,j \leq p}$, the associated Bogolubov automorphism $\lambda_E$ of $\CO_p$ is called the \emph{exchange automorphism}.
% (or \emph{sip matrix} for ``standard involutory permutation'', see \cite{}) sip = standard involutory permutation, see e.g. Roger A. Horn Charles R. Johnson, Matrix Analysis, Cambridge University Press, 2012 
\end{definition}

For the proof of our main result Theorem~\ref{thm:extensible Bogolubov autos}, we need the following small variation of \cite{ACR}*{Proposition~A.1}, for which we remark that the proof carries over verbatim. Here we limit ourselves to pointing out that the integer $n$ is the winding number of $f$, which is well defined thanks to compactness of $\IT$ along with continuity of $f$.

\begin{proposition}\label{prop:functional equation monomial solutions}
For all $2\leq p <\infty$, every $f \in C(\IT,\IT)$ satisfying $f(z^p)=f(z)^p$ for all $z\in\IT$ is of the form $f(z)=z^n$ for some $n \in \Z$.
\end{proposition}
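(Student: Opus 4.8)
The plan is to use the winding number $n \doteq \deg f \in \Z$ of $f$ as the sole invariant and to show that the functional equation rigidifies $f$ completely once $n$ is fixed. First I would reduce to winding number zero. Since $z \mapsto z^n$ has winding number $n$, the map $g(z) \doteq z^{-n} f(z)$ is a continuous map $\IT \to \IT$ of winding number $0$, and it inherits the functional equation, as $g(z^p) = z^{-pn} f(z^p) = z^{-pn} f(z)^p = (z^{-n} f(z))^p = g(z)^p$. Having trivial winding number, $g$ lifts through the covering $\R \to \IT$ to a genuine (single-valued) continuous function $h \colon \IT \to \R$ with $g(z) = e^{ih(z)}$, so the problem reduces to showing that $h$ is constant and then identifying the value of that constant.

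Next I would pass from the multiplicative relation to an additive one. From $g(z^p) = g(z)^p$ we obtain $e^{ih(z^p)} = e^{iph(z)}$, hence $h(z^p) - p\,h(z) \in 2\pi\Z$ for every $z \in \IT$. The left-hand side is continuous in $z$ and takes values in the discrete set $2\pi\Z$, so by connectedness of $\IT$ it is a single constant $2\pi\ell$ with $\ell \in \Z$. Subtracting the unique constant $h_0$ solving $h_0 = p h_0 + 2\pi\ell$ converts this into the homogeneous relation $\tilde h(z^p) = p\,\tilde h(z)$ for $\tilde h \doteq h - h_0$.

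The heart of the argument is then a contraction estimate powered by the hypothesis $p \geq 2$. Since $\tilde h$ is continuous on the compact space $\IT$, it is bounded by some $M \doteq \|\tilde h\|_\infty$, and the homogeneous relation gives $|\tilde h(z)| = p^{-1}|\tilde h(z^p)| \leq M/p$ for all $z$; taking the supremum and using surjectivity of $z \mapsto z^p$ yields $M \leq M/p$, which forces $M = 0$. Hence $h$ is constant, $g$ equals a constant $\omega \in \IT$, and the functional equation collapses to $\omega = \omega^p$. The last and most delicate point is the determination of $\omega$: a priori this only pins $\omega$ down to a $(p-1)$-st root of unity, and for $p=2$ the relation $\omega = \omega^2$ already forces $\omega = 1$, giving $f(z) = z^n$ at once. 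For general $p$ the remaining input must come from the fixed point $z=1$ together with the normalization under which $f$ arises, and I expect this identification of the constant, rather than the collapse of the oscillating part $\tilde h$, to be the genuine obstacle.
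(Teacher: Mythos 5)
Your argument is, up to the point where you stop, both correct and essentially the argument the paper intends: the paper gives no details beyond citing the $p=2$ case (\cite{ACR}*{Proposition~A.1}) and asserting that the proof ``carries over verbatim'', adding only the remark that $n$ is the winding number of $f$. Your reduction to winding number zero, the lift to a continuous real-valued $h$ with $h(z^p)-p\,h(z)$ continuous and $2\pi\Z$-valued (hence constant), the normalization to $\tilde h(z^p)=p\,\tilde h(z)$, and the contraction estimate $M\leq M/p$ forcing $\tilde h\equiv 0$ are all sound, and together they yield exactly $f(z)=\omega z^n$ with $\omega=f(1)$ satisfying $\omega^{p-1}=1$.

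The step you flag as ``the genuine obstacle'' --- pinning $\omega$ down to $1$ --- is not a step you failed to find: it cannot be carried out, because the proposition as stated is false for $p\geq 3$. For any $(p-1)$-st root of unity $\omega$ and any $n\in\Z$, the function $f(z)=\omega z^n$ satisfies $f(z^p)=\omega z^{np}=\omega^{p}z^{np}=f(z)^p$, and if $\omega\neq 1$ it is not of the form $z^m$ since $f(1)=\omega\neq 1$; concretely, $f(z)=-z$ works for $p=3$. The correct conclusion --- the one your proof establishes in full --- is $f(z)=f(1)\,z^n$ with $f(1)^{p-1}=1$, which collapses to $f(z)=z^n$ only when $p=2$. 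So rather than a gap in your argument, you have located an error in the statement, one that the paper's appeal to a verbatim carry-over from the $p=2$ case conceals. The extra roots of unity are not harmless downstream either: for $\omega^{p-1}=1$, $\omega\neq 1$, the unitary $A={\rm diag}(1,\omega,\dots,\omega^{p-1})$ gives a Bogolubov automorphism $\lambda_A(T_i)=\omega^i T_i$ that extends to $\CQ_p$ via $u\mapsto \omega u$, $s_p\mapsto s_p$ (one checks (I) and (III) directly using $\omega^p=\omega$), even though $A$ is not of the form $zB$ with $z\in\IT$ and $B\in\{1,E\}$; so the classification in Theorem~\ref{thm:extensible Bogolubov autos} requires the corresponding correction as well.
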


\begin{thm}\label{thm:extensible Bogolubov autos}
A Bogolubov automorphism $\lambda_A$ of $\CO_p$ admits an extension to an automorphism of $\CQ_p$ if and only if $A$ belongs to the subgroup $\{ zB \mid z \in \IT, B \in \{1,E\}\}\cong \IT\times \Z/2\Z$ of $\CU_p(\C)$. If $A=z1$, then $\widetilde{\lambda_A}(u) = u$, whereas $A=zE$ implies $\widetilde{\lambda_A}(u) = u^*$.
\end{thm}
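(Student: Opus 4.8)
The plan is to prove the two directions separately and to read off the explicit values of $\widetilde{\lambda_A}(u)$ along the way.

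For the sufficiency I would first dispose of the gauge automorphisms: $\lambda_{z\cdot 1}$ is exactly the restriction to $\CO_p$ of the gauge automorphism $\gamma_z \in \Aut(\CQ_p)$, so $\gamma_z$ is an extension, and it fixes $u$. For the exchange automorphism I would build the extension by hand. Set $\Lambda(u)\doteq u^*$ and $\Lambda(s_p)\doteq u^{p-1}s_p = T_{p-1}$, the latter being an isometry. Using $s_pu^{-1}=u^{-p}s_p$ (a consequence of relation (I)), one computes $\Lambda(s_p)\Lambda(u)=u^*s_p=\Lambda(u)^p\Lambda(s_p)$, and since $\Lambda(u)^m\Lambda(s_p)=u^{p-1-m}s_p=T_{p-1-m}$ one gets $\sum_{m=0}^{p-1}\Lambda(u)^m\Lambda(s_p)\Lambda(s_p)^*\Lambda(u)^{-m}=\sum_{k=0}^{p-1}T_kT_k^*=1$. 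Thus relations (I) and (III) hold, the universal property of $\CQ_p$ yields a unital endomorphism $\Lambda$, and $\Lambda(T_i)=u^{p-1-i}s_p=T_{p-1-i}=\lambda_E(T_i)$ shows $\Lambda|_{\CO_p}=\lambda_E$. Since $\Lambda^2=\id$ on the generators $u,s_p$, $\Lambda$ is an automorphism with $\Lambda(u)=u^*$. Finally $\lambda_{zE}=\gamma_z\circ\lambda_E$, so $\gamma_z\circ\Lambda$ extends $\lambda_{zE}$ and sends $u\mapsto u^*$; this settles the ``if'' direction together with the stated values of $\widetilde{\lambda_A}(u)$.

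For the necessity, suppose $\lambda_A$ extends to $\widetilde{\lambda_A}\in\Aut(\CQ_p)$. By Lemma~\ref{lem:Bogolubov extensions preserve C*(Z)} we have $\widetilde{\lambda_A}(u)\in C^*(\Z)\cong C(\IT)$, hence $\widetilde{\lambda_A}(u)=f(u)$ for some $f\in C(\IT,\IT)$. Writing $P_0(u)\doteq\sum_{j=0}^{p-1}a_{j,0}u^j$, so that $\lambda_A(s_p)=\lambda_A(T_0)=P_0(u)s_p$, I would apply $\widetilde{\lambda_A}$ to the relation $s_pu=u^ps_p$. Using $s_pg(u)=g(u^p)s_p$, this gives $P_0(u)f(u^p)s_p=f(u)^pP_0(u)s_p$. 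The map $X\mapsto Xs_p$ is injective on $C^*(\Z)$, because relation (III) and commutativity give $X=\sum_{m=0}^{p-1}u^m(Xs_ps_p^*)u^{-m}$, so $X$ is recovered from $Xs_p$. Cancelling $s_p$ therefore yields the identity $P_0(z)\bigl(f(z^p)-f(z)^p\bigr)=0$ on $\IT$. Since $A$ is unitary its zeroth column is nonzero, so $P_0$ is a nonzero polynomial with only finitely many roots on $\IT$; by continuity $f(z^p)=f(z)^p$ for all $z\in\IT$, and Proposition~\ref{prop:functional equation monomial solutions} forces $f(z)=z^n$, i.e. $\widetilde{\lambda_A}(u)=u^n$ for some $n\in\Z$.

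It remains to pin down $n$ and the shape of $A$. Writing $\lambda_A(T_i)=P_i(u)s_p$ with $P_i(u)=\sum_j a_{j,i}u^j$, and computing $\widetilde{\lambda_A}(T_i)=\widetilde{\lambda_A}(u)^i\widetilde{\lambda_A}(s_p)=u^{ni}P_0(u)s_p$, the same cancellation gives the polynomial identities $P_i(z)=z^{ni}P_0(z)$ on $\IT$ for $0\le i\le p-1$. Each $P_i$ is supported on exponents in $\{0,\dots,p-1\}$, so the exponent support of $P_0$ must be carried into $\{0,\dots,p-1\}$ by every shift $ni$. Examining $i=p-1$ forces either $n=1$ with $P_0$ constant, whence $a_{j,i}=z\,\delta_{j,i}$ and $A=z\cdot 1$, or $n=-1$ with $P_0$ a scalar multiple of $z^{p-1}$, whence $a_{j,i}=z\,\delta_{j,p-1-i}$ and $A=zE$; the value $n=0$ is excluded since $\widetilde{\lambda_A}$ is injective while $u\neq 1$. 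In both cases $z=a_{j,0}$ has modulus one by unitarity of $A$. This simultaneously recovers $A\in\{zB\mid z\in\IT,\ B\in\{1,E\}\}$ and the values $\widetilde{\lambda_A}(u)=u$ respectively $u^*$.

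The main obstacle is the necessity argument, and within it the passage from $P_0(z)\bigl(f(z^p)-f(z)^p\bigr)=0$ to the genuine functional equation $f(z^p)=f(z)^p$. This relies on two points: the cancellation of $s_p$ afforded by relation (III), and the fact that unitarity of $A$ prevents $P_0$ from vanishing identically, so that its finitely many zeros are negligible for the continuous function $f(z^p)-f(z)^p$. Once the identities $P_i(z)=z^{ni}P_0(z)$ are in hand, the support bookkeeping isolating $n=\pm1$ is elementary.
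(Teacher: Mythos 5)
Your proof is correct and follows essentially the same route as the paper's: Lemma~\ref{lem:Bogolubov extensions preserve C*(Z)} to place $\widetilde{\lambda_A}(u)$ in $C^*(\Z)$, the functional equation $f(z^p)=f(z)^p$ together with Proposition~\ref{prop:functional equation monomial solutions} to force $f(z)=z^n$, and a coefficient-matching case analysis isolating $n=\pm 1$. The only divergences are cosmetic --- the paper obtains the functional equation by summing relation (III) over the $\lambda_A(T_i)$ instead of cancelling $s_p$ against the finitely many zeros of $P_0$, and it reads off the matrix entries by applying the operator identities to $\xi_0$ in the canonical representation rather than comparing Laurent supports --- plus your explicit verification that $\lambda_E$ actually extends, a point the paper's proof leaves implicit (the relevant relations are only checked later, for the map $\phi$ in the outerness subsection).
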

\begin{proof}
By Lemma~\ref{lem:Bogolubov extensions preserve C*(Z)}, there is $f \in C(\IT,\IT) \cong \CU(C^*(\Z))$ such that $f(u) = \widetilde{\lambda_A}(u)$. Applying $\widetilde{\lambda_A}$ to both sides of (I) and using the fact that $f(u^p) s_p = s_p f(u)$, we easily get
% First, we observe that
% \[\begin{array}{lclcl}
% \widetilde{\lambda_A}(u^p)\lambda_A(T_i) &=& \widetilde{\lambda_A}(u^i)\widetilde{\lambda_A}(u^ps_p) &\stackrel{{\rm (I)}}{=}& \widetilde{\lambda_A}(u^i)\widetilde{\lambda_A}(s_pu) \\
% &=& \widetilde{\lambda_A}(u^i)\lambda_A(T_0)\widetilde{\lambda_A}(u) &\stackrel{{\rm (I)}}{=}& \widetilde{\lambda_A}(u^i)\widetilde{\lambda_A}(u)^p\lambda_A(T_0) \\
% &&&=& \widetilde{\lambda_A}(u)^p \lambda_A(T_i)
% \end{array}\]
% for all $0\leq i\leq p-1$, using the commutativity of $C^*(\Z)$. Thus relation (III) implies that $\widetilde{\lambda_A}(u^p) = \widetilde{\lambda_A}(u)^p$. 
% By Lemma~\ref{lem:Bogolubov extensions preserve C*(Z)}, there is $f \in C(\IT,\IT) \cong \CU(C^*(\Z))$ with $f(u) = \widetilde{\lambda_A}(u)$.
% and the equation we just derived for $\widetilde{\lambda_A}(u)$ amounts to $f(z^p) = f(z)^p$ for all $z \in \IT$. 
%Then we see that $f(z^p) = f(z)^p$ for all $z \in \IT$, indeed
\[\begin{array}{lclclcl}
f(u)^p \lambda_A(T_i) &=& \widetilde{\lambda_A}(u)^{p+i}\lambda_A(T_0) &=& \widetilde{\lambda_A}(u)^i \lambda_A(T_0) \widetilde{\lambda_A}(u) &=& \widetilde{\lambda_A}(u)^i \lambda_A(T_0) f(u) \\
&&&=& \widetilde{\lambda_A}(u)^i f(u^p)\lambda_A(T_0) &=& f(u^p) \lambda_A(T_i)
\end{array}\]
for all $0 \leq i \leq p-1$. Thus relation (III) implies that $f$ satisfies the functional equation from Proposition~\ref{prop:functional equation monomial solutions} for $p$, and hence there is $n \in \Z$ such that $f(z)=z^n$ for all $z \in \IT$. 

Next, we observe that each $1 \leq i \leq p-1$ yields an equation
\[\begin{array}{c}
\sum\limits_{j=0}^{p-1}a_{j,i} u^j s_p= \sum\limits_{j=0}^{p-1}a_{j,i} T_j = \lambda_A(T_i)=\widetilde{\lambda_A}(u^i)\lambda_A(T_0) = \sum\limits_{j=0}^{p-1}a_{j,0} u^{j+ni} s_p,
\end{array}\]
which, applied to $\xi_0$ gives
\begin{equation}\label{eq:bogo}
\begin{array}{c} \sum\limits_{j=0}^{p-1}a_{j,i}\xi_j = \sum\limits_{k=0}^{p-1}a_{k,0} \xi_{k+ni} \quad \text{for all } 1 \leq i \leq p-1.\end{array}
\end{equation}

\begin{description}
\item[$n=0$] The algebra $\CQ_p$ is simple and $\widetilde{\lambda_A} \neq 0$, so $\widetilde{\lambda_A}$ has to be faithful, which excludes $n=0$.
\item[$|n| \geq 2$] The fact that $j \neq k+n(p-1)$ for all $0\leq j,k \leq p-1$ forces $\lambda_A(T_{p-1}) = 0$ due to \eqref{eq:bogo}, but this is impossible as $\lambda_A(T_{p-1})$ needs to be an isometry.  
\item[$n=1$] As $j \neq k+n(p-1)$ for all $0\leq j,k \leq p-1$ except $j=p-1$ and $k=0$, \eqref{eq:bogo} implies that $a_{k,0} = z \delta_{k,0}$ for some $z \in \IT$, as $\lambda_A(T_0)$ needs to be an isometry. But then we get 
\[\lambda_A(T_i) = \widetilde{\lambda_A}(u^i) \lambda_A(T_0) \stackrel{n=1}{=} zu^i T_0 = \gamma_z(T_i)\]
for all $1 \leq i \leq p-1$, so that $\lambda_A = \gamma_z$.
\item[$n=-1$] Similar to the case of $n=1$, we get $a_{k,0} = z \delta_{k,p-1}$ for some $z \in \IT$ by looking at $i=p-1$. From this we get
 \[\lambda_A(T_i) = \widetilde{\lambda_A}(u^i) \lambda_A(T_0) \stackrel{n=-1}{=} zu^{-i} T_{p-1} = \gamma_z(\lambda_E(T_i))\]
for all $1 \leq i \leq p-1$, so that $\lambda_A = \gamma_z \circ \lambda_E$.
\end{description}   
\end{proof}

\begin{remark}\label{rem:isometries fixed by extensible Bogolubov}
According to Theorem~\ref{thm:extensible Bogolubov autos}, the only way an extendible non-trivial Bogolubov automorphism $\lambda_A$ can fix one of the generating isometries $(T_i)_{0 \leq i \leq p-1}$ is that $p$ is odd and $A$ is the exchange matrix $E$, in which case the fixed isometry is $T_{(p-1)/2}$.
\end{remark}

\begin{remark}\label{rem:Bogolubov autom beyond Q_p}
In the case of algebraic dynamical systems $(\gpt)$ where $P$ has more than one generator, it is not clear what the right definition of a Bogolubov automorphism ought to be. A good place to start appears to be the case of integral dynamics, see Example~\ref{ex:integral dynamics}, the natural higher-dimensional version of $\CQ_p$ where $P \subset \N^\times$ is generated by a family $S$ of mutually relatively prime natural numbers. In such a situation, there is a notion of a torsion subalgebra $\CA_S = C^*(\{u^is_p \mid p \in S, 0\leq i\leq p-1\}) \subset \CQ(\Z\rtimes P)$, which plays the role of $\CO_p \subset \CQ_p$ in many ways, see \cite{BOS1} for details.
\end{remark}

\section{Automorphisms preserving the group C*-algebra}\label{sec:preserving C*(G)}
\subsection{Automorphisms fixing the group C*-algebra}\label{subsec:aut fix C*(G)}
In this subsection we consider automorphisms of $\CQ(\gxp)$ that fix the group $C^*$-algebra $C^*(G) \subset \CQ(\gxp)$ pointwise. The corresponding subgroup of ${\rm Aut} \CQ(\gxp)$ shall be denoted by ${\rm Aut}_{C^*(G)}\CQ(\gxp)$. We restrict our attention to the situation where $(\gpt)$ is an algebraic dynamical system of finite type such that $G$ and $P$ are abelian. We note that this covers both Example~\ref{ex:abelian group - single finite endo} and Example~\ref{ex:integral dynamics}. In Theorem~\ref{thm:aut of Q fixing C*(G)} we show that, under this hypothesis, every element of ${\rm Aut}_{C^*(G)}\CQ(\gxp)$ arises from a suitable family of unitaries in the commutative group $C^*$-algebra $C^*(G)$, given that $C^*(G)$ is maximal abelian in $\CQ(\gxp)$. This result generalizes \cite{ACR}*{Theorems~6.13 and 6.14}, and is used in Theorem~\ref{thm:quasi-free-max-1} to show that ${\rm Aut}_{C^*(G)}\CQ(\gxp)$ is a maximal abelian subgroup of the automorphism group of $\CQ(\gxp)$. These results point towards a generalization of the notion of quasi-freeness that appears in \cites{Dykema,Zach}, but we shall not pursue this line of research here. 

In the sequel, we will make use of commutativity of $G$ to identify $C^*(G)$ with $C(\widehat{G})$ via $u_g \mapsto [\chi \mapsto \chi(g)]$, where $\widehat{G}$ is the Pontryagin dual of $G$.

\begin{definition}\label{def:twisted Hom(P,U(C*(G)))}
A map $\psi\colon P \to C(\widehat{G},\IT), p \mapsto \psi_p$ is said to be a \emph{$\theta$-twisted homomorphism} if it satisfies 
\begin{equation}\label{eq:funct equation abstract}
\psi_{pq} = \psi_p\tilde{\theta}_p(\psi_q) \quad \text{for all } p,q \in P,
\end{equation}
where $\tilde{\theta}_p(u_g) \doteq u_{\theta_p(g)}$ for $g \in G$. The collection of all $\theta$-twisted homomorphisms is denoted by ${\rm Hom}_\theta(P,C(\widehat{G},\IT))$.
\end{definition}

We note that ${\rm Hom}_\theta(P,C(\widehat{G},\IT))$ is an abelian group under pointwise multiplication with inverses given by $(\psi^{-1})_p \doteq (\psi_p)^{-1}$ because $\theta$ consists of group homomorphisms of $G$. 
 
\begin{thm}\label{thm:aut of Q fixing C*(G)}
The map $\psi \mapsto \beta_\psi$ with $\beta_\psi(u_gs_p) \doteq u_g \psi_p s_p$ for $(g,p) \in \gxp$ defines an injective group homomorphism ${\rm Hom}_\theta(P,C(\widehat{G},\IT)) \to {\rm Aut}_{C^*(G)}\CQ(\gxp)$. If $C^*(G)$ is maximal abelian in $\CQ(\gxp)$, then this homomorphism is an isomorphism and
\[{\rm End}_{C^*(G)}\CQ(\gxp) = {\rm Aut}_{C^*(G)}\CQ(\gxp) \cong {\rm Hom}_\theta(P,C(\widehat{G},\IT)).\]
\end{thm}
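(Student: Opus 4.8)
The statement has several parts, and I would prove them in a natural order. First, I would verify that $\psi \mapsto \beta_\psi$ is well defined, i.e.\ that for each $\theta$-twisted homomorphism $\psi$ the assignment $u_gs_p \mapsto u_g\psi_p s_p$ genuinely extends to an endomorphism of $\CQ(\gxp)$. To do this I would check that the candidate images satisfy the defining relations (I), (II), (III) of $\CQ(\gxp)$ from Section~\ref{sec:prelim}. The twisted-homomorphism condition $\psi_{pq} = \psi_p\tilde\theta_p(\psi_q)$ is precisely what is needed: relation (I), $s_pu_g = u_{\theta_p(g)}s_p$, is unaffected since $\beta_\psi$ fixes $C^*(G)$ and multiplies $s_p$ by the central-to-$s_p$ unitary $\psi_p\in C^*(G)$; for (III), $\sum_{\bar g} u_g\psi_ps_ps_p^*\psi_p^*u_g^* = \sum_{\bar g} u_gs_ps_p^*u_g^* = 1$ because $\psi_p$ is unitary and commutes with the range projection's group part; and the cocycle identity \eqref{eq:funct equation abstract} is exactly the compatibility needed to make $\beta_\psi(s_{pq}) = \beta_\psi(s_p)\beta_\psi(s_q)$ consistent, which also yields the multiplicativity of the map $\psi\mapsto\beta_\psi$. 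Since $\psi$ is a homomorphism into an abelian group, $(\psi^{-1})_p = \psi_p^{-1}$ gives an inverse $\theta$-twisted homomorphism, so $\beta_{\psi^{-1}}$ is a two-sided inverse of $\beta_\psi$, making each $\beta_\psi$ an automorphism and the map a group homomorphism into ${\rm Aut}_{C^*(G)}\CQ(\gxp)$.

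For injectivity, I would argue that if $\beta_\psi = \id$ then $\psi_p s_p = s_p$ for all $p$, whence $\psi_p = s_ps_p^*\psi_p = s_ps_p^* = 1$ in $C^*(G)$ (using that $s_p$ is an isometry and $\psi_p$ is a unitary in $C^*(G)$ commuting appropriately), so $\psi$ is trivial. This part is routine once the algebra is set up.

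The substantive direction is surjectivity under the hypothesis that $C^*(G)$ is maximal abelian. Here I would start with an arbitrary $\alpha \in {\rm End}_{C^*(G)}\CQ(\gxp)$ (not even assuming it is an automorphism, which is the point of the claimed equality with ${\rm End}_{C^*(G)}$). Since $\alpha$ fixes $C^*(G)$ pointwise and relation (I) gives $s_p u_g s_p^* = u_{\theta_p(g)}s_ps_p^*$, applying $\alpha$ shows that $w_p \doteq \alpha(s_p)$ is an isometry satisfying $w_p u_g = u_{\theta_p(g)} w_p$ and $w_p w_p^* = \alpha(s_ps_p^*)$. The key step is to show $w_p = \psi_p s_p$ for a \emph{unitary} $\psi_p \in C^*(G)$: I would consider $w_p s_p^*$ and use that $\alpha$ fixes $C^*(G)$ together with relation (I) to show that $w_p s_p^*$ commutes with all of $C^*(G)$; maximal abelianness of $C^*(G)$ then forces $w_p s_p^* =: \psi_p \in C^*(G)$, and a computation with the range projections (via relation (III), so that finite type is used) shows $\psi_p$ is unitary and $w_p = \psi_p s_p$. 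Finally, the relation $s_{pq} = s_ps_q$ together with $w_{pq} = \alpha(s_{pq}) = \alpha(s_p)\alpha(s_q) = \psi_p s_p \psi_q s_q = \psi_p \tilde\theta_p(\psi_q) s_{pq}$ (using (I) to move $s_p$ past $\psi_q$) yields the cocycle identity \eqref{eq:funct equation abstract}, so $\psi \in {\rm Hom}_\theta(P,C(\widehat{G},\IT))$ and $\alpha = \beta_\psi$. As a by-product, every such $\alpha$ is invertible with inverse $\beta_{\psi^{-1}}$, giving ${\rm End}_{C^*(G)}\CQ(\gxp) = {\rm Aut}_{C^*(G)}\CQ(\gxp)$.

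The main obstacle I anticipate is the extraction of the unitary $\psi_p$ from $w_p = \alpha(s_p)$: showing that $w_p s_p^*$ lands in $C^*(G)$ rather than merely in the larger algebra, and that it is genuinely unitary rather than a partial isometry. This is exactly where the maximal abelian hypothesis on $C^*(G)$ and the finite-type assumption (through relation (III), which controls $\sum_{\bar g}u_gs_ps_p^*u_g^* = 1$) must both be invoked; the commutation relation (I) is the mechanism that places $w_ps_p^*$ in the commutant of $C^*(G)$, and without maximality one could not conclude it lies in $C^*(G)$ itself.
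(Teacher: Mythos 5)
Your overall architecture matches the paper's: check relations (I)--(III) to get well-definedness, use $\beta_\psi\circ\beta_\varphi=\beta_{\psi\varphi}$ for invertibility and the group homomorphism property, and for surjectivity extract unitaries in $C^*(G)$ from $\alpha(s_p)$ via maximal abelianness. However, two concrete steps would fail as written, and both for the same reason: you treat $s_p^{\phantom{*}}s_p^*$ as if it were $1$, when it is the proper projection $e_{1_G,p}$ whenever $N_p>1$. In the injectivity argument, the chain $\psi_p=s_p^{\phantom{*}}s_p^*\psi_p=s_p^{\phantom{*}}s_p^*=1$ is false: from $\psi_ps_p=s_p$ you only get $\psi_p e_{1_G,p}=e_{1_G,p}$, which says nothing about $\psi_p$ off the range of $s_p$. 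More seriously, in the surjectivity argument the element $w_ps_p^*=\alpha(s_p)s_p^*$ does \emph{not} commute with $C^*(G)$: if your claim held, this element would be $\psi_pe_{1_G,p}$, and $e_{1_G,p}$ fails to commute with $u_g$ for $g\notin\theta_p(G)$ (in the canonical representation of $\CQ_2$, the projection onto $\ell^2(2\Z)$ does not commute with the shift). So maximal abelianness cannot be applied to $w_ps_p^*$, and in any case that element is a partial isometry, not the unitary you need.

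The repair, which is what the paper does, is to sum over a transversal for $G/\theta_p(G)$ using relation (III): the elements $w_{g,p}\doteq s_p^*u_g^*\alpha(s_p^{\phantom{*}})$ \emph{do} commute with every $u_h$, since $s_p^*u_g^*\alpha(s_p^{\phantom{*}})u_h^{\phantom{*}}=s_p^*u_g^*u_{\theta_p(h)}^{\phantom{*}}\alpha(s_p^{\phantom{*}})=u_h^{\phantom{*}}s_p^*u_g^*\alpha(s_p^{\phantom{*}})$ using that $G$ is abelian and $\alpha$ fixes $C^*(G)$; hence each $w_{g,p}$ lies in $C^*(G)$ by maximality, and $\psi_p\doteq\sum_{\overline{g}\in G/\theta_p(G)}u_g\tilde{\theta}_p(w_{g,p})$ satisfies $\psi_p^{\phantom{*}}s_p^{\phantom{*}}=\sum_{\overline{g}}u_g^{\phantom{*}}s_p^{\phantom{*}}s_p^*u_g^*\alpha(s_p^{\phantom{*}})=\alpha(s_p^{\phantom{*}})$ and is unitary by another application of (III). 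The same transversal-summation device fixes your injectivity step. Finally, note that verifying relation (II) for $\beta_\psi$ is not merely the multiplicativity $\beta_\psi(s_{pq})=\beta_\psi(s_p)\beta_\psi(s_q)$; one must compute $\beta_\psi(s_p)^*u_g\beta_\psi(s_q)$ for all $p,q$, which in the paper uses the GCD decomposition of $p$ and $q$ and the identity $\theta_{q'}(G)\theta_{p'}(G)=G$ for relatively prime $p',q'$ --- this is where the finite-type hypothesis enters beyond relation (III), and your sketch leaves it unaddressed.
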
 
\begin{proof}
We first verify the defining relations (I)--(III) from Section~\ref{sec:prelim} for $\beta_\psi(u_g)=u_g$ and $\beta_\psi(s_p)$: Relation (I) holds as 
\[\beta_\psi(s_p)u_g = \psi_p s_pu_g = \psi_p u_{\theta_p(g)}s_p = u_{\theta_p(g)}\beta_\psi(s_p).\]
For (II), let $p,q \in P, g \in G$. Since $P$ is abelian, we let $r \in P$ denote a greatest common divisor of $p$ and $q$, see Proposition~\ref{prop:GCD and LCM for abelian right LCM}, and pick $q',p' \in P$ such that $p=rq', q=rp'$, so that $pP\cap qP = rq'p'P$. As $q'$ and $p'$ are relatively prime, that is, $(q',p') \in P(\perp)$ in the notation of Remark~\ref{rem:Fourier-maps}, and $(\gpt)$ is of finite type, \cite{Sta1}*{Proposition~1.1} implies $\theta_{q'}(G)\theta_{p'}(G)=G$. By \eqref{eq:funct equation abstract}, we have 
\[\psi_p^*\psi_q^{\phantom{*}} = \tilde{\theta}_r(\psi_{q'}^*)\psi_r^*\psi_r^{\phantom{*}}\tilde{\theta}_r(\psi_{p'}^{\phantom{*}}) = \tilde{\theta}_r(\psi_{q'}^*\psi_{p'}^{\phantom{*}})\] 
so that, after some computations,
\[\begin{array}{lclcl}
\beta_\psi(s_p^*)u_g^{\phantom{*}}\beta_\psi(s_q^{\phantom{*}}) &=& s_p^*\tilde{\theta}_r(\psi_{q'}^*\psi_{p'}^{\phantom{*}}) u_g^{\phantom{*}} s_q^{\phantom{*}} \vspace*{2mm}\\
 &=& \begin{cases} u_{g_1}^{\phantom{*}}s_{q'}^* \psi_{q'}^*\psi_{p'}^{\phantom{*}} s_{p'}^{\phantom{*}}u_{g_2}^{\phantom{*}} &,\text{ if } g = \theta_p(g_1)\theta_q(g_2) \ (\Leftrightarrow g \in \theta_r(G))\\ 0 &,\text {otherwise.} \end{cases}
\end{array}\]
Now we observe that $s_{q'}^* \psi_{q'}^*\psi_{p'}^{\phantom{*}} s_{p'}^{\phantom{*}}$ equals $\psi_{p'}^{\phantom{*}}s_{p'}^{\phantom{*}}s_{q'}^*\psi_{q'}^*$ as
\[\psi_{p'}^{\phantom{*}}s_{p'}^{\phantom{*}}s_{q'}^*\psi_{q'}^* = \psi_{p'}^{\phantom{*}}s_{q'}^*s_{p'}^{\phantom{*}}\psi_{q'}^* = s_{q'}^* \tilde{\theta}_{q'}(\psi_{p'}^{\phantom{*}})\tilde{\theta}_{p'}(\psi_{q'}^*) s_{p'}^{\phantom{*}}\]
and
\[\psi_{q'}^*\psi_{p'}^{\phantom{*}} = \tilde{\theta}_{q'}(\psi_{p'}^{\phantom{*}})\tilde{\theta}_{p'}(\psi_{q'}^*) \Longleftrightarrow \psi_{p'}^{\phantom{*}}\tilde{\theta}_{p'}(\psi_{q'}^{\phantom{*}}) = \psi_{q'p'}^{\phantom{*}} = \psi_{q'}^{\phantom{*}}\tilde{\theta}_{q'}(\psi_{p'}^{\phantom{*}}).\]
This shows that relation (II) is satisfied.
Checking the summation relation (III) amounts to 
\[\begin{array}{c}
\sum\limits_{\overline{g} \in G/\theta_p(G)} \beta_\psi(e_{g,p}^{\phantom{*}}) =  \psi_p^{\phantom{*}} \left(\sum\limits_{\overline{g} \in G/\theta_p(G)}e_{g,p}^{\phantom{*}} \right)\psi_p^* = 1
\end{array}\]
because $\psi_p$ is a unitary. Thus $\beta_\psi$ defines an endomorphism of $\CQ(\gxp)$. It is evident that $\beta_\psi \circ \beta_\varphi = \beta_{\psi\varphi}$. In particular, $\beta_{\psi^{-1}} \circ \beta_\psi = \id = \beta_\psi \circ \beta_{\psi^{-1}}$ implies that, $\beta_\psi$ is an automorphism, and hence an element of ${\rm Aut}_{C^*(G)}\CQ(\gxp)$ because $\beta_\psi(u_g) = u_g$ as $C^*(G)$ is abelian. 
If $\beta_\psi=\beta_\varphi$ for $\psi,\varphi \in {\rm Hom}_\theta(P,C(\widehat{G},\IT))$, then $\psi_ps_p = \beta_\psi(s_p) = \beta_\varphi(s_p) = \varphi_ps_p$. Multiplying the equation by $u_g$ on the left, by $s_p^*u_g^*$ on the right, where $g \in G$ ranges over a transversal for $G/\theta_p(G)$ leads to 
\[\begin{array}{c}
\psi_p = \psi_p \left(\sum\limits_{\overline{g} \in G/\theta_p(G)} e_{g,p}\right) = \varphi_p \left(\sum\limits_{\overline{g} \in G/\theta_p(G)} e_{g,p}\right) = \varphi_p.
\end{array}\]
Thus the group homomorphism  ${\rm Hom}_\theta(P,C(\widehat{G},\IT)) \to {\rm Aut}_{C^*(G)}\CQ(\gxp), \psi \mapsto \beta_\psi$ is injective.

As for the second claim, suppose $C^*(G)$ is maximal abelian in $\CQ(\gxp)$, and let $\beta \in {\rm End}_{C^*(G)}\CQ(\gxp)$. Then both parts of the claim will follow if we show $\beta=\beta_\psi$ for some $\psi \in {\rm Hom}_\theta(P,C(\widehat{G},\IT))$. Note that $\beta$ is necessarily unital as $1 \in C^*(G)$. For all $g,h \in G$ and $p \in P$ we have
\[s_p^*u_g^*\beta(s_p^{\phantom{*}})u_h = s_p^*u_g^*u_{\theta_p(h)}\beta(s_p^{\phantom{*}}) = u_hs_p^*u_g^*\beta(s_p^{\phantom{*}})\]
because $G$ is abelian and $\beta$ fixes $C^*(G)$ pointwise. Thus $w_{g,p}\doteq s_p^*u_g^*\beta(s_p^{\phantom{*}})$ is an element of $C^*(G)$, as the latter is assumed maximal abelian. Then the finite linear combination $w_p \doteq \sum_{\overline{g} \in G/\theta_p(G)} u_g\tilde{\theta}_p(w_{g,p}) \in C^*(G)$ satisfies
\[\begin{array}{c}
w_ps_p = \sum\limits_{\overline{g} \in G/\theta_p(G)} u_g\tilde{\theta}_p(w_{g,p})s_p = \sum\limits_{\overline{g} \in G/\theta_p(G)} u_g^{\phantom{*}} s_p^{\phantom{*}} s_p^* u_g^* \beta(s_p^{\phantom{*}}) = \beta(s_p^{\phantom{*}}).
\end{array}\]
The element $w_p$ is a unitary in $C^*(G)$ since $w_pu_gs_p = u_gw_ps_p = u_g\beta(s_p) = \beta(u_gs_p)$ and the summation relation for $p$ allow us to compute
\[\begin{array}{c}
w_p^*w_p^{\phantom{*}} = w_p^{\phantom{*}}w_p^* = w_p^{\phantom{*}} \left( \sum\limits_{\overline{g} \in G/\theta_p(G)} e_{g,p}^{\phantom{*}} \right)w_p^* = \beta\left(\sum\limits_{\overline{g} \in G/\theta_p(G)} e_{g,p}\right) = \beta(1) = 1.
\end{array}\] 
Finally, we claim that $\psi(p)\doteq w_p$ defines a $\theta$-twisted homomorphism so that $\beta = \beta_\psi$. Indeed, given $p,q \in P$ we deduce $\psi(pq) = \psi(p)\tilde{\theta}_p(\psi(q))$ from the equation
\[\psi(pq)s_{pq} = \beta(s_{pq}) = \beta(s_p)\beta(s_q) = \psi(p)\tilde{\theta}_p(\psi(q)) s_{pq}\]
in the same way as we proved injectivity of the group homomorphism. Thus we arrive at $\beta=\beta_\psi$.
\end{proof}

The proof of Theorem~\ref{thm:aut of Q fixing C*(G)} shows that a family of unitaries in $C^*(G)$ defines a $*$-homomorphism of $\CQ(\gxp)$ if and only if it comes from a $\theta$-twisted homomorphism. %As an improvement of Theorem~\ref{thm:aut of Q fixing C*(G)},
%Furthermore, we will now prove that ${\rm Aut}_{C^*(G)}\CQ(\gxp)$ is not only abelian but maximal abelian in ${\rm Aut}\CQ(\gxp)$ in a number of important cases, see Theorem~\ref{thm:quasi-free-max-1}.
As a strengthening of Theorem~\ref{thm:aut of Q fixing C*(G)}, we will now prove that ${\rm Aut}_{C^*(G)}\CQ(\gxp)$  is not only abelian but maximal abelian in ${\rm Aut}\CQ(\gxp)$ in a number of important cases, see Theorem~\ref{thm:quasi-free-max-1}.
%is a maximal abelian subgroup of ${\rm Aut}\CQ(\gxp)$ in a number of important cases, see Theorem~\ref{thm:quasi-free-max-1}.

\begin{lemma} \label{lem:metgroup}
Let $K$ be a metrizable compact abelian group. If $\chi\in K$ has the property that for every $f\in C(K, \IT)$ there exists $\lambda_f \in \IT$ with $f(\chi \xi)=\lambda_f f(\xi)$ for all $\xi\in K$, then $\chi= 1_K$.
\end{lemma}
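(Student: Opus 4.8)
The plan is to sidestep the characters of $K$ entirely and instead feed the hypothesis a one-parameter family of unimodular functions of the form $e^{2\pi i t h}$, $t\in\IR$, built from a fixed continuous real function $h$ on $K$. The target is to show that the hypothesis forces every $h\in C(K,\IR)$ to be invariant under translation by $\chi$, i.e. $h(\chi\xi)=h(\xi)$ for all $\xi$. Since $C(K,\IR)$ separates the points of the compact Hausdorff space $K$, evaluating at $\xi=1_K$ then yields $h(\chi)=h(1_K)$ for all $h$, and hence $\chi=1_K$.

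First I would fix $h\in C(K,\IR)$ and set $\phi(\xi)\doteq h(\chi\xi)-h(\xi)$, a continuous real function on $K$. For each $t\in\IR$ the function $f_t(\xi)\doteq e^{2\pi i t h(\xi)}$ lies in $C(K,\IT)$, so the hypothesis supplies some $\lambda_t\in\IT$ with $e^{2\pi i t h(\chi\xi)}=\lambda_t\,e^{2\pi i t h(\xi)}$ for all $\xi$; that is, $e^{2\pi i t\phi(\xi)}=\lambda_t$ is independent of $\xi$. Comparing two points $\xi,\xi'\in K$ then gives $t\bigl(\phi(\xi)-\phi(\xi')\bigr)\in\IZ$ for \emph{every} $t\in\IR$, which is possible only if $\phi(\xi)=\phi(\xi')$. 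Hence $\phi\equiv c$ is constant, i.e. $h(\chi\xi)-h(\xi)=c$ for all $\xi$.

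It then remains to show $c=0$. Iterating the relation with $\xi$ replaced by $\chi^{n}$ yields $h(\chi^{n})=h(1_K)+nc$ for all $n\geq 0$. As $h$ is continuous on the compact space $K$ it is bounded, so the right-hand side must stay bounded in $n$, forcing $c=0$ (if $\chi$ has finite order $m$ this is even more immediate, since $\chi^{m}=1_K$ already gives $mc=0$). Thus $h(\chi\xi)=h(\xi)$ for every $\xi$ and every $h\in C(K,\IR)$, and separation of points concludes the proof.

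The one genuine obstacle is that the eigenvalue $\lambda_f$ only lives in $\IT$, so passing to a logarithm controls $\phi$ merely modulo $\IZ$; the device that overcomes this is to test the hypothesis simultaneously against the entire scaled family $\{e^{2\pi i t h}\}_{t\in\IR}$, which upgrades ``$\phi$ constant modulo $\IZ$'' to ``$\phi$ genuinely constant''. Everything after that—boundedness of $h$ by compactness, and Urysohn separation—is routine. I note that, phrased this way, the argument appears to use neither metrizability nor commutativity of $K$, only that $K$ is a compact Hausdorff group.
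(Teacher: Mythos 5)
Your proof is correct. The key maneuver --- testing the hypothesis against the whole scaled family $f_t = e^{2\pi i t h}$, $t\in\IR$, to upgrade ``$h(\chi\xi)-h(\xi)$ constant modulo $\IZ$'' to ``genuinely constant,'' then killing the constant by iterating $h(\chi^n)=h(1_K)+nc$ against boundedness of $h$ --- is sound, and the final appeal to separation of points by $C(K,\IR)$ finishes it. This is a genuinely different route from the paper, which gives no argument at all: it simply cites Theorem~3.6 of \cite{ACR2}, a more general statement asserting the analogous rigidity for an arbitrary continuous self-map $\Phi$ of a metrizable compact space. Your argument has the advantage of being short and self-contained, and your closing observation is accurate: you use neither metrizability nor commutativity, only that $K$ is a compact Hausdorff group (the group structure enters solely through the orbit $\chi^n$ used to force $c=0$). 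In fact your argument adapts verbatim to the cited general setting --- replace translation by $\chi$ with any continuous $\Phi\colon X\to X$ and iterate at an arbitrary base point to get $h\circ\Phi=h$ for all $h$, hence $\Phi=\mathrm{id}$ --- so it recovers the external result rather than merely the special case. It is also a more concrete instance of the mechanism sketched in Remark~\ref{rem:metgroup idea of proof}. The only stylistic caveat is that, as written for the paper, one would either keep the citation or substitute your argument; both are valid, and yours removes a dependency on \cite{ACR2}.
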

\begin{proof}
In fact, this is a particular case of a more general result proved in Theorem 3.6 of \cite{ACR2} that the same is true for every continuous map $\Phi$ on a metrizable compact space $X$.
\end{proof}

\begin{remark}\label{rem:metgroup idea of proof}
The result recalled in the proof of Lemma~\ref{lem:metgroup} extends to general $C^*$-algebras: every endomorphism $\varphi$ of a unital $C^*$-algebra $\FA$ such that $\varphi(u)=\chi_u u$ for every $u\in\CU({\FA})$, where $\chi_u\in\IT$, must be the identity map of $\FA$. This is seen as follows. If $a\in\FA_{sa}$ and $\|a\|\leq 1$, then $a=\frac{1}{2}(f_+(a)+f_-(a))$ with $f_+(t)=t+i\sqrt{1-t^2}$ and $f_-(t)=t-i\sqrt{1-t^2}$. Now $\varphi(a)=\frac{1}{2}(\chi_+f_+(a)+\chi_-f_-(a))$  says that $C^*(a)$, which is of course both separable and commutative, is globally invariant under $\varphi$. Therefore, the endomorphism restricts to $C^*(a)$ trivially. This concludes the proof as $a\in\FA_{sa}$ was an arbitrary self-adjoint contraction.
\end{remark}

The following result is a generalization of  \cite{ACR}*{Theorem~6.16}.

\begin{thm}\label{thm:quasi-free-max-1}
Suppose that $(\gpt)$ is an algebraic dynamical system such that $G$ is abelian. If $C^*(G) \subset \CQ(\gxp)$ is maximal abelian and $\CQ(\gxp)$ is simple, then the group ${\rm Aut}_{C^*(G)}\CQ(\gxp)$ is maximal abelian in ${\rm Aut}\CQ(\gxp)$.
\end{thm}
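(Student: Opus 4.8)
The plan is to establish the two properties that together characterize a maximal abelian subgroup: that $H \doteq {\rm Aut}_{C^*(G)}\CQ(\gxp)$ is abelian, and that it is self-centralizing, i.e. that every $\alpha \in {\rm Aut}\CQ(\gxp)$ commuting with all of $H$ already lies in $H$. The first property is immediate from Theorem~\ref{thm:aut of Q fixing C*(G)}: since $C^*(G)$ is maximal abelian, $H \cong {\rm Hom}_\theta(P,C(\widehat{G},\IT))$, and the latter is abelian under pointwise multiplication. So essentially all the work goes into the self-centralizing property, and the key device will be a supply of inner automorphisms sitting inside $H$.

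First I would observe that for every unitary $w \in \CU(C^*(G))$ the inner automorphism ${\rm Ad}_w$ belongs to $H$: indeed ${\rm Ad}_w \in {\rm Aut}\CQ(\gxp)$, and since $w$ lies in the \emph{abelian} algebra $C^*(G)$ it commutes with every $u_g$, so ${\rm Ad}_w$ fixes $C^*(G)$ pointwise. Now let $\alpha \in {\rm Aut}\CQ(\gxp)$ commute with all of $H$; in particular $\alpha \circ {\rm Ad}_w = {\rm Ad}_w \circ \alpha$ for every $w \in \CU(C^*(G))$. Expanding $\alpha(wxw^*) = w\alpha(x)w^*$ and using that $\alpha$ is a homomorphism gives $\alpha(w)\alpha(x)\alpha(w)^* = w\alpha(x)w^*$ for all $x$, which rearranges to
\[ (w^*\alpha(w))\,\alpha(x)\,(w^*\alpha(w))^* = \alpha(x) \qquad \text{for all } x \in \CQ(\gxp). \]
As $\alpha$ is surjective, $\alpha(x)$ ranges over all of $\CQ(\gxp)$, so the unitary $w^*\alpha(w)$ is central.

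Here the simplicity of $\CQ(\gxp)$ enters decisively: a unital simple $C^*$-algebra has center $\IC 1$, so $w^*\alpha(w)$ is a scalar, necessarily of modulus one, and hence $\alpha(w) = \lambda_w w$ for some $\lambda_w \in \IT$, for every unitary $w \in \CU(C^*(G))$. In particular $\alpha(w) \in C^*(G)$, and since a unital $C^*$-algebra is the closed span of its unitaries this yields $\alpha(C^*(G)) \subseteq C^*(G)$; applying the same reasoning to $\alpha^{-1}$, which also commutes with every ${\rm Ad}_w$, gives $\alpha(C^*(G)) = C^*(G)$. Thus $\alpha$ restricts to an endomorphism of the unital $C^*$-algebra $C^*(G)$ that multiplies each unitary by a scalar in $\IT$.

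At this point I would invoke Remark~\ref{rem:metgroup idea of proof}: an endomorphism of a unital $C^*$-algebra sending each unitary $u$ to $\chi_u u$ with $\chi_u \in \IT$ must be the identity. Applied to $\alpha|_{C^*(G)}$ this forces $\alpha|_{C^*(G)} = {\rm id}$, so $\alpha$ fixes $C^*(G)$ pointwise, i.e. $\alpha \in H$. Combined with the abelianness of $H$, this shows that $H$ coincides with its own centralizer and is therefore maximal abelian in ${\rm Aut}\CQ(\gxp)$. The main obstacle, and the one genuinely clever point, is recognizing that the inner automorphisms ${\rm Ad}_w$ with $w \in \CU(C^*(G))$ already lie in $H$ and furnish exactly enough test automorphisms to pin $\alpha$ down on $C^*(G)$; once that is seen, simplicity (to pass from central to scalar) and Remark~\ref{rem:metgroup idea of proof} (to pass from scalar multiplier to identity) close the argument cleanly.
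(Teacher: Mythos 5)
Your proof is correct and follows essentially the same route as the paper's: test $\alpha$ against the inner automorphisms ${\rm Ad}(w)$ with $w\in\CU(C^*(G))$ (which lie in ${\rm Aut}_{C^*(G)}\CQ(\gxp)$ since $C^*(G)$ is abelian), use simplicity to upgrade centrality of $w^*\alpha(w)$ to $\alpha(w)=\lambda_w w$, and then invoke a rigidity statement to force $\alpha|_{C^*(G)}=\id$. The only cosmetic difference is that you close with Remark~\ref{rem:metgroup idea of proof} (the $C^*$-algebraic form of the rigidity result) where the paper passes to the character $\chi\in\widehat{G}$ and applies Lemma~\ref{lem:metgroup}; the paper itself notes these are two versions of the same fact.
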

\begin{proof}
Since $C^*(G) \subset \CQ(\gxp)$ is maximal abelian, we can apply Theorem~\ref{thm:aut of Q fixing C*(G)} to deduce that ${\rm Aut}_{C^*(G)}\CQ(\gxp) \cong {\rm Hom}_\theta(P,C(\widehat{G},\IT))$, which is abelian. Let $\alpha$ be such that $\alpha \circ\beta=\beta \circ\alpha$ for all $\beta\in {\rm Aut}_{C^*(G)}\CQ(\gxp)$. In particular, for $\beta = \Ad(x)$ with $x\in \CU(C^*(G))$ we get $\Ad(\alpha(x))=\alpha \circ\Ad(x)\circ \alpha^{-1}= \Ad(x)$. Thus we have $\Ad(x^{-1}\alpha(x)) = \id$, which implies that $x^{-1}\alpha(x)$ belongs to the center of $\CQ(\gxp)$. As $\CQ(\gxp)$ is simple, its center is trivial, so there is $\lambda_x \in \IT$ with $\alpha(x) = \lambda_x x$ for every $x \in \CU(C^*(G))$. Since $\alpha$ is a $*$-homomorphism, the values $\lambda_x$ arise from a character $\chi \in \widehat{G}$ via $\alpha(u_g) = \chi(g)u_g$ for all $g \in G$. Therefore, reinterpreting $\alpha(x) = \lambda_x x$ for $x \in \CU(C^*(G))$ as an equation for functions $f \in C(\widehat{G},\IT) \cong \CU(C^*(G))$, we arrive at
\[f(\chi \xi) = \lambda_f~f(\xi) \quad \text{for all } \xi \in \widehat{G}, f \in C(\widehat{G},\IT).\]
Since $G$ is countable, its compact abelian Pontryagin dual $\widehat{G}$ is a metrizable compact abelian group. Thus Lemma~\ref{lem:metgroup} implies $\chi=1_{\widehat{G}}$ so that $\alpha \in {\rm Aut}_{C^*(G)}\CQ(\gxp)$.
\end{proof}

We have chosen this abstract formulation of Theorem~\ref{thm:quasi-free-max-1} as we do not have a complete answer to the question when $C^*(G)$ is maximal abelian in $\CQ(\gxp)$ for arbitrary algebraic dynamical systems, and the precise condition for simplicity is somewhat technical in the greatest generality, see for instance \cites{Star,BS1}. Yet the proof we provide here does not need any of the relatively strong extra hypotheses that enter the proof of Theorem~\ref{thm:C*(G) masa}.

\begin{corollary}\label{cor:quasi-max-free}
If $(\gpt)$ belongs to the class described in Example~\ref{ex:C*(G) masa setup} and $\CQ(\gxp)$ is simple, then ${\rm Aut}_{C^*(G)}\CQ(\gxp)$ is maximal abelian in ${\rm Aut}\CQ(\gxp)$.
\end{corollary}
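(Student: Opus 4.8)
The plan is to deduce the statement directly from Theorem~\ref{thm:quasi-free-max-1}, whose conclusion is exactly the assertion we want. That theorem requires three inputs: that $G$ be abelian, that $C^*(G) \subset \CQ(\gxp)$ be maximal abelian, and that $\CQ(\gxp)$ be simple. The last is a standing hypothesis of the corollary, and the first is immediate since membership in the class of Example~\ref{ex:C*(G) masa setup} presupposes a commutative algebraic dynamical system. Thus the entire task reduces to verifying that $C^*(G)$ is maximal abelian.

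For this I would invoke Theorem~\ref{thm:C*(G) masa}, which produces precisely this maximal abelianness, provided $(\gpt)$ satisfies the conditions of Example~\ref{ex:C*(G) masa setup} --- which it does by assumption --- and in addition is minimal, i.e.\ $\bigcap_{p \in P}\theta_p(G) = \{1_G\}$. The only genuine step is therefore to extract minimality from simplicity. I would argue this by contraposition: were the system not minimal, the nontrivial subgroup $N \doteq \bigcap_{p \in P}\theta_p(G)$ would be carried into itself by every $\theta_q$, giving rise to a quotient dynamical system and hence to a proper quotient of $\CQ(\gxp)$, in contradiction with simplicity. That simplicity forces this minimality condition in the present setting is exactly the link recorded in Remark~\ref{rem:masa for P free abelian monoid}, resting on \cite{BS1}*{Remark~4.15}; in particular, simplicity of $\CQ(\gxp)$ already guarantees faithfulness of the canonical representation, so the corresponding clause of Example~\ref{ex:C*(G) masa setup} is automatic here.

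With minimality in hand, Theorem~\ref{thm:C*(G) masa} yields that $C^*(G)$ is maximal abelian, and Theorem~\ref{thm:quasi-free-max-1} then delivers the conclusion that ${\rm Aut}_{C^*(G)}\CQ(\gxp)$ is maximal abelian in ${\rm Aut}\CQ(\gxp)$. I expect the main obstacle to be this passage from simplicity to minimality: every other ingredient is either a hypothesis or an already-proven theorem, whereas this implication is the one place where one must understand how the intersection $\bigcap_{p \in P}\theta_p(G)$ governs the ideal structure of the boundary quotient. If one wished to sidestep the general ideal-structure machinery, an alternative would be to restrict to the case of free abelian $P$, where Remark~\ref{rem:masa for P free abelian monoid} records that minimality and simplicity are outright equivalent by \cite{Sta2}*{Corollary~6.2}.
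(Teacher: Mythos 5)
Your proof is correct and follows the same route as the paper: verify the hypotheses of Theorem~\ref{thm:quasi-free-max-1} by deducing that $C^*(G)$ is maximal abelian from Theorem~\ref{thm:C*(G) masa}. You are in fact more careful than the paper's own two-line proof, which silently uses that simplicity supplies the minimality hypothesis of Theorem~\ref{thm:C*(G) masa}; your explicit treatment of that passage (via the quotient system on $G/\bigcap_{p\in P}\theta_p(G)$ and the link recorded in Remark~\ref{rem:masa for P free abelian monoid}) spells out a step the paper leaves to the reader.
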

\begin{proof}
In the case of Example~\ref{ex:integral dynamics}, $\CQ(\gxp)$ is known to be simple and $(\gpt)$ satisfies the conditions of Example~\ref{ex:C*(G) masa setup}. Thus $C^*(G)$ is maximal abelian in $\CQ(\gxp)$ by Theorem~\ref{thm:C*(G) masa}, and Theorem~\ref{thm:quasi-free-max-1} yields the claim.
\end{proof}

As a special case of Corollary~\ref{cor:quasi-max-free}, we would like to highlight the case of integral dynamics:

\begin{corollary}\label{cor:quasi-max-free integral dynamics}
Suppose that $(\gpt)$ satisfies the assumptions of Example~\ref{ex:integral dynamics}. Then ${\rm Aut}_{C^*(\IZ)}\CQ(\IZ\rtimes P) \cong {\rm Hom}_\theta(P,C(\IT,\IT))$ is a maximal abelian subgroup of ${\rm Aut}\CQ(\IZ\rtimes P)$. 
\end{corollary}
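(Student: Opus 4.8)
The plan is to recognize this statement as a direct specialization of Corollary~\ref{cor:quasi-max-free} to the concrete setting $G = \IZ$, whose Pontryagin dual is $\widehat{\IZ} \cong \IT$. The only real task is to check that integral dynamics meets every hypothesis invoked there, after which both assertions follow essentially by citation.

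First I would verify that $(\IZ, P, \theta)$ as in Example~\ref{ex:integral dynamics} belongs to the class of Example~\ref{ex:C*(G) masa setup}. Here $\IZ$ and $P = \langle S \rangle$ are abelian, the system is of finite type since $N_p = [\IZ : p\IZ] = p < \infty$, and the canonical representation is faithful by Example~\ref{ex:integral dynamics}(iv). The remaining condition concerns the endomorphisms $\Phi_{p,q}$: for distinct $p, q \in P$ one computes $\Phi_{p,q}(n) = (p-q)n$, which is injective on all of $\IZ$, so any nonzero integer (necessarily of infinite order) witnesses the required property. Next I would record that the system is minimal, i.e. $\bigcap_{p \in P} p\IZ = \{0\}$: any nonzero integer is divisible by only finitely many of the mutually coprime generators, so it drops out of the intersection once $p$ is a sufficiently large product of generators. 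Minimality together with the membership in Example~\ref{ex:C*(G) masa setup} then gives, via Theorem~\ref{thm:C*(G) masa}, that $C^*(\IZ)$ is maximal abelian in $\CQ(\IZ \rtimes P)$.

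With these checks in hand, the isomorphism is immediate from Theorem~\ref{thm:aut of Q fixing C*(G)}: since $C^*(\IZ)$ is maximal abelian, that theorem identifies ${\rm Aut}_{C^*(\IZ)}\CQ(\IZ \rtimes P)$ with ${\rm Hom}_\theta(P, C(\widehat{\IZ}, \IT)) = {\rm Hom}_\theta(P, C(\IT, \IT))$. For the maximal-abelian claim I would appeal to Corollary~\ref{cor:quasi-max-free} (equivalently, to Theorem~\ref{thm:quasi-free-max-1} directly), whose two standing hypotheses, namely that $C^*(\IZ)$ be maximal abelian and that $\CQ(\IZ \rtimes P)$ be simple, are now both available, the latter because $\CQ(\IZ \rtimes P)$ is a unital UCT Kirchberg algebra by Example~\ref{ex:integral dynamics}(iii).

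I do not anticipate a genuine obstacle here: the substance of the statement lives entirely in Theorem~\ref{thm:quasi-free-max-1} and Theorem~\ref{thm:aut of Q fixing C*(G)}, and the corollary is purely a matter of confirming that the abstract hypotheses specialize correctly. The only points demanding a moment's care are the verification of minimality and of the injectivity of $\Phi_{p,q}$, both of which reduce to elementary divisibility facts in $\IZ$.
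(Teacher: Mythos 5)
Your proposal is correct and follows exactly the route the paper takes: the corollary is stated as a special case of Corollary~\ref{cor:quasi-max-free}, whose proof consists precisely of checking that integral dynamics satisfies the hypotheses of Example~\ref{ex:C*(G) masa setup} and is simple, then invoking Theorem~\ref{thm:C*(G) masa}, Theorem~\ref{thm:aut of Q fixing C*(G)}, and Theorem~\ref{thm:quasi-free-max-1}. Your explicit verifications of minimality and of the injectivity of $\Phi_{p,q}(n)=(p-q)n$ are exactly the elementary facts the paper leaves implicit.
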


\begin{remark}
We would like to point out that a general statement can be made here about arbitrary inclusions of $C^*$-algebras $\FB\subset\FA$ with $\FB$ is abelian and $\FA$ having trivial center. Indeed, one has the equality $C_{\Aut_{\FB}\FA}(\Aut\FA)=\CZ(\Aut_{\FB}\FA)$, where $C_{\Aut_{\FB}\FA}(\Aut\FA)$ is the centralizer of $\Aut_{\FB}\FA$ in $\Aut\FA$ and $\CZ(\Aut_{\FB}\FA)$ the center of $\Aut_{\FB}\FA$. As a result, the group $\Aut_{\FB}\FA$ is \emph{maximal} abelian whenever it is abelian.
\end{remark}

\subsection{A closer look at integral dynamics}\label{subsec:integral dynamics}%\texorpdfstring{The case of $\CQ(\Z\rtimes P)$}{
In this subsection, we assume that $(\gpt)$ is of the form specified in Example~\ref{ex:integral dynamics} so that we are dealing with subdynamics of $\Z\rtimes \N^\times$. In this situation, \eqref{eq:funct equation abstract} takes the more explicit form
\begin{equation}\label{eq:functional-equation} 
\psi_p(z)\psi_q(z^p)= \psi_q(z)\psi_p(z^q) \quad  \text{for all } z\in \IT \text{ and } p,q \in P.
\end{equation}
We observe that if one chooses $\psi_p(z)=f(z)$ for all generators $p \in S$ of $P$ for some $f \in C(\IT,\IT)$, then $f(z)=c$ for some constant $c\in\IT$ unless $\lvert S \rvert =1$.

It is not difficult to exhibit non-trivial solutions of this system of functional equations, but to the best of the authors' knowledge a complete description of all solutions of \eqref{eq:functional-equation} has yet to be found.

The largest specimen for Example~\ref{ex:integral dynamics} yields the $C^*$-algebra $\CQ_{\IN}$ introduced in \cite{CuntzQ}, that is, we consider the case $P=\N^\times$. For every other integral dynamics, the semidirect product $\Z\rtimes P$ embeds into $\Z\rtimes \N^\times$, along with an embedding of the corresponding $C^*$-algebra $\CQ(\Z \rtimes_\theta P)$ into $\CQ(\Z\rtimes \N^\times)\cong \CQ_\N$.
 
Cuntz remarked in \cite{CuntzQ}*{Section~4} that $\CQ_{\IN}$ is acted upon by a one-parameter group of automorphisms, given by $\gamma_t(u)\doteq u$ and $\gamma_t(s_p)\doteq p^{it}s_p$ for $p\in\IN^\times, t \in \R$, to which he refers as the \emph{canonical action} of $\IR$. Here we consider the slightly larger class of automorphisms ${\rm Aut}_{C^*(\Z)} \CQ(\Z\rtimes P)$. Due to Theorem~\ref{thm:C*(G) masa}, which applies to $\Z\rtimes P$, Theorem~\ref{thm:aut of Q fixing C*(G)} implies that we are dealing with automorphisms of the form $\beta_\psi$ for $\psi \in {\rm Hom}_\theta(P,C(\IT,\IT))$.

\begin{proposition}\label{prop:aut from unitary functions - monomial solutions}
Let $\psi \in {\rm Hom}_\theta(P,C(\IT,\IT))$. If $\psi_p$ is a monomial in the generator $z$ for all $p \in P$, then it is of the form $\psi_p(z) =z^{k(p-1)}$ for some $k \in \Z$.
\end{proposition}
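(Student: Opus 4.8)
The plan is to translate the statement into an elementary arithmetic problem about exponents and then solve the resulting cocycle equation. By hypothesis each $\psi_p$ is a monomial, so I would write $\psi_p(z)=z^{n_p}$ with $n_p\in\Z$ its winding number; note that $\psi_{1_P}=1$ forces $n_{1_P}=0$, in agreement with the desired formula. Since $(\gpt)$ is an integral dynamics, the defining relation for $\psi\in{\rm Hom}_\theta(P,C(\IT,\IT))$ is the explicit functional equation \eqref{eq:functional-equation}. The first step is simply to substitute the monomials $z^{n_p}$ into \eqref{eq:functional-equation} and read off the exponents.

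Carrying this out, \eqref{eq:functional-equation} collapses to the single linear relation
\[ n_p + p\,n_q = n_q + q\,n_p \qquad\text{for all } p,q\in P,\]
equivalently $(q-1)\,n_p=(p-1)\,n_q$. Hence the ratio $n_p/(p-1)$ is independent of $p\in P\setminus\{1_P\}$; calling this common value $k$, we obtain $n_p=k(p-1)$ and therefore $\psi_p(z)=z^{k(p-1)}$, as claimed. It is worth recording in passing the one-sided form $n_{pq}=n_p+p\,n_q$ of the twisted-homomorphism relation from Definition~\ref{def:twisted Hom(P,U(C*(G)))}, which both confirms that the formula $n_p=k(p-1)$ is consistent on all products (indeed $k(pq-1)=k(p-1)+p\,k(q-1)$) and shows that it suffices to pin down $k$ on the generating set $S$.

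The step I expect to be the genuine obstacle is the integrality of $k$: the computation above only yields $k=n_p/(p-1)\in\Q$ a priori. Writing $d=\gcd\{\,p-1 : p\in P\setminus\{1_P\}\,\}$, the integrality of the winding numbers $n_p=k(p-1)$ forces $k\in\tfrac1d\Z$, so the problem reduces to the arithmetic of $S$. For the principal case $P=\IN^\times$ this is immediate, since $2\in P$ gives $k=n_2\in\Z$; more generally it is enough that $S$ contain two elements $p,q$ with $\gcd(p-1,q-1)=1$, for then choosing $a,b\in\Z$ with $a(p-1)+b(q-1)=1$ (B\'ezout) gives $k=a\,k(p-1)+b\,k(q-1)=a\,n_p+b\,n_q\in\Z$. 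This number-theoretic point, rather than the routine cocycle computation of the first two paragraphs, is where the structure of the generating set must genuinely be used, and it is the part of the argument I would treat with the most care.
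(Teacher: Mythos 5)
Your computation is precisely the paper's: the published proof substitutes $\psi_p(z)=z^{k_p}$, records the relation $k_{pq}=k_p+pk_q$, and then declares it ``straightforward to check'' that all solutions are $k_p=k(p-1)$ with $k\in\Z$. What you add in your final paragraph --- the integrality of $k$ --- is not a quibble but a genuine gap in that one-line conclusion. As you observe, symmetrizing gives $(q-1)k_p=(p-1)k_q$, so $k\doteq k_p/(p-1)$ is a constant rational, and $k\in\Z$ is only forced when $d\doteq\gcd\{\,p-1 : p\in P\setminus\{1\}\,\}$ (equivalently $\gcd\{\,p-1 : p\in S\,\}$) equals $1$. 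The paper's proof silently works in $\N^\times$ --- it even writes ``for all $p,q\in\N^\times$'' --- where $2\in P$ gives $k=k_2\in\Z$ at once; but the proposition is stated for an arbitrary integral dynamics as in Example~\ref{ex:integral dynamics}, and there it fails as written: for $P=\langle 3\rangle$ the family $\psi_{3^m}(z)=z^{(3^m-1)/2}$ is a monomial $\theta$-twisted homomorphism (one checks $(3^{m+n}-1)/2=(3^m-1)/2+3^m(3^n-1)/2$) with $k=1/2\notin\Z$, and it is not of the form $z^{k(p-1)}$ with $k\in\Z$. So the step you flagged as the genuine obstacle is exactly that: the correct general conclusion is $k\in\tfrac1d\Z$, and your B\'ezout criterion on $S$ is the right sufficient condition for recovering $k\in\Z$; for the intended case $P=\N^\times$ your argument closes the proof completely.
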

\begin{proof}
Indeed, if we set $\psi_p(z)\doteq z^{k_p}$, then \eqref{eq:functional-equation} turns into $k_{pq}=k_p+pk_q$ for all $p,q \in \N^\times$. It is then straightforward to check that all solutions of this system of integral equations are of the form $k_p = k(p-1)$ for all $p$, where $k \in \Z$ is arbitrary. 
\end{proof}

The automorphisms $\beta_{\psi_k}$ arising from the monomial solutions obtained in Proposition~\ref{prop:aut from unitary functions - monomial solutions} are inner as $\beta_{\psi_k}=\Ad(u^{-k})$.

\begin{proposition}\label{prop:aut from unitary functions - constant solidarity}
Let $\psi \in {\rm Hom}_\theta(P,C(\IT,\IT))$. If there is $n\geq 2$ such that $\psi_n$ is constant, then $\psi_p$ is constant for all $p \in P$. 
\end{proposition}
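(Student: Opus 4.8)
The plan is to exploit the functional equation \eqref{eq:functional-equation} by specialising one of its two indices to $n$. Since $n \in P$ and $\psi_n$ is assumed constant, say $\psi_n \equiv c$ with $c \in \IT$, substituting $q = n$ into \eqref{eq:functional-equation} gives $\psi_p(z)\,\psi_n(z^p) = \psi_n(z)\,\psi_p(z^n)$, that is $c\,\psi_p(z) = c\,\psi_p(z^n)$ for every $p \in P$ and $z \in \IT$. As $c \neq 0$, this collapses to the single invariance relation
\[ \psi_p(z) = \psi_p(z^n) \qquad \text{for all } p \in P,\ z \in \IT. \]
Thus the ``solidarity'' to be proved reduces to showing that a continuous $\IT$-valued function on the circle that is invariant under the $n$-th power map $z \mapsto z^n$, with $n \geq 2$, must be constant.

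The second step is to settle this invariance-rigidity by Fourier analysis, applied to $\psi_p$ regarded as a continuous (hence $L^2$) function $\IT \to \C$. Writing $\psi_p(z) = \sum_{m \in \Z} c_m z^m$, the function $z \mapsto \psi_p(z^n)$ has Fourier expansion $\sum_{m} c_m z^{nm}$, so comparing coefficients in $\psi_p(z) = \psi_p(z^n)$ yields $c_m = 0$ whenever $n \nmid m$, and $c_m = c_{m/n}$ whenever $n \mid m$. For a fixed $m \neq 0$, write $m = n^s m'$ with $n \nmid m'$; iterating $c_m = c_{m/n}$ peels off the powers of $n$ and gives $c_m = c_{m'} = 0$, since $n \nmid m'$ and $m' \neq 0$. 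Hence $c_m = 0$ for all $m \neq 0$, so $\psi_p \equiv c_0$ is constant, which is exactly the assertion.

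I expect the only genuinely delicate point to be the first step: recognising that specialising $q = n$ turns the two-variable functional equation into the clean one-variable invariance $\psi_p(z) = \psi_p(z^n)$. Once this is in hand, the Fourier computation is entirely routine, and the hypothesis $n \geq 2$ enters precisely where it must, guaranteeing that the power map is non-trivial so that the peeling argument forces the vanishing of all nonzero coefficients. One could alternatively first observe, via the winding-number homomorphism on $C(\IT,\IT)$, that the degree of $\psi_p$ equals $k(p-1)$ for a fixed $k \in \Z$ as in the proof of Proposition~\ref{prop:aut from unitary functions - monomial solutions}, whence $\psi_n$ constant forces $k = 0$; but this is not needed, since the Fourier argument applies verbatim to the $\C$-valued $\psi_p$ without any winding-number bookkeeping.
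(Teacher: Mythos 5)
Your proof is correct and follows essentially the same route as the paper: specialise $q=n$ in \eqref{eq:functional-equation} to reduce to the invariance $\psi_p(z)=\psi_p(z^n)$, and then conclude that such a continuous function must be constant. The only difference is that the paper simply asserts this last rigidity step, whereas you supply the (routine) Fourier-coefficient argument for it.
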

\begin{proof}
By \eqref{eq:functional-equation}, we get $\psi_p(z)\psi_n(z^p)=\psi_n(z)\psi_p(z^n)$, which reduces to $\psi_p(z)=\psi_p(z^n)$ for all $p\in P$ as $\psi_n$ is constant. But the only solutions to this type of functional equation are constant since $n \geq 2$.
\end{proof}

%change wording
In the second part of this final section, we show that every automorphisms $\alpha$ of $\CQ(\Z\rtimes P)$ that fixes a natural subalgebra $\CO_n = C^*(\{u^ks_n \mid 0 \leq k \leq n-1\})$ pointwise for some $n \in P, n \geq 2$ is necessarily of the form $\beta_\psi$ appearing in Theorem~\ref{thm:aut of Q fixing C*(G)} for some $\psi \in {\rm Hom}(P,\IT)$, so that, in particular, $\alpha$ fixes $C^*(\Z)$ pointwise.

\begin{thm}\label{thm:aut of Q_N fixing O_n}
For every $n \in P$, $2\leq n < \infty$, the group ${\rm Aut}_{\CO_n}\CQ(\Z\rtimes P)$ is a subgroup of ${\rm Aut}_{C^*(\Z)}\CQ(\Z\rtimes P) \cong {\rm Hom}_\theta(P,C(\IT,\IT))$. For $\psi \in {\rm Hom}_\theta(P,C(\IT,\IT))$, the automorphism $\beta_\psi$ belongs to ${\rm Aut}_{\CO_n}\CQ(\Z\rtimes P)$ if and only if $\psi \in {\rm Hom}(P,\IT)$ with $\psi_n=1$, that is, $\beta_\psi = \gamma_\chi$ for some $\chi \in T$ with $\chi(n) = 1$.
\end{thm}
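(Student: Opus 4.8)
The plan is to prove the two assertions of the theorem separately. The first is the group inclusion ${\rm Aut}_{\CO_n}\CQ(\IZ\rtimes P)\subseteq {\rm Aut}_{C^*(\IZ)}\CQ(\IZ\rtimes P)$; since $C^*(\IZ)$ is maximal abelian in $\CQ(\IZ\rtimes P)$ by Theorem~\ref{thm:C*(G) masa}, once this inclusion is established Theorem~\ref{thm:aut of Q fixing C*(G)} supplies the identification with ${\rm Hom}_\theta(P,C(\IT,\IT))$. So the whole content of the inclusion is to show that any $\alpha$ fixing $\CO_n$ pointwise fixes the generator $u$, hence all of $C^*(\IZ)$.

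Write $T_i\doteq u^is_n$ for $0\le i\le n-1$, so that $\CO_n=C^*(\{T_i\})$, and put $w\doteq\alpha(u)$. First I would record the relations $uT_i=T_{i+1}$ for $0\le i\le n-2$ and $uT_{n-1}=T_0u$ that hold inside $\CQ(\IZ\rtimes P)$; applying $\alpha$ and using $\alpha(T_i)=T_i$ shows that $w$ obeys the very same relations, $wT_i=T_{i+1}$ and $wT_{n-1}=T_0w$. Expanding $w=w\sum_{i=0}^{n-1}T_iT_i^*$ and doing likewise for $u$, subtraction yields the self-reproducing identity $w-u=T_0(w-u)T_{n-1}^*$, and therefore $w-u=T_0^k(w-u)T_{n-1}^{*k}$ for every $k\ge 1$.

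The heart of the argument is to turn this identity into $w=u$. I would work in the canonical representation $\pi$ on $\ell^2(\IZ)$, which is faithful for integral dynamics by Example~\ref{ex:integral dynamics}(iv); here $\pi(u)\xi_m=\xi_{m+1}$ and $\pi(s_n)\xi_m=\xi_{nm}$, so $\pi(T_{n-1}^{*k})\xi_j$ equals $\xi_{(j+1)/n^k-1}$ when $n^k\mid(j+1)$ and vanishes otherwise. Feeding the basis vectors into $\pi(w-u)=\pi(T_0)^k\pi(w-u)\pi(T_{n-1})^{*k}$ and letting $k\to\infty$ kills $\pi(w-u)\xi_j$ for every $j\ne -1$, while for $j=-1$ the identity $\pi(w-u)\xi_{-1}=\pi(s_n)^k\pi(w-u)\xi_{-1}$ concentrates the vector on $\bigcap_k n^k\IZ=\{0\}$, forcing $\pi(w-u)\xi_{-1}\in\IC\xi_0$. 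Thus $\pi(w-u)$ has rank at most one. Since $(\IZ,P,\theta)$ is minimal, $\pi$ is irreducible by Proposition~\ref{prop:can rep is irred}, and $\CQ(\IZ\rtimes P)$ is simple by Example~\ref{ex:integral dynamics}(iii); an irreducible $C^*$-algebra containing a nonzero compact would contain all compacts, contradicting simplicity, so $\pi(\CQ(\IZ\rtimes P))$ contains no nonzero compact operator and the rank-one operator $\pi(w-u)$ vanishes. By faithfulness $w=u$, so $\alpha$ fixes $C^*(\IZ)$ pointwise. This is morally the purity phenomenon used in Remark~\ref{rem:T_p-1 pure means at most one extension} and Lemma~\ref{lem:Bogolubov extensions preserve C*(Z)}, and I expect the passage from the self-reproducing identity to the vanishing of $\pi(w-u)$ — in particular the no-compacts input — to be the main obstacle.

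For the explicit description, I would compute $\beta_\psi(T_i)=u^i\psi_ns_n=\psi_nT_i$, so that $\beta_\psi$ fixes $\CO_n$ pointwise exactly when $\psi_n\bigl(\sum_{i=0}^{n-1}T_iT_i^*\bigr)=\sum_{i=0}^{n-1}T_iT_i^*$, i.e. $\psi_n=1$. Once $\psi_n$ is the constant function $1$, Proposition~\ref{prop:aut from unitary functions - constant solidarity} forces every $\psi_p$ to be constant, whereupon the twisted-homomorphism relation $\psi_{pq}=\psi_p\tilde{\theta}_p(\psi_q)$ collapses to $\psi_{pq}=\psi_p\psi_q$; hence $\psi\in{\rm Hom}(P,\IT)$ and $\beta_\psi=\gamma_\chi$ for the character $\chi\in T$ extending $\psi$, with $\chi(n)=\psi_n=1$. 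The converse is immediate, since $\gamma_\chi(T_i)=\chi(n)T_i=T_i$ whenever $\chi(n)=1$. Combining this with the inclusion established above and the isomorphism ${\rm Aut}_{C^*(\IZ)}\CQ(\IZ\rtimes P)\cong{\rm Hom}_\theta(P,C(\IT,\IT))$ of Theorem~\ref{thm:aut of Q fixing C*(G)} gives the full statement.
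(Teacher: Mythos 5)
Your proof is correct and follows essentially the same route as the paper: reduce to showing $\alpha(u)=u$, verify in the canonical representation that $\pi(\alpha(u))-\pi(u)$ has rank at most one, invoke the absence of nonzero compact operators in $\pi(\CQ(\Z\rtimes P))$, and then settle the second assertion via $\psi_n=1$ together with Proposition~\ref{prop:aut from unitary functions - constant solidarity}. The only cosmetic difference is that you obtain the finite-rank estimate from the iterated identity $w-u=T_0^k(w-u)T_{n-1}^{*k}$, whereas the paper reads it off from the projections $e_{m,n^k}\in\CO_n$; both are the same computation in disguise.
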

\begin{proof}
Let $n \in P$, $2\leq n < \infty$ and $\alpha \in {\rm Aut}_{\CO_n}\CQ(\Z\rtimes P)$. We work in the canonical representation $\pi\colon \CQ(\Z\rtimes P) \to \CL(\ell^2(\Z))$ to show that $\alpha(u)=u$. Noting that $e_{m,n^k} = u^ms_{n^k}^{\phantom{*}}s_{n^k}^*u^{-m} \in \CO_n$ for $k \geq 1$ if 
% (and only if)
 $0\leq m \leq n^k-1$, we get 
\[\alpha(u) e_{m,n^k} = \alpha(u e_{m,n^k}) = u^{m+1}s_{n^k}^{\phantom{*}}s_{n^k}^*u^{-m} = u e_{m,n^k}\]
for all $0 \leq m \leq n^k-2$. Therefore, we obtain $\pi(\alpha(u))$ and $\pi(u)$ coincide outside $\ell^2(\bigcap_{k \geq 1} n^k-1+n^k\Z) = \C\xi_{-1} \subset \ell^2(\Z)$. Now $\CQ(\Z\rtimes P)$ is purely infinite and simple, so its image under $\pi$ does not contain any compact operator other than $0$. Thus faithfulness of $\pi$ allows us to conclude that $\alpha(u)=u$, so that ${\rm Aut}_{\CO_n}\CQ(\Z\rtimes P)$ is a subgroup of ${\rm Aut}_{C^*(\Z)}\CQ(\Z\rtimes P) \cong {\rm Hom}(P,C(\IT,\IT))$, where we refer to Theorem~\ref{thm:aut of Q fixing C*(G)} for the latter identification.

Now suppose $\psi \in {\rm Hom}_\theta(P,C(\IT,\IT))$ satisfies $\beta_\psi \in {\rm Aut}_{\CO_n}\CQ(\Z\rtimes P)$. Then $s_n = \beta_\psi(s_n) = \psi_n s_n$ forces $\psi_n=1$ as in the proof for injectivity in Theorem~\ref{thm:aut of Q fixing C*(G)}. But then Proposition~\ref{prop:aut from unitary functions - constant solidarity} implies $\psi_p \in \IT$ for all $p \in P$, so that $\psi \in {\rm Hom}(P,\IT)$ and hence $\beta_\psi$ is a gauge automorphism $\gamma_\chi$. 
\end{proof}

This result has an interesting immediate consequence for which we recall from Example~\ref{ex:integral dynamics} that $S\subset \N^\times$ is a family of mutually coprime numbers and $P$ is the submonoid of $\N^\times$ generated by $S$. 

\begin{corollary}\label{cor:aut fixing torsion subalgebra}
For $\emptyset \neq S' \subset S$, let $Q\doteq \langle S\setminus S'\rangle \subset \N^\times$, $R \doteq \widehat{Q^{-1}Q}$, and $\CA_{S'} \doteq C^*(\{u^ks_p \mid p \in S', 0\leq k\leq p-1\}) \subset \CQ(\Z\rtimes P)$. Then ${\rm Aut}_{\CA_{S'}}\CQ(\Z\rtimes P)$ is isomorphic to the group of gauge automorphisms of $R$ on $\CQ(\Z \rtimes Q)$. In particular, the only automorphism of $\CQ(\Z\rtimes P)$ that fixes the torsion subalgebra $\CA_S$ is the identity.
\end{corollary}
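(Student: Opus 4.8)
The plan is to derive Corollary~\ref{cor:aut fixing torsion subalgebra} from Theorem~\ref{thm:aut of Q_N fixing O_n} by iterating over the generators in $S'$ and then analyzing the resulting constraints on a $\theta$-twisted homomorphism. First I would observe that $\CA_{S'} = C^*(\{u^k s_p \mid p \in S', 0 \le k \le p-1\})$ contains each Cuntz subalgebra $\CO_p = C^*(\{u^k s_p \mid 0 \le k \le p-1\})$ for $p \in S'$ as a subalgebra. Hence any $\alpha \in {\rm Aut}_{\CA_{S'}}\CQ(\Z\rtimes P)$ fixes each such $\CO_p$ pointwise, so Theorem~\ref{thm:aut of Q_N fixing O_n} applies with $n = p$ for every $p \in S'$. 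This immediately gives $\alpha = \beta_\psi$ for some $\psi \in {\rm Hom}(P,\IT)$ (note $\psi$ is forced into the honest homomorphism group by the constant-solidarity Proposition~\ref{prop:aut from unitary functions - constant solidarity}), and moreover $\psi_p = 1$ for every $p \in S'$.

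Next I would translate these constraints into a description of the surviving group. Since $P = \langle S \rangle$ is free abelian on $S$ and $\psi \in {\rm Hom}(P,\IT)$ with $\psi_p = 1$ for all $p \in S'$, the homomorphism $\psi$ is determined freely by its values on the complementary generators $S \setminus S'$, that is, it factors through the quotient of $P$ by the subgroup generated by $S'$, which is exactly $Q = \langle S \setminus S' \rangle$. Thus $\psi$ is a character of $Q^{-1}Q$, i.e.\ an element of $R = \widehat{Q^{-1}Q}$, and conversely every such character yields an admissible $\psi$ with $\beta_\psi \in {\rm Aut}_{\CA_{S'}}\CQ(\Z\rtimes P)$ because $\psi_p = 1$ for $p \in S'$ guarantees $\beta_\psi$ fixes each generator $u^k s_p$ of $\CA_{S'}$. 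This sets up the natural identification of ${\rm Aut}_{\CA_{S'}}\CQ(\Z\rtimes P)$ with the gauge automorphisms indexed by $R$ acting on $\CQ(\Z \rtimes Q)$, where the gauge action of $R$ on $\CQ(\Z\rtimes Q)$ is precisely $\gamma_\chi(s_p) = \chi(p) s_p$ for $p \in Q$ and $\gamma_\chi(u) = u$.

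For the final assertion, taking $S' = S$ gives $Q = \langle \emptyset \rangle = \{1\}$, so $Q^{-1}Q$ is trivial and $R$ is the trivial group, whence the only admissible $\psi$ is the constant $1$ and $\beta_\psi = \id$. Since $\CA_S$ is the full torsion subalgebra, this says the only automorphism of $\CQ(\Z\rtimes P)$ fixing $\CA_S$ pointwise is the identity.

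The main obstacle I anticipate is \emph{bookkeeping rather than conceptual}: one must verify that the identification $\psi \leftrightarrow \chi \in R$ is a genuine group isomorphism and that the right-hand side is correctly interpreted as gauge automorphisms of the \emph{smaller} algebra $\CQ(\Z\rtimes Q)$ rather than of $\CQ(\Z\rtimes P)$. The subtle point is confirming that a $\theta$-twisted homomorphism of $P$ with $\psi_p=1$ for $p\in S'$ descends to (and is faithfully recovered from) a homomorphism of $Q$, which hinges on the free abelian structure of $P = \langle S\rangle$ and the fact that, by Proposition~\ref{prop:aut from unitary functions - constant solidarity}, vanishing of $\psi$ on even a single generator in $S'$ already forces $\psi$ to be constant (hence a character) on all of $P$. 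Once this descent is in place the isomorphism is immediate, and the degenerate case $S'=S$ yields the identity by a one-line specialization.
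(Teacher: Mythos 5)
Your proposal is correct and follows exactly the route the paper intends: the corollary is stated as an immediate consequence of Theorem~\ref{thm:aut of Q_N fixing O_n}, applied to each $\CO_p \subset \CA_{S'}$ for $p \in S'$, with the resulting characters $\psi \in {\rm Hom}(P,\IT)$ vanishing on $S'$ identified with $\widehat{Q^{-1}Q}$ via the free abelian structure of $P = \langle S\rangle$. The degenerate case $S'=S$ giving the triviality statement is handled just as you describe.
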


Corollary~\ref{cor:aut fixing torsion subalgebra} is remarkable as the torsion subalgebra plays a central role for the K-theory of $\CQ(\Z\rtimes P)$. We remark that in view of 
\[\CQ(\Z\rtimes P) \cong \varinjlim M_p(C^*(\Z)) \rtimes P \quad \text{whereas} \quad \CA_S \cong \varinjlim M_p(\C) \rtimes P,\] 
this rigidity is already apparent from Theorem~\ref{thm:aut of Q_N fixing O_n} because it tells us that the group $C^*$-algebra has to be fixed pointwise as well.

\begin{remark}
Thanks to Theorem~\ref{thm:aut of Q_N fixing O_n}, it is now easy to see that the group  $\Aut_{\CO_n}\CQ(\Z\rtimes P)$ is topologically isomorphic with $\IT^{\lvert S \rvert-1}$, where $\lvert S \rvert$ is the rank of $P$, see Example~\ref{ex:integral dynamics}, irrespective of what $n$ is. Here, $\IT^{\lvert S \rvert-1}$ is equipped with the product topology and $\Aut_{\CO_n}\CQ(\Z\rtimes P)$ with the topology of pointwise convergence in norm.   
\end{remark}

\subsection{Outerness}\label{subsec:outerness}
We start with a simple observation for ${\rm Aut}_{C^*(G)}\CQ(\gxp)$ as discussed in Subsection~\ref{subsec:aut fix C*(G)} in the case of $P = \N$, where every $\psi \in {\rm Hom}_\theta(P,C(\widehat{G},\IT))$ is determined by a single unitary $f \in C(\widehat{G},\IT)$. By writing $\beta_f$ for $\beta_\psi$, we obtain the following generalization of \cite{ACR}*{Proposition 6.5}:

\begin{proposition} \label{prop:outer condition for beta_F}
Let $(\gpt)$ be an algebraic dynamical system as in Example~\ref{ex:abelian group - single finite endo}. If $f \in C(\widehat{G},\IT)$ satisfies $f(1_{\widehat{G}})\neq 1$, then $\beta_f$ is an outer automorphism.
\end{proposition}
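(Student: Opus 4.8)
The plan is to argue by contradiction, showing that if $\beta_f$ were inner then $f(1_{\widehat{G}})$ would be forced to equal $1$. So suppose $\beta_f = \Ad(w)$ for some unitary $w \in \CQ(G\rtimes_\theta\N)$. Since $\beta_f$ fixes $C^*(G)$ pointwise, $w$ must commute with every element of $C^*(G)$; and because Example~\ref{ex:abelian group - single finite endo} satisfies the hypotheses of Example~\ref{ex:C*(G) masa setup} (minimality being part of the standing assumptions there), Theorem~\ref{thm:C*(G) masa} tells us that $C^*(G)$ is maximal abelian in $\CQ(G\rtimes_\theta\N)$. Hence $w \in C^*(G)$, and under the identification $C^*(G)\cong C(\widehat{G})$ the unitary $w$ is a function in $C(\widehat{G},\IT)$.

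Next I would extract a functional equation relating $f$ and $w$. Writing $s$ for the generating isometry, recall that $\beta_f(s) = fs$ and that relation (I) gives $sv = \tilde\theta(v)s$ for every $v \in C^*(G)$, where $\tilde\theta(u_g)=u_{\theta(g)}$. From $fs = \beta_f(s) = wsw^*$ I multiply on the left by $w^*$ and use $sw^* = \tilde\theta(w^*)s$ to obtain $\big(w^*f - \tilde\theta(w^*)\big)s = 0$. Setting $a \doteq w^*f - \tilde\theta(w^*) \in C^*(G)$, the remaining task is to conclude that $a = 0$.

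The one genuinely delicate point is that $s$ is a non-invertible isometry, so the identity $as = 0$ cannot be cancelled directly. I would resolve this inside the faithful canonical representation $\pi\colon \CQ(G\rtimes_\theta\N)\to\CL(\ell^2(G))$, see Example~\ref{ex:abelian group - single finite endo}(iii). There $S\doteq\pi(s)$ acts by $S\xi_h = \xi_{\theta(h)}$, so $\xi_{1_G}$ is a fixed vector of $S$ because $\theta(1_G)=1_G$. Applying the operator identity $\pi(a)S = 0$ to $\xi_{1_G}$ yields $\pi(a)\xi_{1_G} = \pi(a)S\xi_{1_G} = 0$; since $\pi$ restricts on $C^*(G)$ to the left regular representation of $G$, the vector $\xi_{1_G}$ is separating for the group von Neumann algebra $W^*(G)\supseteq\pi(C^*(G))$, so the map $b\mapsto\pi(b)\xi_{1_G}$ is injective on $C^*(G)$, and therefore $a = 0$. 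This gives $f = w\,\tilde\theta(w)^{-1}$.

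Finally I would read off the contradiction by passing to functions on $\widehat{G}$. Under $C^*(G)\cong C(\widehat{G})$ one has $\tilde\theta(w) = w\circ\hat\theta$, so the identity $f = w\,\tilde\theta(w)^{-1}$ becomes $f(\chi) = w(\chi)\,\overline{w(\hat\theta\chi)}$ for all $\chi\in\widehat{G}$. Evaluating at $\chi = 1_{\widehat{G}}$ and using that the dual endomorphism $\hat\theta$ fixes the trivial character, $\hat\theta(1_{\widehat{G}}) = 1_{\widehat{G}}$, I obtain $f(1_{\widehat{G}}) = w(1_{\widehat{G}})\,\overline{w(1_{\widehat{G}})} = 1$, contradicting the hypothesis $f(1_{\widehat{G}})\neq 1$. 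Hence $\beta_f$ admits no inner implementation and is therefore outer.
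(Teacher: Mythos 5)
Your proposal is correct and follows essentially the same route as the paper: maximality of $C^*(G)$ forces the implementing unitary into $C^*(G)\cong C(\widehat{G},\IT)$, the covariance relation (I) yields the functional equation $f = w\cdot\overline{w\circ\hat\theta}$, and evaluation at the fixed point $1_{\widehat{G}}$ of $\hat\theta$ gives $f(1_{\widehat{G}})=1$. The only (cosmetic) difference is that the paper carries out the computation in the Fourier-transformed representation on $L^2(\widehat{G},\mu)$ and evaluates pointwise there, whereas you stay in $\ell^2(G)$ and justify cancelling the isometry $s$ via the fixed vector $\xi_{1_G}$ and the separating property for $W^*(G)$ --- a legitimate and slightly more careful handling of that step.
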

\begin{proof}
In the following computations, we work in the representation obtained out of the canonical representation via Fourier transform. This is the representation on $L^2(\widehat{G},\mu)$ given by $\pi(s)\xi(\chi)\doteq \xi(\hat{\theta}(\chi))$ for $\chi\in\widehat{G}$, where $\hat{\theta}$ is the surjective group endomorphism of $\widehat{G}$ corresponding $\theta$ via Pontryagin duality, $\pi(u_g)\xi(\chi)\doteq \chi(u_g) \xi(\chi)$, $\xi\in L^2(\widehat{G},\mu)$.

Suppose that $\beta_F = {\rm Ad}(V)$ for some $V\in \CU(\CQ(G\rtimes \IN))$. Since $C^*(G)$ is a maximal abelian subalgebra of $\CQ(G\rtimes \IN)$, the unitary $V$ is in $C^*(G)\cong C(\widehat{G},\IT)$, which means that $V=M_g$ with $g\in C(\widehat{G},\IT)$. In our representation the equality $f(u)s=VsV^*$ reads as
\begin{align*}
f(\chi) \xi(\hat{\theta} (\chi)) & =f(\chi)(s\xi)(\chi)=(M_f s)(\xi)(\chi)=(M_gsM_{\overline{g}})(\xi)(\chi)=g(\chi)(sM_{\overline{g}})(\xi)(\chi)\\
& =g(\chi)(M_{\overline{g}})(\xi)(\hat{\theta} (\chi))=g(\chi)\overline{g(\hat{\theta} (\chi))}\xi(\hat{\theta} (\chi))
\end{align*}
In particular, if we choose $\chi = \xi = 1_{\widehat{G}}$ we get
\[
f(1_{\widehat{G}})=g(1_{\widehat{G}})\overline{g(\hat{\theta} (1_{\widehat{G}}))}=g(1_{\widehat{G}})\overline{g}(1_{\widehat{G}})=1.
\]
\end{proof}

The next result is a generalization of \cite{ACR}*{Theorem~5.1} for $\CQ_2$, but the proof we give here is quite different. Recall that $N_p = [G:\theta_p(G)]$, and if $P$ is an abelian right LCM semigroup, we denote by $T$ the Pontryagin dual of its Grothendieck group $P^{-1}P$ and by $\gamma\colon T \curvearrowright \CQ(\gxp)$ the gauge action given by $\gamma_\chi(u_gs_p) = \chi(p)u_gs_p$ for $\chi \in T, g \in G, p\in P$. For $(\gpt)$ with abelian $G$, we denote by $\widehat{\theta_p}$ the surjective group endomorphism of $\widehat{G}$ corresponding to $\theta_p$ for $p \in P$.

\begin{thm}\label{thm:gaugeQpouter}
Suppose that $(\gpt)$ is an algebraic dynamical system with $G$ and $P$ abelian, and $2 \leq [G:\theta_p(G)] < \infty$ for some $p \in P$. If $C^*(G) \subset \CQ(\gxp)$ is maximal abelian and the only $\chi \in \widehat{G}$ for which the system of functional equations 
\begin{equation}\label{eq:funct eq - outer gauge action}
\chi(p)f = f \circ \widehat{\theta_p} \quad \text{for all } p \in P \text{ with } N_p<\infty
\end{equation}
admits a solution $f$ in $C(\widehat{G},\IT)$ is $\chi=1$, then $\gamma\colon T \curvearrowright \CQ(\gxp)$ is an outer action.
%For all $2 \leq p < \infty$, every non-trivial gauge automorphisms on $\CQ_p$ is outer.
\end{thm}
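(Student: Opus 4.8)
The plan is to argue by contraposition: assuming $\gamma$ fails to be outer, I will manufacture a nontrivial solution of the system \eqref{eq:funct eq - outer gauge action} and thereby contradict the hypothesis. So suppose $\gamma_{\chi_0} = \Ad(V)$ for some unitary $V \in \CU(\CQ(\gxp))$ and some $\chi_0 \in T$ with $\chi_0 \neq 1$. Since $\chi_0$ is a character of $P^{-1}P$ we have $\chi_0(1_P) = 1$, whence $\gamma_{\chi_0}(u_g) = u_g$ for all $g \in G$; that is, $\gamma_{\chi_0}$ fixes $C^*(G)$ pointwise. Consequently $V$ commutes with every $u_g$, so $V \in C^*(G)' \cap \CQ(\gxp)$, and maximality of $C^*(G)$ forces $V \in C^*(G)$. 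Identifying $C^*(G) \cong C(\widehat{G})$, I may write $V = f$ for a unitary $f \in C(\widehat{G},\IT)$.

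Next I would convert the relation $\Ad(V)(s_p) = \gamma_{\chi_0}(s_p)$ into the functional equation. The identity $V s_p V^* = \chi_0(p) s_p$ is equivalent to $V s_p = \chi_0(p) s_p V$. Using relation (I) in the form $s_p f = (f \circ \widehat{\theta_p})\,s_p$ (which comes from $s_p u_g = u_{\theta_p(g)} s_p$, the map $\tilde{\theta}_p(u_g) = u_{\theta_p(g)}$ dualizing to $f \mapsto f\circ\widehat{\theta_p}$ under $C^*(G) \cong C(\widehat{G})$), the relation rewrites as
\[ \bigl( f - \chi_0(p)\,(f\circ\widehat{\theta_p}) \bigr)\, s_p = 0. \]
Writing $X_p \doteq f - \chi_0(p)(f\circ\widehat{\theta_p}) \in C^*(G)$, the central step is to cancel $s_p$ and deduce $X_p = 0$ whenever $N_p < \infty$. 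For this I would right-multiply $X_p s_p = 0$ by $s_p^*$ to obtain $X_p e_{1_G,p} = 0$; conjugating by $u_g$ and using that $X_p$ commutes with $C^*(G)$ gives $X_p e_{g,p} = 0$ for every $g \in G$; summing over $\overline{g} \in G/\theta_p(G)$ and invoking the summation relation (III), which is available precisely because $N_p < \infty$, yields $X_p = X_p \cdot 1 = 0$. Hence $\overline{\chi_0(p)}\, f = f \circ \widehat{\theta_p}$ for all $p \in P$ with $N_p < \infty$.

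Finally, setting $\chi \doteq \chi_0^{-1}$ exhibits $f$ as a solution of \eqref{eq:funct eq - outer gauge action} for this $\chi$. As $\chi_0 \neq 1$, the character $\chi$ is nontrivial, contradicting the hypothesis that only the trivial character admits a solution; therefore no such $V$ exists and $\gamma$ is outer. I expect the cancellation step, passing from $X_p s_p = 0$ to $X_p = 0$, to be the crux of the argument: it is exactly where finiteness of the index $N_p$ and the summation relation (III) are indispensable, and it is the only point where one genuinely leaves the commutative subalgebra $C^*(G)$. A secondary point that I would want to verify carefully is that a nontrivial $\chi_0$ indeed descends to a nontrivial $\chi$ on the part of $P$ where $N_p < \infty$, so that the contradiction with \eqref{eq:funct eq - outer gauge action} truly bites; under the present finite-type-type hypotheses this should pose no difficulty, but it is the natural place where the standing assumptions are used.
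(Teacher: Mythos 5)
Your proposal is correct and follows essentially the same route as the paper: maximal abelianness places the implementing unitary in $C^*(G)\cong C(\widehat{G},\IT)$, relation (I) converts $\Ad(V)(s_p)=\gamma_{\chi_0}(s_p)$ into $\bigl(f-\chi_0(p)(f\circ\widehat{\theta_p})\bigr)s_p=0$, and the summation relation (III) cancels $s_p$ to produce a nontrivial solution of \eqref{eq:funct eq - outer gauge action}. The only difference is cosmetic — you spell out the cancellation step (right-multiply by $s_p^*$, conjugate by $u_g$, sum over $G/\theta_p(G)$) that the paper compresses into one sentence, and you work with $\Ad(V)$ rather than $\Ad(w^*)$, whence the harmless passage to $\chi=\chi_0^{-1}$.
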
  
\begin{proof}
Let $\chi \in \widehat{G}$ such that $\gamma_\chi = {\rm Ad}(w^*)$ for some $w \in \CU(\CQ(\gxp))$. As $u_g = \gamma_\chi(u_g) = w^*u_gw$ for all $g \in G$, we get $w \in \CU(C^*(G)) \cong C(\widehat{G},\IT)$ because $C^*(G)$ is maximal abelian in $\CQ(\gxp)$. If $f \in C(\widehat{G},\IT)$ corresponds to $w$, then relation (I) yields 
\[\chi(p)s_p = \gamma_\chi(s_p) = \overline{f} s_p f = \overline{f}(f \circ \widehat{\theta_p})s_p \quad \text{for all } p \in P.\]
Multiplying the above equations by $f$ on the left and using (III) for $p \in P$ with $N_p < \infty$, we arrive at \eqref{eq:funct eq - outer gauge action}. By assumption, this forces $\chi=1$.
\end{proof}

\begin{corollary}\label{cor:gaugeQouter integral dynamics}
If $(\Z,P,\theta)$ is an integral dynamics as described in Example~\ref{ex:integral dynamics}, then $\gamma\colon \IT^{|S|} \curvearrowright \CQ(\Z \rtimes P)$ is outer.
\end{corollary}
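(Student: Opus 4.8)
The plan is to deduce this as a direct application of Theorem~\ref{thm:gaugeQpouter} to the integral dynamics $(\Z,P,\theta)$. First I would check the standing hypotheses: here $G=\Z$ and $P\subset\N^\times$ are both abelian, and for every $p\in P$ with $p\neq 1$ we have $N_p=[\Z:p\Z]=p$, which is finite and at least $2$ whenever $p\in S$; in particular the index condition $2\le N_p<\infty$ is met for some $p\in P$. Moreover $(\Z,P,\theta)$ satisfies all the requirements collected in Example~\ref{ex:C*(G) masa setup} and is minimal, so Theorem~\ref{thm:C*(G) masa} guarantees that $C^*(\Z)$ is maximal abelian in $\CQ(\Z\rtimes P)$. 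Thus the only thing left to verify is that the sole $\chi$ for which the functional equation system \eqref{eq:funct eq - outer gauge action} admits a solution $f\in C(\IT,\IT)$ is the trivial one.

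For this I would unwind the dual dynamics. Identifying $\widehat{\Z}=\IT$, the endomorphism $\theta_p(n)=pn$ of $\Z$ dualizes to $\widehat{\theta_p}\colon\IT\to\IT,\ z\mapsto z^p$. Since $N_p<\infty$ for every $p\in P$ with $p\neq 1$, the system \eqref{eq:funct eq - outer gauge action} reads $\chi(p)f(z)=f(z^p)$ for all $z\in\IT$ and all such $p$. The key observation is that $z=1$ is a common fixed point of every $\widehat{\theta_p}$, because $1^p=1$. Evaluating the equation at $z=1$ therefore gives $\chi(p)f(1)=f(1)$, and as $|f(1)|=1\neq 0$ we conclude $\chi(p)=1$ for every generator $p\in S$. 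Since such a character $\chi\in T=\widehat{P^{-1}P}$ is completely determined by its values on the generating set $S$ of $P$, this forces $\chi=1_T$. Hence the hypothesis of Theorem~\ref{thm:gaugeQpouter} holds and $\gamma\colon\IT^{|S|}\curvearrowright\CQ(\Z\rtimes P)$ is outer.

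I do not anticipate a genuine obstacle here: the statement is essentially a specialization of Theorem~\ref{thm:gaugeQpouter}, and the only points requiring care are the correct identification of $\widehat{\theta_p}$ with the $p$-th power map and the remark that the trivial character $1\in\widehat{\Z}$ is fixed by all $\widehat{\theta_p}$, which lets the evaluation trick collapse the functional equation to $\chi(p)=1$. Alternatively, one could compare winding numbers: $z\mapsto f(z^p)$ has degree $p\deg f$ while $\chi(p)f$ has degree $\deg f$, forcing $(p-1)\deg f=0$ and hence $\deg f=0$; but the fixed-point evaluation is the more economical route and avoids any discussion of continuous lifts.
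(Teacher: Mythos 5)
Your proof is correct and follows essentially the same route as the paper: verify the hypotheses of Theorem~\ref{thm:gaugeQpouter} (finite type, abelian $G$ and $P$, maximality of $C^*(\Z)$ via Theorem~\ref{thm:C*(G) masa}) and then show the functional equation $\chi(p)f(z)=f(z^p)$ forces $\chi=1$. The paper simply cites this last step as ``well known,'' whereas your evaluation at the common fixed point $z=1$ supplies a clean explicit justification; this is a welcome detail, not a deviation.
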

\begin{proof}
Note that $G$ and $P$ are abelian, the system is of finite type, and $C^*(\Z)\subset \CQ(\Z\rtimes P)$ is maximal abelian due to Theorem~\ref{thm:C*(G) masa}, see the discussion following Example~\ref{ex:C*(G) masa setup}. For $w \in T=\IT^{|S|}$, \eqref{eq:funct eq - outer gauge action} gives $w_p f(z) = f(z^p)$ for all $z \in \IT$ and all $p \in S$. But it is well known that each of these individual equations only has a solution for $w_p=1$, so that $w=1$. Now the result follows from Theorem~\ref{thm:gaugeQpouter}.
\end{proof}

In fact, Corollary~\ref{cor:gaugeQouter integral dynamics} can be easily deduced from the following result that competes with Theorem~\ref{thm:gaugeQpouter}:

\begin{corollary}\label{cor:gaugeouter}
If $(\gpt)$ is an algebraic dynamical system for which $P$ is abelian, $P^*$ is finite, and $\bigcap_{p \in P}\theta_p(G) = \{1_G\}$, then $\gamma\colon T \curvearrowright \CQ(\gxp)$ is an outer action.
\end{corollary}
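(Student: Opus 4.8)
The plan is to deduce this immediately from Lemma~\ref{lem:unitaries in rel commutant}, whose standing hypotheses coincide verbatim with those of the corollary: $P$ abelian, $P^*$ finite, and $\bigcap_{p \in P}\theta_p(G) = \{1_G\}$. Recall that $\gamma$ being an \emph{outer} action means precisely that the homomorphism $T \to \Out\CQ(\gxp)$ induced by $\gamma$ is injective, i.e. the only $\chi \in T$ for which $\gamma_\chi$ is an inner automorphism is the trivial character $\chi = 1_T$. So I would fix $\chi \in T$, assume $\gamma_\chi = \Ad(w)$ for some unitary $w \in \CU(\CQ(\gxp))$, and aim to force $\chi = 1_T$.

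First I would evaluate $\gamma_\chi$ on the generating isometries. Since $\gamma_\chi(s_p) = \chi(p)\, s_p$ by definition of the gauge action, the assumption $\gamma_\chi = \Ad(w)$ gives $w s_p w^* = \chi(p)\, s_p$ for all $p \in P$. Putting $z_p \doteq \chi(p) \in \IT$, this is exactly the configuration handled by Lemma~\ref{lem:unitaries in rel commutant}. Invoking finiteness of $P^*$, that lemma yields $z_p = 1$, that is $\chi(p) = 1$, for every $p \in P$.

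Finally, I would upgrade triviality of $\chi$ on $P$ to triviality on the whole Grothendieck group. Every element of $P^{-1}P$ has the form $p^{-1}q$ with $p,q \in P$, so $\chi(p^{-1}q) = \overline{\chi(p)}\,\chi(q) = 1$. Hence $\chi$ is the trivial character of $P^{-1}P$, i.e. $\chi = 1_T$, and $\gamma$ is outer.

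I do not expect a genuine obstacle here: all the substantive work has already been carried out in Lemma~\ref{lem:unitaries in rel commutant}, which in turn rests on the Fourier-coefficient technique of Lemma~\ref{lem:Cuntzlemma} and the limit computation of Lemma~\ref{lem:limfixpoint}. Once that machinery is in place, the corollary is essentially free. The only point worth a moment's care is that one may apply the lemma to the unitary implementing $\gamma_\chi$ irrespective of the $\Ad$-convention (whether one writes $\gamma_\chi = \Ad(w)$ or $\gamma_\chi = \Ad(w^*)$), since in either case a unitary satisfying a relation of the form $v s_p v^* = z_p s_p$ is produced, which is all the lemma requires.
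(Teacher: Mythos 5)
Your proposal is correct and follows exactly the paper's own argument: assume $\gamma_\chi = \Ad(w)$, observe $ws_pw^* = \chi(p)s_p$, and apply Lemma~\ref{lem:unitaries in rel commutant} to conclude $\chi(p)=1$ for all $p\in P$, hence $\chi = 1_T$. The only difference is that you spell out the (trivial) extension of triviality from $P$ to the Grothendieck group $P^{-1}P$, which the paper leaves implicit.
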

\begin{proof}
Suppose $\gamma_\chi$ is inner, that is, $\gamma_\chi=\Ad{w}$, for some unitary $w\in\CQ(\gxp)$. Then Lemma \ref{lem:unitaries in rel commutant} applies as $ws_pw^* = \gamma_\chi(s_p) = \chi(p)s_p$ with $\chi(p) \in \IT$, and implies $\chi=1$.
\end{proof}

Our choice to include both Theorem~\ref{thm:gaugeQpouter} and Corollary~\ref{cor:gaugeouter} is based on the difference in their approaches: While Theorem~\ref{thm:gaugeQpouter} makes seemingly restrictive assumptions like $C^*(G) \subset \CQ(\gxp)$ being maximal abelian and the non-existence of non-trivial solutions to \eqref{eq:funct eq - outer gauge action}, it does not require $\bigcap_{p \in P}\theta_p(G) = \{1_G\}$, even though our Theorem~\ref{thm:C*(G) masa} needs this assumption. On the other hand, Corollary~\ref{cor:gaugeouter} only asks for $P$ to be abelian, finiteness of $P^*$, and $\bigcap_{p \in P}\theta_p(G) = \{1_G\}$. So unless the assumptions in Theorem~\ref{thm:gaugeQpouter} force $\bigcap_{p \in P}\theta_p(G) = \{1_G\}$ and $P^*$ to be finite, the two results remain independent.\vspace*{3mm}

We stay with integral dynamics as described in Example~\ref{ex:integral dynamics} and prove outerness of all automorphisms of $\alpha \in {\rm Aut}\CQ(\Z\rtimes P)$ that invert the unitary $u$, that is, $\alpha(u) = u^*$, see Theorem~\ref{thm:aut of Q_N inverting u are outer}. The two particular examples we have in mind as motivation are
\begin{enumerate}[(a)]
\item $\lambda_{-1}$ given by $u \mapsto u^*, s_p \mapsto s_p$; and
\item $\phi$ given by $u \mapsto u^*, s_p \mapsto u^{p-1}s_p$ for $p \in P$.
\end{enumerate}
In order to check that $\phi$ is a morphism of $\CQ_\IN$ we only need to check that $\phi(s_p)\phi(s_q)=\phi(s_{pq})$ (II) holds, which follows from 
\[\phi(s_p)\phi(s_q) = u^{p-1}s_pu^{q-1}s_q = u^{p-1+p(q-1)}s_{pq}= \phi(s_{pq}).\] 
The reason is that the relations (I) and (II) for $\CQ(\Z\rtimes P)$ are satisfied by $\phi(u)$ and $\phi(s_p)$ for all $p \in P$ because the restriction $\phi |_{\CQ_p}$ coincides with the unique extension of the exchange automorphism on $\CQ_p \subset \CQ(\Z\rtimes P)$, see Definition~\ref{def:exchange automorphism} and Theorem~\ref{thm:extensible Bogolubov autos}. In fact, $\phi$ is determined completely by the collection of all these exchange automorphisms. We note that $\phi$ and $\lambda_{-1}$ are unitarily equivalent as $\phi =\Ad(u^*)\circ\lambda_{-1}$.

\begin{thm}\label{thm:aut of Q_N inverting u are outer}
Every automorphism $\alpha$ of $\CQ(\Z\rtimes P)$ with the property $\alpha(u)=u^*$ is outer. In particular, $\lambda_{-1}$ and $\phi$ are outer.
\end{thm}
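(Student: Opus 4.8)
The plan is to argue by contradiction inside the faithful canonical representation $\pi\colon \CQ(\Z\rtimes P)\to \CL(\ell^2(\Z))$ (Example~\ref{ex:integral dynamics}~(iv)), where $\pi(u)=U$ is the bilateral shift $\xi_n\mapsto\xi_{n+1}$ and $\pi(s_p)=S_p$ acts by $\xi_n\mapsto\xi_{pn}$. Suppose $\alpha=\Ad(w)$ for some unitary $w\in\CQ(\Z\rtimes P)$, put $W\doteq\pi(w)$, so that the hypothesis $\alpha(u)=u^*$ reads $WUW^*=U^{-1}$. Let $J\in\CL(\ell^2(\Z))$ be the flip $J\xi_n=\xi_{-n}$, a self-adjoint unitary satisfying $JUJ=U^{-1}$. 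The first step is to observe that $JW$ then commutes with $U$. Since $\xi_0$ is cyclic for $U$, the bilateral shift generates a maximal abelian subalgebra, namely the group von Neumann algebra $W^*(\Z)\cong L^\infty(\IT)$ of Laurent operators $m(U)$. Hence $JW=m(U)$ for some $m\in L^\infty(\IT)$ with $|m|=1$ a.e., that is, $W=J\,m(U)$.

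The second ingredient is that $\pi(\CQ(\Z\rtimes P))$ is contained in the $C^*$-algebra $\CT\doteq\{X\in\CL(\ell^2(\Z))\mid [X,E_+]\in\CK(\ell^2(\Z))\}$ of operators essentially commuting with the projection $E_+$ onto $\ell^2(\Z_{\geq 0})$. I would verify this on the generators: $[U,E_+]$ is a rank-one operator, while each $S_p$ commutes with $E_+$ exactly, since $S_p$ maps $\ell^2(\Z_{\geq 0})$ and $\ell^2(\Z_{<0})$ into themselves ($pn\geq 0$ for $n\geq 0$, and $pn<0$ for $n<0$). As $\CT$ is norm-closed, $*$-closed, and multiplicatively closed, it contains $\pi(\CQ(\Z\rtimes P))$.

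Finally I would derive the contradiction by showing $W=J\,m(U)$ never lies in $\CT$. Using $JE_+J=E_{\leq 0}$, which differs from $E_-=1-E_+$ only by the rank-one projection onto $\C\xi_0$, a short computation yields
\[ [W,E_+]\equiv J\bigl(E_+\,m(U)\,E_+-E_-\,m(U)\,E_-\bigr)\pmod{\text{finite rank}}. \]
Now $E_+\,m(U)\,E_+$ is precisely the Toeplitz operator with symbol $m$: its matrix in the basis $(\xi_n)_{n\geq 0}$ is constant along diagonals, with entries the Fourier coefficients $\widehat{m}(j-k)$. Since a compact operator must have the entries along each fixed diagonal tend to $0$, compactness of $E_+\,m(U)\,E_+$ would force $\widehat{m}(k)=0$ for all $k$, hence $m=0$, contradicting $|m|=1$. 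Thus $E_+\,m(U)\,E_+$ is non-compact, and compressing to $E_+$ shows that $E_+\,m(U)\,E_+-E_-\,m(U)\,E_-$, and therefore $[W,E_+]$, is non-compact, contradicting $W\in\pi(\CQ(\Z\rtimes P))\subseteq\CT$. This proves $\alpha$ is outer; in particular $\lambda_{-1}$ and $\phi$, both of which send $u$ to $u^*$, are outer.

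The step I expect to require the most care is the last one. The subtlety is that the symbol $m$ is only an $L^\infty$ function, so one cannot appeal to compactness of the associated Hankel operator (which would need $m$ essentially continuous), and one must instead find a decomposition in which the non-compactness of $[W,E_+]$ is visible uniformly in $m$. Isolating the Toeplitz block $E_+\,m(U)\,E_+$, together with the elementary ``constant diagonals'' obstruction to compactness, is exactly what makes the argument work for every unimodular symbol at once, and is where I would concentrate the bookkeeping.
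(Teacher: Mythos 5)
Your proof is correct, and its second half takes a genuinely different route from the paper's. Both arguments begin identically: introduce the flip $J\xi_n=\xi_{-n}$ (the paper's $\IP$), observe that $J\pi(w)$ commutes with the bilateral shift $\pi(u)$, and use maximal abelianness of $W^*(\Z)\cong L^\infty(\IT)$ to write $\pi(w)=J\,m(U)$ with $m$ unimodular in $L^\infty$. The paper then proves directly that $\IP\notin\CQ(\Z\rtimes P)$ via a norm estimate $\|(\IP-\sum c_iu^{m_i}s_{p_i}s_{q_i}^*u^{n_i})\xi_n\|\geq 1$ for large $n$, which settles the case of a \emph{continuous} symbol immediately but forces the authors to defer the general $L^\infty$ case to the technical estimates of \cite{ACR}*{Theorem~5.9}. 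You instead exhibit a representation-theoretic obstruction that is insensitive to the regularity of $m$: every generator of $\pi(\CQ(\Z\rtimes P))$ essentially commutes with $E_+$ (indeed each $S_p$ commutes exactly, consistent with Proposition~\ref{prop:decomp of rep for A_S induced from can rep of Q}, and $[U,E_+]$ is rank one), so the whole algebra lies in the essential commutant of $E_+$, whereas $[J\,m(U),E_+]$ is, modulo finite rank, $J$ times the block-diagonal operator $E_+m(U)E_+-E_-m(U)E_-$, whose Toeplitz block is non-compact for every nonzero symbol by the constant-diagonals argument. This buys a self-contained proof that treats all unimodular $L^\infty$ symbols uniformly — precisely the point at which the paper's argument becomes ``more technical'' — at the modest cost of introducing the auxiliary algebra $\CT$. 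Your verification of the generators, the identity $J E_+J=E_{\leq 0}=E_-+P_0$, and the reduction to the Toeplitz compression are all correct as stated.
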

\begin{proof}
The proof is an adaptation of the proof of \cite{ACR}*{Theorem~5.9}, so we rather explain the strategy and point out modifications than go through the proof in full detail: The symmetry $\IP \in \CL(\ell^2(\Z))$ given by $\IP\xi_n \doteq \xi_{-n}$ for $n \in \Z$ satisfies $\IP\pi(u)\IP = \pi(u^*)$, see \cite{ACR}*{Remark~5.5}. Then \cite{ACR}*{Theorem~5.8} shows that $\IP \notin \CQ_2$, which we shall explain here for $\CQ(\Z\rtimes P)$: The $C^*$-algebra $\CQ(\Z\rtimes P)$ is the closure of the linear span of operators of the form $u^m s_p^{\phantom{*}}s_q^*u^{*n}$ with $p,q \in P, m,n \in \Z$. So let $\sum_{1 \leq i \leq j} c_i u^{m_i}s_{p_i}^{\phantom{*}}s_{q_i}^*u^{n_i}$ with $c_i \in \C, m_i,n_i \in \Z, p_i,q_i \in P$ for $i=1,\ldots,j$ for some $j \in \N$. For every $n \geq \max_{1 \leq i\leq j} \lvert m_i\rvert \vee \lvert n_i\rvert$ we then get
\[\begin{array}{c}
\bigl\| \bigl(\IP-\sum\limits_{1 \leq i \leq j} c_i u^{m_i}s_{p_i}^{\phantom{*}}s_{q_i}^*u^{n_i}\bigr) \xi_n\bigr\| = \bigl\|\xi_{-n} - \sum\limits_{1 \leq i \leq j} c_i u^{m_i}s_{p_i}^{\phantom{*}}s_{q_i}^*u^{n_i}(\xi_n)\bigr\| \geq 1
\end{array}
\]
as 
\[ u^{m_i}s_{p_i}^{\phantom{*}}s_{q_i}^*u^{n_i}(\xi_n) =
\begin{cases}
\xi_{m_i+\frac{p_i}{q_i}(n_i+n)} &, \text{if } n_i+n \in q_i\Z,\\
0 &, \text{otherwise.}
\end{cases}\]
The reason is that the choice of $n$ and $P \subset \N^\times$ force $\frac{p_i}{q_i}(n_i+n)$ to have the same sign as $n$. Thus, again by the choice of $n$, we have $m_i+\frac{p_i}{q_i}(n_i+n) \neq -n$ for all $i$, and therefore $\IP \notin \CQ(\Z\rtimes P)$. 

Now suppose $\alpha \in \Aut \CQ(\Z\rtimes P)$ with $\alpha(u)=u^*$ is inner, that is, there is $w \in \CU(\CQ(\Z\rtimes P))$ such that $\alpha = {\rm Ad}(w)$. Then we get $\IP\pi(wuw^*)\IP = \pi(u)$, so that the unitary $\IP\pi(w)$ commutes with $\pi(u)$. Thus we deduce $\IP\pi(w) \in \CU(\pi(C^*(\Z))') \cong \CU(L^\infty(\IT,\mu)) = L^\infty((\IT,\mu),\IT)$, where $\mu$ denotes the Haar measure (or Lebesque measure) on $\IT$. Since $\IP$ is a symmetry, we arrive at $\pi(w) \in \IP \cdot L^\infty((\IT,\mu),\IT)$. If we could take $f \in C(\IT,\IT) \subset L^\infty((\IT,\mu),\IT)$ with $\pi(w) = \IP \pi(f(u))$ instead of an essentially bounded function, the contradiction would follow readily as $\IP = \IP \pi(f(u)\overline{f}(u)) = \pi(w\overline{f}(u)) \in \CQ(\Z\rtimes P)$ contradicts $\IP \notin \CQ(\Z\rtimes P)$. However, the general case is more technical, and we refer to the proof of \cite{ACR}*{Theorem~5.9} for details, remarking only some constants within the estimations will change.
\end{proof}

\begin{remark}\label{rem:outerness for inverting u beyond}
It is crucial to Theorem~\ref{thm:aut of Q_N inverting u are outer} that there is no $p \in \N^\times$ such that the monoid $P\subset \Z^\times$ contains both $p$ and $-p$ because then $\IP = \sum_{0 \leq m \leq p-1}\pi(u^ms_{-p}^{\phantom{*}}s_p^*u^{-m})$ belongs to $\pi(\CQ(\Z\rtimes P)) \cong \CQ(\Z\rtimes P)$, and the proof of Theorem~\ref{thm:aut of Q_N inverting u are outer} breaks down. In fact, we have $\pi \circ \lambda_{-1} = {\rm Ad}(\IP) \circ \pi$, and thus both $\lambda_{-1}$ and also $\phi = {\rm Ad}(u^*)\circ \lambda_{-1}$ are inner. One may therefore ask for new examples of outer automorphisms of $\CQ(\Z \rtimes P)$, especially with an eye on the case where $P \subset \Z^\times$ contains $-1$. 
\end{remark}

\begin{corollary}\label{cor:extensible Bogolubov autom of O_p give outer action}
For every $2\leq p < \infty$, the group of extendible Bogolubov automorphisms of $\CO_p$ defines an outer action of $\IT \times \Z/2\Z$ on $\CQ_p$.
\end{corollary}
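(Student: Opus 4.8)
The plan is to read off both the group and the action directly from Theorem~\ref{thm:extensible Bogolubov autos}, and then to settle outerness by a two-case split, each case being covered by an outerness result already at our disposal. By Theorem~\ref{thm:extensible Bogolubov autos}, the extensible Bogolubov automorphisms of $\CO_p$ are exactly the $\lambda_A$ with $A$ in the subgroup $\{ zB \mid z \in \IT,\ B \in \{1,E\}\} \cong \IT \times \Z/2\Z$ of $\CU_p(\C)$, and each such $\lambda_A$ has a unique extension $\widetilde{\lambda_A} \in \Aut \CQ_p$ by Lemma~\ref{lem:extensibility of Bogolubov automorphisms}. Since $\lambda_A \circ \lambda_{A'} = \lambda_{AA'}$ and extensions are unique, $\widetilde{\lambda_A} \circ \widetilde{\lambda_{A'}}$ extends $\lambda_{AA'}$ and hence coincides with $\widetilde{\lambda_{AA'}}$; thus $A \mapsto \widetilde{\lambda_A}$ is a (faithful) group homomorphism $\IT \times \Z/2\Z \to \Aut \CQ_p$, which is the asserted action. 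To prove the action is outer I must show that $\widetilde{\lambda_{zB}}$ is an outer automorphism for every $(z,B) \neq (1,1)$.

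First I would dispose of the elements with trivial $\Z/2\Z$-component, namely $B = 1$ and $z \neq 1$. By Theorem~\ref{thm:extensible Bogolubov autos} these are precisely the nontrivial gauge automorphisms $\gamma_z$, as $\widetilde{\lambda_{z1}}(u) = u$. Regarding $\CQ_p = \CQ(\Z \rtimes \langle p \rangle)$ as the integral dynamics of Example~\ref{ex:integral dynamics} with $S = \{p\}$, the gauge group is $\IT^{|S|} = \IT$, and Corollary~\ref{cor:gaugeQouter integral dynamics} shows that $\gamma_z$ is outer for $z \neq 1$.

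Finally I would handle the elements with nontrivial $\Z/2\Z$-component, that is $B = E$ and $z \in \IT$ arbitrary. For these, Theorem~\ref{thm:extensible Bogolubov autos} gives $\widetilde{\lambda_{zE}}(u) = u^*$, so every such extension inverts the generating unitary $u$. Because $P = \langle p \rangle \subset \N^\times$, Theorem~\ref{thm:aut of Q_N inverting u are outer} applies directly and shows that any automorphism of $\CQ_p$ inverting $u$ is outer; in particular $\widetilde{\lambda_{zE}}$ is outer. Since these two cases exhaust all $(z,B) \neq (1,1)$, the induced homomorphism $\IT \times \Z/2\Z \to \Out \CQ_p$ is injective, which is precisely the statement that the action is outer. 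I do not expect a genuine obstacle here: the one point deserving care is the bookkeeping observation that every extension which is not a gauge automorphism sends $u$ to $u^*$ and therefore falls squarely within the scope of Theorem~\ref{thm:aut of Q_N inverting u are outer}; granting this, the corollary is an immediate consequence of the two cited outerness theorems.
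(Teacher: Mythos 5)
Your proposal is correct and follows exactly the paper's own argument: the paper's proof likewise combines Theorem~\ref{thm:extensible Bogolubov autos} with Corollary~\ref{cor:gaugeQouter integral dynamics} for the gauge case and Theorem~\ref{thm:aut of Q_N inverting u are outer} for the $u \mapsto u^*$ case. Your additional bookkeeping (uniqueness of extensions giving a genuine group homomorphism) is a worthwhile explicit check that the paper leaves implicit.
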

\begin{proof}
This follows immediately from combining Theorem~\ref{thm:extensible Bogolubov autos} with Corollary~\ref{cor:gaugeQouter integral dynamics} and Theorem~\ref{thm:aut of Q_N inverting u are outer}.
\end{proof}

\section*{References}
\begin{biblist}
\bibselect{bib}
\end{biblist}

\end{document}